\documentclass[preprint,sort&compress,3p,12pt]{elsarticle}
\usepackage{amsmath,amssymb,amsthm}

\newcommand\be{\begin{equation}}
\newcommand\ee{\end{equation}}
\DeclareMathOperator{\im}{Im }
\DeclareMathOperator{\rea}{Re }
\DeclareMathOperator{\Diag}{Diag }

\numberwithin{equation}{section}

\newtheorem{thm}{Theorem}[section]
\newtheorem{lem}[thm]{Lemma}
\newtheorem{cor}[thm]{Corollary}

\newtheorem{rem}[thm]{Remark}
\newtheorem{example}{Example}

\DeclareMathOperator{\spa}{span}

\allowdisplaybreaks[3]
\topmargin -80pt
\textheight 25 true cm

\begin{document}

\begin{frontmatter}

\title{A Degenerate Hopf Bifurcation Theorem in Infinite Dimensions}

\author[scnu]{Hongjing Pan}
\ead{panhj@m.scnu.edu.cn}

\author[sysu]{Ruixiang Xing}
\ead{xingrx@mail.sysu.edu.cn}

\author[sysu]{Zhannan Zhuang\corref{cor}}
\ead{zhuangzhn@m.scnu.edu.cn}

\cortext[cor]{Corresponding author}

\address[scnu]{School of Mathematical Sciences,
South China Normal University, Guangzhou 510631,  China}

\address[sysu]{School of Mathematics, Sun Yat-sen University, Guangzhou 510275, China}


\begin{abstract}
A Hopf bifurcation theorem is established for the abstract evolution equation $\frac{\mathrm{d}x}{\mathrm{d}t}=F(x,\lambda)$ in infinite dimensions under the degeneracy condition $\rea \mu ^{\prime}(\lambda_0)= 0$ and suitable assumptions. 
The stability properties of bifurcating periodic solutions are also derived. Interestingly, it is shown that a transcritical Hopf bifurcation still can occur at $\lambda_0$ although the stability property of the trivial solutions does not change near $\lambda_0$.
 Our results do not require the analyticity of $F$. 
 The main tools are the Lyapunov--Schmidt reduction and a Morse lemma. 
 Applications to a multi-parameter diffusive predator--prey system discover new branches of periodic solutions.
\end{abstract}

%

\begin{keyword}
Hopf Bifurcation \sep Degeneracy Condition  \sep  Principle of Exchange of Stability \sep Morse Lemma \sep  Diffusive Predator--Prey System.



\MSC[2020] 35B32 \sep 35B35 \sep 35K57 \sep 37G15 \sep 58E09
\end{keyword}

\end{frontmatter}


\section{Introduction}
\label{sec:1}
Hopf bifurcation is an important kind of dynamic bifurcations in various bifurcation phenomena and describes the birth of periodic solutions from equilibria in dynamical systems. 
It is a local bifurcation which occurs in the case when a pair of complex conjugate eigenvalues of the linearization crosses the imaginary axis in the complex plane. Hopf bifurcation is also known as Poincar\'{e}--Andronov--Hopf bifurcation, which was originally investigated in the context of finite dimensional dynamical systems generated by ODEs.
Hopf bifurcation in the setting of PDEs had been studied since the nineteen seventies; see e.g. \cite{Sattinger1971,Ruelle1971,Marsden1976,Joseph1972,Iudovich1971,Iooss1972,Henry1981,Fife1974} for some early works.
A classic and more general Hopf bifurcation theorem in infinite dimensions was established by Crandall and Rabinowitz in a seminal paper \cite{Crandall1977}.
We next state it in the form given in an excellent monograph \cite{Kielhoefer2012} by Kielh\"{o}fer.

Let $X$ and $Z$ be real Banach spaces.
Consider the parameter-dependent evolution equation
\be
\frac{\mathrm{d}x}{\mathrm{d}t}=F(x,\lambda),\label{a1}
\ee
where $F: U\times V\rightarrow Z$, $0\in U\subset X$ and $\lambda_0\in V \subset \mathbb{R}$ are open neighborhoods, $X\subset Z$ is continuously embedded. So $\frac{\mathrm{d}x}{\mathrm{d}t}$ is taken be an element of $Z$, and hence the evolution equation is well defined.
To be convenient later, we introduce some conditions following \cite[I.8]{Kielhoefer2012}:
\begin{enumerate}[(F1)]
  \item $F\in C^3(U\times V,Z)$.
  \item $F(0,\lambda)=0$ for $\lambda\in V$.
  \item $ i\kappa_0(\neq 0)$ is an algebraically simple eigenvalue of $A_0\equiv {D}_xF(0,\lambda_0)$ with eigenvector $\varphi_0 \notin R(i\kappa_0I-A_0)$, and $\pm i\kappa_0I-A_0$  are Fredholm operators of index zero.
 \item $\rea \mu ^{\prime}(\lambda_0)\ne 0$.
  \item For all $n\in \mathbb{Z}\backslash\{\pm1\}$, $in\kappa_0$ is not an eigenvalue of $A_0$.
  \item $A_0$ as a mapping in $Z$, with dense domain $D(A_0)=X$, generates an analytic (holomorphic) semigroup $e^{A_0t}\in L(Z,Z)$ for $t \geqslant 0$ that is compact for $t > 0$.
\end{enumerate}
Condition (F4) is referred to as the \emph{nondegeneracy} condition, in which $^{\prime}=\frac{\mathrm{d}}{\mathrm{d}\lambda}$ and the perturbed eigenvalues $\mu(\lambda)$ of ${D}_xF(0,\lambda)$ near $i\kappa_0$ along the trivial solution line are guaranteed by the Implicit Function Theorem (cf. \cite[Proposition I.7.2]{Kielhoefer2012}) and satisfy
\begin{equation}\label{eq:perteigen}
\begin{split}
&{D}_xF(0,\lambda)\varphi(\lambda)=\mu(\lambda)\varphi(\lambda) \; \hbox{ such that }
\mu(\lambda_0)=i\kappa_0  \hbox{ and } \varphi(\lambda_0)=\varphi_0,\\
&\hbox{and }  \mu(\lambda) \text{ are simple and continuously differentiable near } \lambda_0.
\end{split}
\end{equation} 
 (F5) is known as the \emph{nonresonance} condition. 
The eigenvalues of ${D}_xF(0,\lambda)$ are computed with respect to the natural complexifications $X_{c}$ and $Z_c$ of the real Banach spaces $X$ and $Z$ (cf. \cite[p.35]{Kielhoefer2012}). So $\mu$ is an eigenvalue of ${D}_xF(0,\lambda)$ if and only if $\bar{\mu}$ is an eigenvalue of that.
In what follows, we omit the subscript $c$ for simplicity.

The following form of the Hopf bifurcation theorem in infinite dimensions comes from \cite[Theorem I.8.2 and Corollary I.8.3]{Kielhoefer2012}.
\begin{thm}[See Fig.\ref{fig:surface0}]\label{thm1}
Consider problem \eqref{a1} under conditions (F1)--(F6).
Then there exists a continuously differentiable curve $\{(x(r), \lambda(r))\}$ of $2 \pi / \kappa(r)$-periodic solutions of \eqref{a1} through the bifurcation point $(x(0), \lambda(0))=(0, \lambda_{0})$ with $2 \pi / \kappa(0)={2 \pi}/{\kappa_{0}}$ in $\big(C_{2 \pi / \kappa(r)}^{1+\alpha}(\mathbb{R}, Z) \cap C_{2 \pi / \kappa(r)}^{\alpha}(\mathbb{R}, X)\big) \times \mathbb{R}$.
Every other periodic solution of \eqref{a1} in a neighborhood of $\left(0, \lambda_{0}\right)$ is obtained from $(x(r), \lambda(r))$ by a phase shift $S_{\theta} x(r)$.
In particular, $x(-r)=S_{\pi / \kappa(r)} x(r), \kappa(-r)=\kappa(r),$ and $\lambda(-r)=\lambda(r)$ for all $r \in(-\delta, \delta)$.
Moreover, the tangent vector of $\{(x(r), \lambda(r))\}$ with $2 \pi / \kappa(r)$-period at $(0, \lambda_{0})$ is given by $(\dot{x}(0),\dot{\lambda}(0))=(\rea (\varphi_0 e^{i\kappa_0 t}),0)$ with $\dot{\kappa}(0)=0$ (here, $\dot{}=\frac{\mathrm{d}}{\mathrm{d}r}$).
\end{thm}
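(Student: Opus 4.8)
The plan is to recast the search for periodic solutions of \eqref{a1} as a bifurcation problem for a single nonlinear operator equation posed in spaces of $2\pi$-periodic functions, and then to exploit the circle-group symmetry coming from time translation in order to reduce the two-dimensional kernel to a scalar bifurcation equation solvable by the Implicit Function Theorem. Since the bifurcating solutions are expected to have period close to $2\pi/\kappa_0$, I would first normalise the unknown period by setting $\tau=\kappa t$ and seeking $2\pi$-periodic functions $x(\tau)$ solving $\kappa\frac{\mathrm{d}x}{\mathrm{d}\tau}=F(x,\lambda)$. This rewrites the problem as $G(x,\kappa,\lambda):=\kappa\frac{\mathrm{d}x}{\mathrm{d}\tau}-F(x,\lambda)=0$, which I regard as a map between the H\"older spaces of $2\pi$-periodic functions appearing in the statement; condition (F1) is precisely what transfers the required smoothness (at least $C^2$) to the associated substitution operator and hence to $G$.

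Next I would linearise at the trivial branch, obtaining $D_xG(0,\kappa_0,\lambda_0)=\kappa_0\frac{\mathrm{d}}{\mathrm{d}\tau}-A_0$. Expanding periodic functions into Fourier modes $e^{in\tau}\psi$, such a mode lies in the kernel exactly when $in\kappa_0$ is an eigenvalue of $A_0$. The algebraic simplicity of $i\kappa_0$ from (F3) together with the nonresonance condition (F5) then force contributions only from $n=\pm1$, so the kernel is exactly two-dimensional, spanned by $\rea(\varphi_0 e^{i\tau})$ and $\im(\varphi_0 e^{i\tau})$; the Fredholm hypotheses in (F3) make $D_xG$ Fredholm of index zero with a matching two-dimensional cokernel. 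I would then perform the Lyapunov--Schmidt reduction, splitting domain and range along this kernel and a complement and projecting $G=0$ onto the cokernel.

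The decisive step is to remove the translation degeneracy. Because $G$ is equivariant under the phase shift $S_\theta$, the whole orbit $S_\theta x$ of any solution is again a solution; this is the source both of the two-dimensional kernel and of the assertion that every nearby periodic solution is a phase shift of $x(r)$. Writing the kernel coordinate as a single complex number $z$, the $S^1$-equivariance forces the reduced map to take the form $z\mapsto z\,g(|z|^2,\kappa,\lambda)$, so that after setting $z=r$ real the bifurcation equation reduces, for $r\ne0$, to the single complex scalar equation $g(r^2,\kappa,\lambda)=0$ (equivalently, a pair of real equations). The nondegeneracy condition (F4), $\rea\mu'(\lambda_0)\ne0$, is exactly the transversality that renders the Jacobian of $g$ with respect to $(\kappa,\lambda)$ invertible at $r=0$, whence the Implicit Function Theorem produces $\kappa$ and $\lambda$ as continuously differentiable functions of $r^2$, and thus the $C^1$ curve $(x(r),\kappa(r),\lambda(r))$ with $\kappa(0)=\kappa_0$ and $\lambda(0)=\lambda_0$. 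I expect this transversality bookkeeping---relating $\mu'(\lambda_0)$, via differentiation of the eigenvalue relation \eqref{eq:perteigen} along the trivial line, to the derivative of $g$---to be the main obstacle.

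Finally, the symmetry relations are essentially built into this structure. Since $\kappa$ and $\lambda$ emerge as functions of $r^2$, they are even in $r$, giving $\kappa(-r)=\kappa(r)$, $\lambda(-r)=\lambda(r)$ and hence $\dot\kappa(0)=\dot\lambda(0)=0$; the companion relation $x(-r)=S_{\pi/\kappa(r)}x(r)$ comes from the fact that replacing $r$ by $-r$ (i.e. $z=r$ by $z=-r=e^{i\pi}r$) is, through the circle action, a half-period phase shift. The tangent $\dot x(0)$ is read directly from the Lyapunov--Schmidt ansatz, in which the solution equals $r\,\rea(\varphi_0 e^{i\tau})$ to leading order plus higher-order corrections; returning to the original time variable $t$ yields $\dot x(0)=\rea(\varphi_0 e^{i\kappa_0 t})$, which completes the proof.
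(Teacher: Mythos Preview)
Your proposal is correct and follows essentially the same approach that the paper recalls in Section~3.1 (itself the argument of Kielh\"{o}fer \cite[I.8]{Kielhoefer2012}, from which Theorem~\ref{thm1} is cited rather than proved): normalise the period to get $G(x,\kappa,\lambda)=\kappa\frac{\mathrm{d}x}{\mathrm{d}t}-F(x,\lambda)$, show $J_0=D_xG(0,\kappa_0,\lambda_0)$ is Fredholm of index zero with two-dimensional kernel $N(J_0)=\{c\varphi_0e^{it}+\overline{c}\,\overline{\varphi}_0e^{-it}\}$ via (F3), (F5), (F6), perform Lyapunov--Schmidt, use the $S^1$-equivariance $\hat\Phi(e^{i\theta}c,\kappa,\lambda)=e^{i\theta}\hat\Phi(c,\kappa,\lambda)$ to reduce to $\tilde\Phi(r,\kappa,\lambda)=\hat\Phi(r,\kappa,\lambda)/r=0$, and then solve for $(\kappa,\lambda)$ by the Implicit Function Theorem using the transversality \eqref{eq:matrix1} together with the identification $\mu'(\lambda_0)=\langle D^2_{x\lambda}F(0,\lambda_0)\varphi_0,\varphi_0^\ast\rangle$. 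The evenness of $\kappa,\lambda$ and the half-period shift for $x(-r)$ are likewise derived there from the $S^1$-equivariance, exactly as you outline.
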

\begin{figure} 
\centering
\includegraphics[totalheight=1.5in]{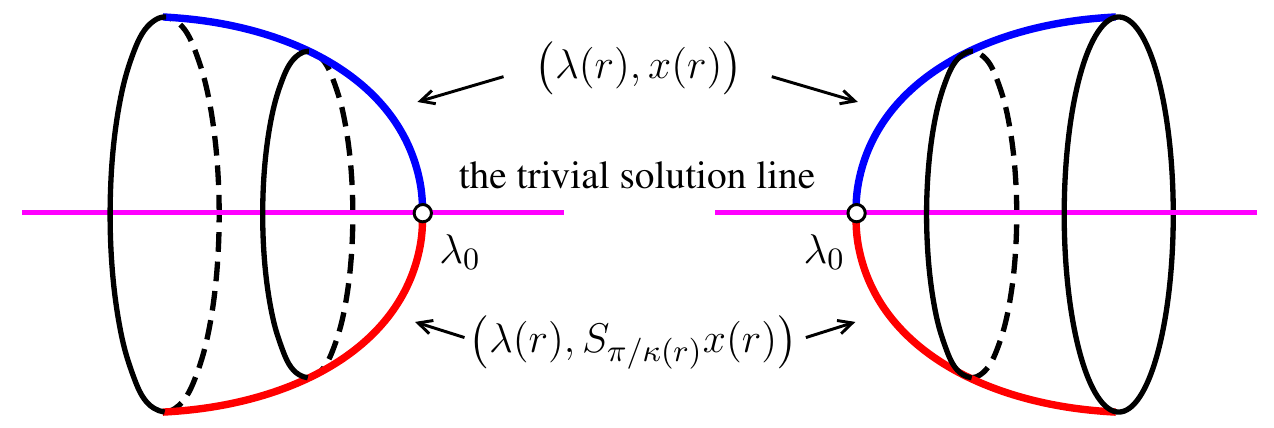}
\caption{Local surfaces composed of bifurcating periodic solutions in Theorem \ref{thm1}. Left: the subcritical case. Right: the supercritical case. 
Every other periodic solution of \eqref{a1} in a neighborhood of $\left(0, \lambda_{0}\right)$ is described from $(x(r), \lambda(r))$ by a phase shift $S_{\theta} x(r)$, 	e.g., the lower branches (red curves) are obtained via the  phase shift $S_{\pi/\kappa(r)}$ of the upper branches (blue curves).}
\label{fig:surface0}
\end{figure}
\begin{rem}
\emph{(a)} In the above theorem, $C_{p}^{\alpha}(\mathbb{R}, X)$ and $C_{p}^{1+\alpha}(\mathbb{R}, Z)$ are the standard H\"{o}lder spaces of $p$-periodic functions. That is,
$C_{p}^\alpha(\mathbb{R},X)=\Big\{z:\mathbb{R}\rightarrow X~|~z(t+p)=z(t),t\in \mathbb{R}, \|z\|_{X,\alpha}=\max\limits_{t\in\mathbb{R}}\|z(t)\|_X+\sup\limits_{s\ne t}\frac{\|z(t)-z(s)\|_X}{|t-s|^\alpha}<\infty\Big\},$
$C_{p}^{1+\alpha}(\mathbb{R},Z)=\Big\{z:\mathbb{R}\rightarrow Z~|~z,\frac{\mathrm{d}z}{ \mathrm{d} t}\in C_{p}^{\alpha}(\mathbb{R},Z),
\|z\|_{Z,1+\alpha}$ $=\|z\|_{Z,\alpha}+\|\frac{\mathrm{d}z}{\mathrm{d} t}\|_{Z,\alpha}<\infty\Big\}$. The phase shift $S_{\theta}$ means that $(S_{\theta} z)(t)=z(t+\theta)$.\\
\emph{(b)} If \eqref{a1} is an ODE, i.e., if $X=Z=\mathbb{R}^n$, then as noted in \cite[Remark I.8.4]{Kielhoefer2012}, the above H\"{o}lder continuity is not needed, and the regularity $F\in C^2(U\times V,\mathbb{R}^n)$ is enough in (F1).
\end{rem}

To state the stability properties of bifurcating solutions, we need the following two conditions:
\begin{enumerate}
 \item[(F1$^{\prime}$)] $F\in C^4(U\times V,Z)$.
 \item[(F7)\,] Apart from the two simple eigenvalues $\pm i \kappa_{0}$, the entire spectrum of $A_{0}$ is in the left complex half-plane.
\end{enumerate}

 The following form of the (linear) stability theorem of the bifurcating periodic solutions  comes from \cite[Theorem I.12.2 and Corollary I.12.3]{Kielhoefer2012}.
\begin{thm}[See Fig.\ref{fig:stable00}]\label{thm:stable1}
 Let $\{(x(r), \lambda(r)) \mid r \in(-\delta, \delta)\}$ be the curve of $2 \pi / \kappa(r)$-periodic solutions of \eqref{a1} according to Theorem \ref{thm1}. Assume that (F1$^{\prime}$) holds.
 Let $\mu_{2}(r)$ be the nontrivial Floquet exponent of $x(r)$ such that $\mu_{2}(0)=0 .$ Then $\dot{\mu}_{2}(0)=0$ and $
\ddot{\mu}_{2}(0)=2 \rea\mu^{\prime}(\lambda_{0}) \ddot{\lambda}(0). $
Further assume that (F7) holds and assume that $\rea  \mu^{\prime}(\lambda_{0})>0,$
i.e., the trivial solution $\{(0, \lambda)\}$ of \eqref{a1} is stable for $\lambda<\lambda_{0}$ and unstable for $\lambda>\lambda_{0}$. Then
$$
\operatorname{sign}(\lambda(r)-\lambda_{0})=\operatorname{sign} \mu_{2}(r) \text { for } r \in(-\delta, \delta),
$$
which means that
the bifurcating periodic solution $\{(x(r), \lambda(r))\}$ of \eqref{a1} is stable, provided that the bifurcation is supercritical, and it is unstable if the
bifurcation is subcritical. If $\rea \mu^{\prime}(\lambda_{0})<0,$ the stability properties of the trivial solution are reversed
and the stability of the bifurcating periodic solution is also reversed.
\end{thm}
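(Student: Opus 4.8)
The plan is to recast the linear stability of the orbit $x(r)$ as a spectral problem for its variational equation and to follow the one eigenvalue that governs it; the strategy parallels the classical exchange-of-stability argument. First I would rescale time by $\tau=\kappa(r)t$ so that every orbit in the family has the common period $2\pi$, and write the Floquet problem along $x(r)$ as $\mathcal{L}(r)v=\mu v$, where $\mathcal{L}(r):=\kappa(r)\frac{\mathrm{d}}{\mathrm{d}\tau}-{D}_xF(x(r),\lambda(r))$ acts on $2\pi$-periodic functions and its eigenvalues are the Floquet exponents of $x(r)$, in the normalization for which asymptotic stability of the orbit corresponds to all nonzero exponents lying in the right half-plane $\{\rea\mu>0\}$. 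Because \eqref{a1} is autonomous, differentiating $\kappa(r)\frac{\mathrm{d}}{\mathrm{d}\tau}x(r)=F(x(r),\lambda(r))$ in $\tau$ gives $\mathcal{L}(r)\frac{\mathrm{d}}{\mathrm{d}\tau}x(r)=0$, so $\mu_1(r)\equiv0$ is a persistent exponent. At $r=0$ we have $\mathcal{L}(0)=\kappa_0\frac{\mathrm{d}}{\mathrm{d}\tau}-A_0$; expanding $v$ in Fourier modes $\varphi e^{in\tau}$ and using (F3) and (F5) (only $n=\pm1$ resonate, and $0\notin\sigma(A_0)$), its kernel is exactly the two-dimensional real space $\operatorname{span}\{\rea(\varphi_0e^{i\tau}),\im(\varphi_0e^{i\tau})\}$, so $0$ is a double eigenvalue of $\mathcal{L}(0)$. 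Condition (F6) places us in the analytic-semigroup framework in which the exponents form a discrete set of isolated eigenvalues of finite multiplicity and the principle of linearized stability applies to the orbit, while (F7) furnishes a uniform gap keeping every exponent other than the two that split from $0$ safely inside $\{\rea\mu>0\}$ for small $r$. Hence, besides $\mu_1\equiv0$, stability is decided by the single eigenvalue $\mu_2(r)$ that perturbs off $0$.

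Next I would run a Lyapunov--Schmidt reduction of $\mathcal{L}(r)v=\mu v$ on this two-dimensional kernel, projecting with the adjoint eigenfunction $\varphi_0^{*}$ (where $A_0^{*}\varphi_0^{*}=-i\kappa_0\varphi_0^{*}$, normalized by $\langle\varphi_0^{*},\varphi_0\rangle=1$), a pairing that selects the resonant $n=\pm1$ Fourier modes. Feeding in the tangent data of Theorem \ref{thm1}, namely $\dot x(0)=\rea(\varphi_0e^{i\tau})$ and $\dot\lambda(0)=\dot\kappa(0)=0$, the first $r$-derivative of $\mathcal{L}$ at $r=0$ reduces to $-{D}_{xx}F(0,\lambda_0)[\dot x(0)]$; as this operator carries the kernel modes $e^{\pm i\tau}$ into the nonresonant modes $e^{0}$ and $e^{\pm2i\tau}$, its $\varphi_0^{*}$-projection vanishes, giving $\dot\mu_2(0)=0$ by the very resonance mechanism that forces $\dot\lambda(0)=0$ in Theorem \ref{thm1}. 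The extra regularity (F1$'$) is what guarantees that $\mu_2(r)$ is twice continuously differentiable, so that its second derivative is meaningful.

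The crux is the second-order coefficient. The efficient route recognizes that the eigenvalue equation $\mathcal{L}(r)v=\mu v$ and the equation defining $x(r)$ are handled by one and the same reduction, and that the phase-shift ($S^1$) symmetry forces the reduced complex bifurcation function to be equivariant. Dividing out the amplitude, its real part is the branching equation $\rea\mu(\lambda(r))+\gamma r^{2}=o(r^{2})$, where $\gamma$ is a real Lyapunov-type coefficient assembled from ${D}_{xx}F,{D}_{xxx}F$ at $(0,\lambda_0)$ and the profile correction $\ddot x(0)$, and where $\rea\mu(\lambda)$ enters because the $\varphi_0^{*}$-projection of the $\lambda$-linearization is exactly $\mu'(\lambda_0)=\langle\varphi_0^{*},{D}_{x\lambda}F(0,\lambda_0)\varphi_0\rangle$. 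The same symmetry identifies $\mu_2(r)$ with the amplitude derivative of the reduced function along the branch, yielding $\mu_2(r)=-2\gamma r^{2}+o(r^{2})$. The common coefficient $\gamma$ now cancels: from $\gamma r^{2}=-\rea\mu'(\lambda_0)(\lambda(r)-\lambda_0)+o(r^{2})$ (using $\rea\mu(\lambda_0)=0$ and $\lambda(r)-\lambda_0=O(r^2)$) one gets $\mu_2(r)=2\,\rea\mu'(\lambda_0)(\lambda(r)-\lambda_0)+o(r^{2})$, hence $\ddot\mu_2(0)=2\,\rea\mu'(\lambda_0)\ddot\lambda(0)$. The genuine obstacle is making this cancellation rigorous in the Banach-space setting: one must verify that both reductions are controlled by the single adjoint projection $\varphi_0^{*}$ and that the amplitude-derivative identification of $\mu_2$ is exact, so that the a priori unknown $\gamma$ — which carries all the nonlinear data and, together with the frequency correction $\ddot\kappa(0)$ fixed by the imaginary part, absorbs every quadratic and cubic contribution — drops out and only the linear crossing speed $\rea\mu'(\lambda_0)$ survives in $\rea\ddot\mu_2(0)$.

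Finally, the sign statement follows. Since $\dot\lambda(0)=0$ gives $\lambda(r)-\lambda_0=\tfrac12\ddot\lambda(0)r^{2}+o(r^{2})$, the relation above reads $\mu_2(r)=\rea\mu'(\lambda_0)\,\ddot\lambda(0)\,r^{2}+o(r^{2})$, so for small $r\neq0$ (with $\ddot\lambda(0)\neq0$) we obtain $\operatorname{sign}\mu_2(r)=\operatorname{sign}\big(\rea\mu'(\lambda_0)\big)\cdot\operatorname{sign}\big(\lambda(r)-\lambda_0\big)$. By (F7) every exponent except $\mu_1\equiv0$ and $\mu_2(r)$ remains in $\{\rea\mu>0\}$, so the orbit is asymptotically stable precisely when $\mu_2(r)>0$. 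When $\rea\mu'(\lambda_0)>0$ this gives $\operatorname{sign}(\lambda(r)-\lambda_0)=\operatorname{sign}\mu_2(r)$, i.e. stability in the supercritical case ($\lambda(r)>\lambda_0$) and instability in the subcritical case; when $\rea\mu'(\lambda_0)<0$ both the stability of the trivial branch and that of the bifurcating orbit are reversed, completing the statement.
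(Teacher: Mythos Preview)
Your proposal is correct and takes essentially the same approach as the paper, which quotes this theorem from Kielh\"ofer without reproving it but develops the identical machinery in Section~4 for the degenerate analogue. Your ``amplitude-derivative identification of $\mu_2$'' is precisely identity \eqref{c1.4}, made rigorous there by showing $\dot w_2(0)=\frac{\mathrm d}{\mathrm d r}D_v\psi[\hat v_2]\big|_{r=0}$, and your unknown coefficient $\gamma$ is the paper's $H_{11}$: combining \eqref{mu2} (with $\dot\lambda(0)=\dot\kappa(0)=0$) and the branching relation $H_{11}=\rea\mu'(\lambda_0)\,\ddot\lambda(0)$ obtained by differentiating $\tilde\Phi(r,\kappa(r),\lambda(r))=0$ twice gives $\ddot\mu_2(0)=3H_{11}-\rea\mu'(\lambda_0)\,\ddot\lambda(0)=2\,\rea\mu'(\lambda_0)\,\ddot\lambda(0)$.
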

\begin{figure} 
\centering
\includegraphics[totalheight=1.20in]{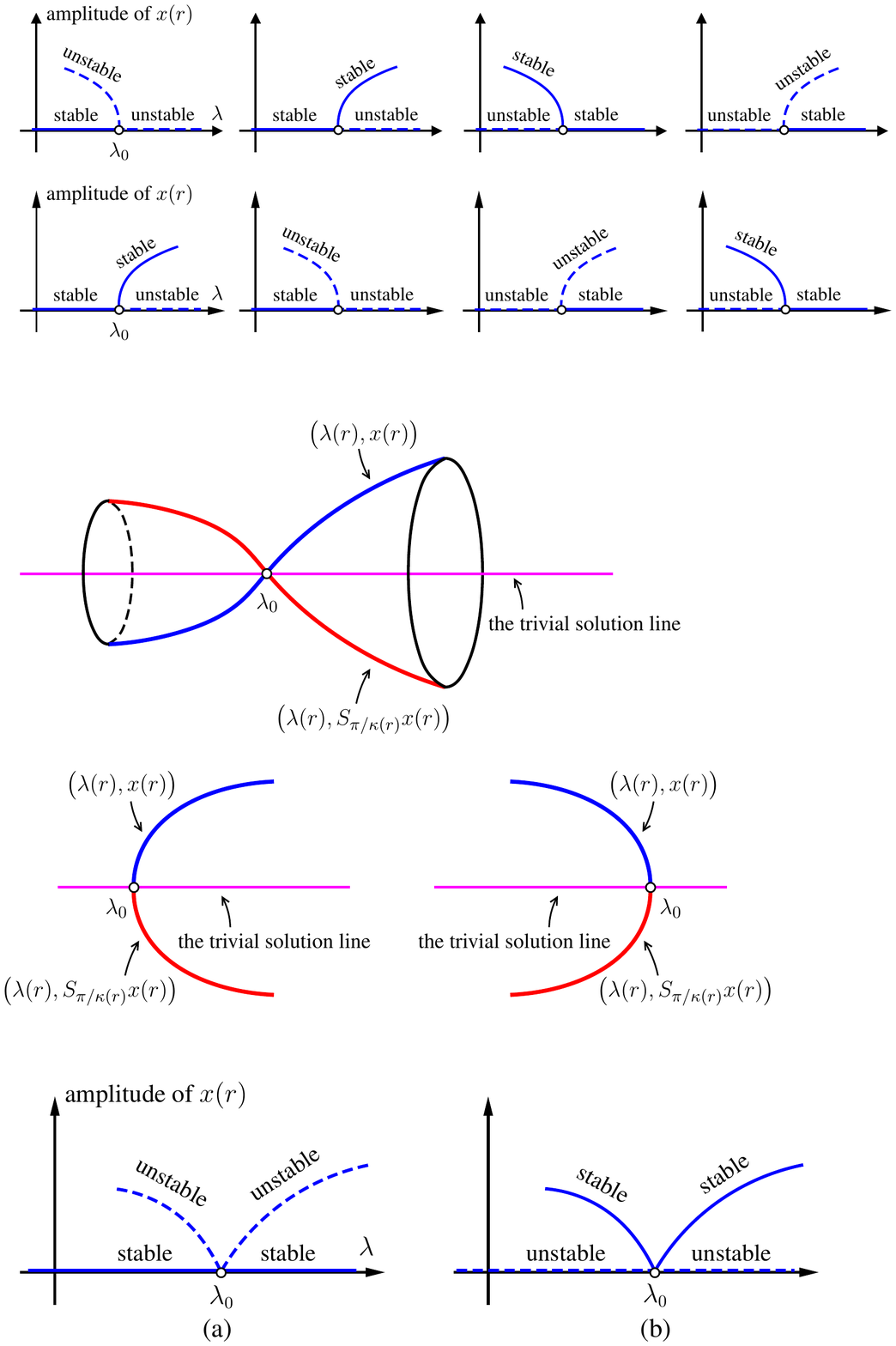}
\caption{The stability properties for the trivial solutions and the bifurcating periodic solution in Theorem \ref{thm:stable1}. The two on the left: $\rea \mu ^{\prime}(\lambda_0)> 0$;
the two on the right: $\rea \mu ^{\prime}(\lambda_0)< 0$.}\label{fig:stable00}
\end{figure}
As sketched in Fig.\ref{fig:stable00}, Theorem \ref{thm:stable1} also implies the \emph{Principle of Exchange of Stability}, which says that consecutive curves of periodic solutions ordered in the $(r, \lambda)$-plane  (including the trivial line $\{(0, \lambda)\} $) have opposite stability properties
 (cf. \cite{Crandall1977a} or \cite[p.89]{Kielhoefer2012}). 

In this paper we are interested in the situation that the nondegeneracy condition (F4) is violated, i.e., the opposite of (F4) holds:
\begin{enumerate}[(F4$^{\prime}$)]
	\item $\rea \mu ^{\prime}(\lambda_0)= 0$.
\end{enumerate}
Condition (F4) plays a crucial role in Theorem \ref{thm1} and means
that the simple eigenvalue $\mu(\lambda)$ of $D_xF(0, \lambda)$ leaves the stable left complex half-plane through the imaginary axis at $\lambda =\lambda_0$ with ``nonvanishing speed'', which leads to the change of stability of the trivial solution.
It is natural to ask what happens if (F4$^{\prime}$) holds (i.e., through the imaginary axis with ``vanishing speed''), and whether a Hopf bifurcation occurs even if the stability of the trivial solution does not change near $\lambda_0$? 

%
For the case of (F4$^{\prime}$),
degenerate Hopf bifurcations in infinite dimensions had been deeply studied by Kielh\"{o}fer \cite{Kielhoefer1982,Kielhoefer1979,Kielhoefer1979a} under the assumption of  analyticity. The main results are summarized in his monograph \cite[I.17]{Kielhoefer2012}. Assuming the analyticity of $F$ and $\rea \mu^{\prime}(0)=\cdots=\rea\mu^{(m-1)}(0)=0$ but $\rea\mu^{(m)}(0)\neq 0$ for some $m>1$,
Kielh\"{o}fer proved that if $m$ is odd, then problem \eqref{a1} admits at least one and at most $m$ nontrivial bifurcaton curves of periodic solutions; if $m$ is even, Hopf bifurcation may or may not occur (cf. \cite[Theorem I.17.3]{Kielhoefer2012}). The main tool is the method of Newton's polygon.
For finite dimensional problems, degenerate Hopf bifurcations under (F4$^{\prime}$) also had been extensively studied (cf. \cite{Chafee1968}, \cite{Schmidt1978} and \cite[VIII.5]{Golubitsky1985}).
To the authors' best knowledge, no general results of problem \eqref{a1} in infinite dimensions  under (F4$^{\prime}$) are established without the assumption of the analyticity of $F$.

Our main purpose of this paper is to establish an abstract Hopf bifurcation theorem in infinite dimensions under the degeneracy (F4$^{\prime}$) but no assuming the analyticity of $F$.
We show that in our situation a transcritical Hopf bifurcation still can happen at $\lambda_0$ although the stability property of the trivial solution line does not change  near $\lambda_0$.
We also obtain the stability of bifurcating periodic solutions. 
The main tools are a Lyapunov–-Schmidt reduction and an improved Morse lemma, and the latter comes from Liu et al. \cite{Liu2007} (also see Chang \cite[Theorem I.5.1]{Chang1993} or Kuiper \cite{Kuiper1972}).
The classic Morse lemma was firstly applied to bifurcation problems in Nirenberg \cite[Chapter 3]{Nirenberg2001}. Our research is inspired by the works of Liu et al. \cite{Liu2007,Liu2013a}, where the improved Morse lemma was applied to stationary bifurcation problems in infinite dimensions when the \emph{transversality} condition fails,
and two abstract bifurcation theorems were established for the saddle-node bifurcation and bifurcation from a degenerate simple eigenvalue, respectively.
As those in \cite{Crandall1977,Liu2007,Liu2013a},  our abstract theorems also directly deal with the original
 infinite dimensional problems, the conditions depend on some Fr\'{e}chet derivatives of $F$, do not involve reduced finite-dimensional problems and hence is convenient to use.
As shown in some applications,
 our new theorems can be viewed as a supplement of the classic Hopf bifurcation theorem in infinite dimensions and  usually provide ``hidden'' new  branches of bifurcating periodic solutions, by rechecking the range of parameters under the \emph{degeneracy} condition.

We organize the paper as follows.  We state our main results in Section 2. In Section 3, we prove the degenerate Hopf bifurcaiton theorem --- Theorem \ref{thm2}.  Section 4 contains
the proof of the stability theorem --- Theorem \ref{thm:stable2}.
Finally in Section 5, we give some applications of the main results to a diffusive predator--prey system.

\section{Main Results}
\label{sec:2}

\noindent\textbf{Notations } Let $\varphi_0$ be the eigenvector of $A_0$ given in (F3).
Denote by $\langle\cdot,\varphi^{\ast}_0\rangle$ a bounded linear functional such that $\langle\varphi_0,\varphi^{\ast}_0\rangle=1$ and $\langle z,\varphi^{\ast}_0\rangle=0$ for every $z\in R(i\kappa_0I-A_0)$, guaranteed by the Hahn-Banach Theorem. Set
\begin{equation}\label{eq:phi}
\psi_0(t)=\varphi_0 e^{it}\quad\text{and} \quad\psi^{\ast}_0(t)=\varphi^{\ast}_0e^{-it}.
\end{equation}
Denote by $D_xF[\cdot]$, $D^2_{xx}F[\cdot,\cdot]$, $D^3_{xxx}F[\cdot,\cdot,\cdot]$, $D^2_{x\lambda}F[\cdot]$ and $D^2_{x\kappa}F[\cdot]$ the Frech\'{e}t derivatives and partial derivatives of $F$ with respect to $x,\lambda$ and $\kappa$ (the last two variables are one-dimensional parameters). They are linear or multilinear operators acting on functions in $[~]$ or $1$.
Set
\begin{align} \label{eq:HH}
\begin{split}
H_{11}&=\rea \big\langle-D_{xxx}^3F(0,\lambda_0)[\varphi_0,\varphi_0,\overline{\varphi}_0]
+2D_{xx}^2F(0,\lambda_0)\big[\varphi_0,A_0^{-1}D_{xx}^2F(0,\lambda_0)[\varphi_0,\overline{\varphi}_0]\big]\\
&\quad -D_{xx}^2F(0,\lambda_0)\big[\overline{\varphi}_0,(2i\kappa_0 I-A_0)^{-1}D_{xx}^2F(0,\lambda_0)[\varphi_0,\varphi_0]\big], \varphi^{\ast}_0\big\rangle,\\
H_{22}&=\rea \big\langle-D_{x\lambda\lambda}^3F(0,\lambda_0)[\varphi_0]-2D_{x\lambda}^2F(0,\lambda_0)\big[(i\kappa_0I-A_0)^{-1}\big(D_{x\lambda}^2F(0,\lambda_0)[\varphi_0]\\
&\quad-\langle D_{x\lambda}^2F(0,\lambda_0)[\varphi_0],\varphi^{\ast}_0\rangle\varphi_0\big)\big],\varphi^{\ast}_0\big\rangle.
\end{split}
\end{align}
Under conditions (F1) and (F3), $H_{11}$ and $H_{22}$ are well defined. In fact, $(i\kappa_0I-A_0)^{-1}$ maps $R(i\kappa_0I-A_0)$ to $R(i\kappa_0I-A_0)\cap X$ because
the algebraic simplicity of $i\kappa_0$ implies the decompositions $Z=R(i\kappa_0I-A_0)\oplus\spa\{\varphi_{0}\}$ and $X=(R(i\kappa_0I-A_0)\cap X)\oplus\spa\{\varphi_{0}\}$.

The following results are our main theorems on the degenerate Hopf bifurcation. 
\begin{thm}[See Fig.\ref{fig:surface}]\label{thm2}
Consider problem \eqref{a1} under conditions (F1$^{\prime}$)(F2)(F3)(F4$^{\prime}$)(F5) (F6).
Set the diagonal matrix $H_0=\Diag(H_{11},H_{22})$.
Then the following assertions hold:
\begin{enumerate}[(1)]
  \item If $H_0$ is definite, i.e., $\det H_0=H_{11}H_{22}>0$, then the set of periodic solutions of \eqref{a1} near $(x,\lambda)=(0,\lambda_0)$ is the trivial solution line $\{(0, \lambda)\} $.
  \item If $H_0$ is indefinite, i.e., $\det H_0=H_{11}H_{22}<0$, then apart from the trivial solution line $\{(0, \lambda)\} $,
       there exists a continuously differentiable curve $\{(x(r), \lambda(r))\}$ of $2 \pi / \kappa(r)$-periodic solutions of \eqref{a1} through the bifurcation point $(x(0), \lambda(0))=(0, \lambda_{0})$ with $\kappa(0)=\kappa_{0}$ in $\big(C_{2 \pi / \kappa(r)}^{1+\alpha}(\mathbb{R}, Z) \cap C_{2 \pi / \kappa(r)}^{\alpha}(\mathbb{R}, X)\big) \times \mathbb{R}$.
Every other periodic solution of \eqref{a1} in a neighborhood of $\left(0, \lambda_{0}\right)$ is obtained from $(x(r), \lambda(r))$ by a phase shift $S_{\theta} x(r)$.
Moreover, the tangent vector of $\{(x(r), \lambda(r))\}$ with $2 \pi / \kappa(r)$-period at $(0, \lambda_{0})$ is given by
\begin{equation}\label{eq:tangent1}
\big(\dot{x}(0),\dot{\lambda}(0)\big)=\big(2\rea (\varphi_0 e^{i\kappa_0 t}),\eta\big)\quad \hbox{with } \dot{\kappa}(0)=\im \langle D_{x\lambda}^2F(0,\lambda_0)[\varphi_0],\varphi^{\ast}_0\rangle\eta,
\end{equation}
where $\eta=\sqrt{-{H_{11}}/{H_{22}}}$. 
\end{enumerate}
\begin{figure} 
\centering
\includegraphics[totalheight=1.5in]{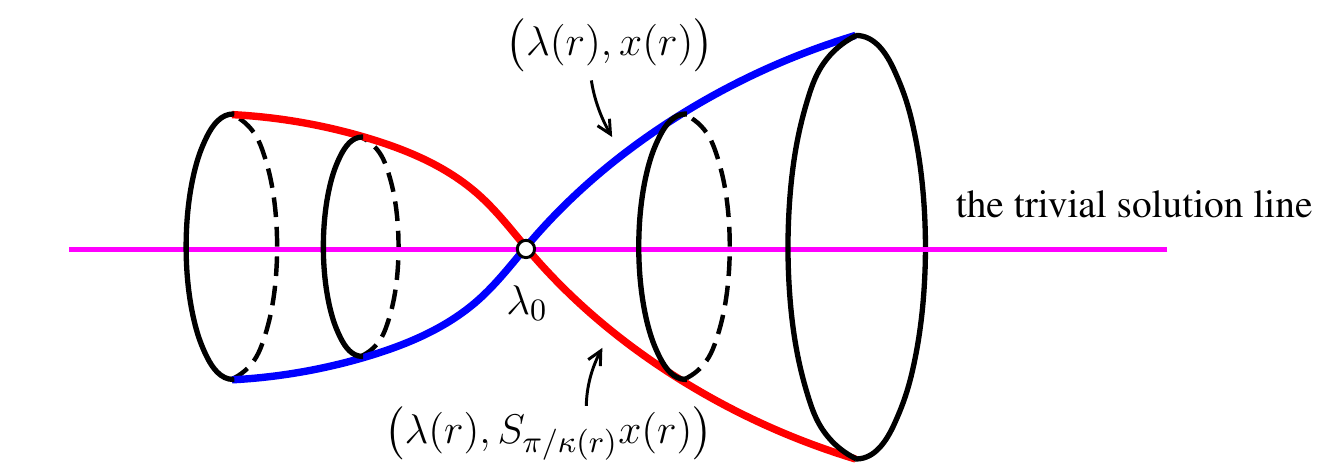}
\caption{The local surface composed of bifurcating periodic solutions in Theorem \ref{thm2}(2). Transcritical Bifurcation.
Every other periodic solution of \eqref{a1} in a neighborhood of $\left(0, \lambda_{0}\right)$ is derived from $(x(r), \lambda(r))$ by a phase shift $S_{\theta} x(r)$, 	e.g., the branch (red) is obtained by phase shift $S_{\pi/\kappa(r)}$ from the branch $(\{x(r), \lambda(r))\}$ (blue) and vice verse.}\label{fig:surface}
\end{figure}
\end{thm}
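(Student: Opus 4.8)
The plan is to convert the search for periodic orbits into a zero problem on $2\pi$-periodic functions, reduce it by Lyapunov--Schmidt to a single scalar equation $g(r,\lambda)=0$, and then read off the local zero set from the Morse lemma; the degeneracy (F4$'$) is exactly what turns $(0,\lambda_0)$ from a regular point of $g$ (as in Theorem \ref{thm1}) into a nondegenerate critical point. First I rescale time by $s=\kappa t$, so that $2\pi/\kappa$-periodic solutions of \eqref{a1} become $2\pi$-periodic zeros of $G(y,\kappa,\lambda):=\kappa\frac{\mathrm{d}y}{\mathrm{d}s}-F(y,\lambda)$ on $C^{1+\alpha}_{2\pi}(\mathbb{R},Z)\cap C^{\alpha}_{2\pi}(\mathbb{R},X)$. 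By (F3), (F5) and (F6) the linearization $D_yG(0,\kappa_0,\lambda_0)=\kappa_0\frac{\mathrm{d}}{\mathrm{d}s}-A_0$ is a Fredholm operator of index zero whose kernel is the two real dimensions $\spa_{\mathbb{R}}\{\rea(\varphi_0 e^{is}),\im(\varphi_0 e^{is})\}$: indeed $e^{ins}$ lies in the kernel only when $in\kappa_0$ is an eigenvalue of $A_0$, which by (F5) forces $n=\pm1$. The cokernel is paired through $\psi^{\ast}_0(s)=\varphi^{\ast}_0 e^{-is}$.

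I then carry out the Lyapunov--Schmidt reduction consistently with the perturbed eigenvalue branch $\mu(\lambda)$ of \eqref{eq:perteigen}: writing $y=z\varphi(\lambda)e^{is}+\bar z\,\overline{\varphi(\lambda)}\,e^{-is}+w$ with $w$ in a complement of the kernel, the $C^4$ regularity in (F1$'$) and the Implicit Function Theorem produce a unique $w=w(z,\kappa,\lambda)=O(|z|^2)$ of class $C^3$ solving the complementary equation. Because $D_xF(0,\lambda)\varphi(\lambda)=\mu(\lambda)\varphi(\lambda)$, the linear-in-$z$ part of the reduced bifurcation function $B(z,\kappa,\lambda)=\frac{1}{2\pi}\int_0^{2\pi}\langle G,\varphi^{\ast}_0 e^{-is}\rangle\,\mathrm{d}s$ is $z(i\kappa-\mu(\lambda))$. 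The autonomy of \eqref{a1}, through the $S^1$-symmetry $y\mapsto S_\theta y$, forces the equivariance $B(e^{i\theta}z,\kappa,\lambda)=e^{i\theta}B(z,\kappa,\lambda)$, whence $B=z\,\Psi(|z|^2,\kappa,\lambda)$ with $\Psi$ a $C^2$ complex-valued function and $\Psi(0,\kappa,\lambda)=i\kappa-\mu(\lambda)$; the factor $z=0$ encodes the trivial line. Fixing the phase by $z=r\in\mathbb{R}$, the nontrivial solutions solve $\Psi(r^2,\kappa,\lambda)=0$, two real equations that are even in $r$. Since $\partial_\kappa\im\Psi(0,\kappa_0,\lambda_0)=1\neq0$, I solve $\im\Psi=0$ for $\kappa=\kappa(r^2,\lambda)$ with $\kappa(0,\lambda)=\im\mu(\lambda)$, and reduce to the single real equation $g(r,\lambda):=\rea\Psi(r^2,\kappa(r^2,\lambda),\lambda)=0$, which is even in $r$ and satisfies $g(0,\lambda)=-\rea\mu(\lambda)$.

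The heart of the argument is to show that $(0,\lambda_0)$ is a nondegenerate critical point of $g$ with Hessian $H_0$. From $g(0,\lambda)=-\rea\mu(\lambda)$ the degeneracy hypothesis (F4$'$) yields $\partial_\lambda g(0,\lambda_0)=-\rea\mu'(\lambda_0)=0$, while evenness in $r$ gives $\partial_r g(0,\lambda_0)=0$ and $\partial_{r\lambda}g(0,\lambda_0)=0$; hence $\operatorname{Hess}g(0,\lambda_0)$ is diagonal. Its $(\lambda,\lambda)$-entry is $\partial_{\lambda\lambda}g(0,\lambda_0)=-\rea\mu''(\lambda_0)$, and standard eigenvalue perturbation theory---in which $\mu'(\lambda_0)=\langle D^2_{x\lambda}F(0,\lambda_0)[\varphi_0],\varphi^{\ast}_0\rangle$ and the first-order eigenvector correction is $(i\kappa_0 I-A_0)^{-1}\big(D^2_{x\lambda}F(0,\lambda_0)[\varphi_0]-\langle D^2_{x\lambda}F(0,\lambda_0)[\varphi_0],\varphi^{\ast}_0\rangle\varphi_0\big)$---identifies $-\rea\mu''(\lambda_0)$ precisely with $H_{22}$. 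The $(r,r)$-entry $\partial_{rr}g(0,\lambda_0)$ is the amplitude (first Lyapunov) coefficient, obtained by resolving the second-order part of $w$ at $\lambda_0$ into its zeroth harmonic, inverted by $A_0^{-1}$, and its second harmonic, inverted by $(2i\kappa_0 I-A_0)^{-1}$ (both well defined since, by (F5), $0$ and $2i\kappa_0$ are not eigenvalues of $A_0$), and then feeding these together with $D^3_{xxx}F(0,\lambda_0)[\varphi_0,\varphi_0,\overline{\varphi_0}]$ into the $z^2\bar z$ coefficient of $B$; this reproduces $H_{11}$. Carrying out this second-order reduction so that the constants and, above all, the signs match---i.e. proving $\operatorname{Hess}g(0,\lambda_0)=H_0=\Diag(H_{11},H_{22})$---is the main obstacle of the proof.

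Finally I invoke the improved Morse lemma of Liu et al. \cite{Liu2007}, which, requiring only that $g$ be $C^2$ (guaranteed by (F1$'$)) near its nondegenerate critical point and assuming no analyticity, supplies a local change of coordinates $(r,\lambda)\mapsto(\xi,\zeta)$ with $g=\operatorname{sign}(H_{11})\xi^2+\operatorname{sign}(H_{22})\zeta^2$. If $H_0$ is definite ($H_{11}H_{22}>0$) the zero set of $g$ is the single point $(0,\lambda_0)$, so the only periodic solutions near $(0,\lambda_0)$ make up the trivial line, proving assertion (1). If $H_0$ is indefinite ($H_{11}H_{22}<0$) the zero set consists of two $C^1$ curves crossing transversally at $(0,\lambda_0)$ with asymptotic slopes $\lambda-\lambda_0=\pm\eta\,r$, $\eta=\sqrt{-H_{11}/H_{22}}$; since $g$ is even in $r$, these two curves are interchanged by $r\mapsto-r$, i.e. by the half-period phase shift $S_{\pi/\kappa(r)}$, so they form a single bifurcating branch $\{(x(r),\lambda(r))\}$ and every nearby periodic solution is one of its phase shifts, proving assertion (2). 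Reading the tangent along $\lambda(r)=\lambda_0+\eta r+o(r)$ and recalling $y=2r\,\rea(\varphi_0 e^{is})+O(r^2)$ gives $\dot\lambda(0)=\eta$, $\dot x(0)=2\rea(\varphi_0 e^{i\kappa_0 t})$, and $\dot\kappa(0)=\partial_\lambda\kappa(0,\lambda_0)\,\dot\lambda(0)=\im\mu'(\lambda_0)\,\eta=\im\langle D^2_{x\lambda}F(0,\lambda_0)[\varphi_0],\varphi^{\ast}_0\rangle\,\eta$, in agreement with \eqref{eq:tangent1}.
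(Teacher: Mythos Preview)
Your proposal is correct and follows the same overall architecture as the paper: reduce the periodic problem via Lyapunov--Schmidt and $S^1$-equivariance to a single real scalar equation in $(r,\lambda)$, show that under (F4$'$) the origin is a nondegenerate critical point with Hessian $\Diag(H_{11},H_{22})$, and conclude with the Morse lemma of \cite{Liu2007}. The differences are in execution. The paper performs the reduction with the \emph{fixed} eigenvector $\varphi_0$ and establishes the Hessian by a long direct computation of all second- and third-order derivatives of $\hat\Phi$ (Lemma~\ref{lem:3.1}); the identity $H_{22}=-\rea\mu''(\lambda_0)$ emerges only afterwards as Corollary~\ref{cor:h22}. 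Your device of expanding along the \emph{perturbed} eigenvector $\varphi(\lambda)$ is a genuine shortcut: it makes the linear part of the reduced function $i\kappa-\mu(\lambda)$, so $g(0,\lambda)=-\rea\mu(\lambda)$ and the $(\lambda,\lambda)$-entry of the Hessian is immediate, while evenness in $r$ disposes of the off-diagonal entry without the paper's derivative calculus. What your route gives up is the explicit formula \eqref{eq:HH} for $H_{22}$ in terms of derivatives of $F$ alone, which the paper can plug into applications without first computing the eigenvalue curve. One point to tighten: with a $\lambda$-dependent kernel direction your $w$ is not literally $O(|z|^2)$ uniformly in $\lambda$ under the fixed projection $I-Q$; the resulting correction to $\Psi(0,\kappa,\lambda)$ is, however, of third order in $(\kappa-\kappa_0,\lambda-\lambda_0)$ and therefore does not affect the Hessian---you should say this explicitly.
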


The next result gives the stability properties of the trivial solution and the bifurcating periodic solutions in the degenerate Hopf bifurcation.
\begin{thm}[See Fig.\ref{fig:1mems}]\label{thm:stable2}
Under the conditions of Theorem \ref{thm2}, further assume that $H_0$ is indefinite and (F7) holds. Let $\{(x(r), \lambda(r)) \mid r \in(-\delta, \delta)\}$ be the curves of $2 \pi / \kappa(r)$-periodic solutions of \eqref{a1} according to Theorem \ref{thm2}. If $H_{22}>0$, then the trivial solution $\{(0,\lambda)\}$ is stable near $(0,\lambda_0)$, and the bifurcating periodic solutions $\{(x(r),\lambda(r))\}$ with period $2\pi/\kappa(r)$, are unstable near $(0,\lambda_0)$.
If $H_{22}<0$, then the stability property of the trivial solution is reversed and that of the bifurcating periodic solutions is also reversed.
\end{thm}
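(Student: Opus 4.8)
The plan is to prove the two stability assertions by isolating, in each case, the single eigenvalue that can cross the imaginary axis and expanding it to second order, the first-order terms being annihilated by the degeneracy (F4$'$).

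Stability of the trivial line is governed by the simple eigenvalue $\mu(\lambda)$ of $A(\lambda)=D_xF(0,\lambda)$ continued from $\mu(\lambda_0)=i\kappa_0$; by (F7) and continuity the remaining spectrum stays in the open left half-plane for $\lambda$ near $\lambda_0$. Differentiating $A(\lambda)\varphi(\lambda)=\mu(\lambda)\varphi(\lambda)$ and pairing with $\varphi^{\ast}_0$ gives $\mu'(\lambda_0)=\langle D_{x\lambda}^2F(0,\lambda_0)[\varphi_0],\varphi^{\ast}_0\rangle$, which is purely imaginary by (F4$'$); a second differentiation, with the normalization $\langle\varphi'(\lambda_0),\varphi^{\ast}_0\rangle=0$ forcing $\varphi'(\lambda_0)=(i\kappa_0I-A_0)^{-1}\bigl(D_{x\lambda}^2F(0,\lambda_0)[\varphi_0]-\langle D_{x\lambda}^2F(0,\lambda_0)[\varphi_0],\varphi^{\ast}_0\rangle\varphi_0\bigr)$, identifies $\rea\mu''(\lambda_0)=-H_{22}$ upon comparison with \eqref{eq:HH}. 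Hence $\rea\mu(\lambda)=-\tfrac12H_{22}(\lambda-\lambda_0)^2+o((\lambda-\lambda_0)^2)$, so the trivial solution is asymptotically stable near $(0,\lambda_0)$ when $H_{22}>0$ and unstable when $H_{22}<0$.

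For the periodic branch I would follow the Floquet analysis underlying Theorem \ref{thm:stable1}, tracking the nontrivial Floquet exponent $\mu_2(r)$ with $\mu_2(0)=0$; by (F7) all remaining Floquet exponents stay off the imaginary axis, so stability is decided by $\mu_2(r)$. Rescaling time to a fixed $2\pi$-period, the Floquet eigenvalue problem reduces to the two-dimensional kernel $\spa\{\psi_0,\overline{\psi}_0\}$ of $\kappa_0\frac{\mathrm d}{\mathrm dt}-A_0$; the time-translation mode $\frac{\mathrm dx}{\mathrm dt}(r)$ keeps one reduced eigenvalue pinned at $0$, and $\mu_2(r)$ is governed by the surviving one, whose first-order behaviour is carried by $M_{11}(r)=\langle\langle\bigl(\kappa(r)\frac{\mathrm d}{\mathrm dt}-D_xF(x(r),\lambda(r))\bigr)\psi_0,\psi^{\ast}_0\rangle\rangle$ under the time-averaged pairing. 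A direct computation gives $\dot M_{11}(0)=i\dot\kappa(0)-\eta\mu'(\lambda_0)$, the amplitude term $D_{xx}^2F(0,\lambda_0)[\dot x(0),\psi_0]$ dropping out upon time-averaging; this vanishes precisely because (F4$'$) makes $\mu'(\lambda_0)$ imaginary while $\dot\kappa(0)=\im\mu'(\lambda_0)\,\eta$ by \eqref{eq:tangent1}, so $\dot\mu_2(0)=0$. Carrying the expansion to second order with the tangent data $\dot x(0)=2\rea\psi_0$, $\dot\lambda(0)=\eta$ of Theorem \ref{thm2}, I expect $\mu_2(r)=2\rea\mu(\lambda(r))+o(r^2)=H_{11}r^2+o(r^2)$, the two forms agreeing through $\rea\mu''(\lambda_0)=-H_{22}$ and $\eta^2=-H_{11}/H_{22}$. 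Thus $\mu_2(r)=H_{11}r^2+o(r^2)$ has a definite nonzero sign for small $r\ne0$, opposite to that of $H_{22}$ since $H_0$ is indefinite; combined with the stability correspondence of Theorem \ref{thm:stable1} and the trivial-line analysis above, this places the periodic orbit and the trivial line in opposite stability classes, so the periodic solution is unstable when $H_{22}>0$ and stable when $H_{22}<0$, as claimed.

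The main obstacle is the second-order step yielding $\mu_2(r)=H_{11}r^2+o(r^2)$, legitimate thanks to the $C^4$ regularity (F1$'$). The direct second derivative of the rescaled operator (carrying $D_{xxx}^3F$, $D_{xx}^2F$ and $D_{x\lambda}^2F$ on the second-order profile $\ddot x(0),\ddot\lambda(0),\ddot\kappa(0)$) together with the second-order perturbation term coupling through the range of $\kappa_0\frac{\mathrm d}{\mathrm dt}-A_0$ generate expressions structurally identical to the constituents of $H_{11}$ in \eqref{eq:HH}; the delicate point is to show these reorganize, through the branch relation $H_{11}+H_{22}\eta^2=0$ encoded in \eqref{eq:tangent1}, into exactly $H_{11}r^2$. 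Heuristically this is the center-manifold fact that the cubic amplitude coefficient locating the orbit cancels out of its stability exponent, leaving the parameter-tracking value $2\rea\mu(\lambda(r))$; the actual work is to reproduce that cancellation within the rigorous Lyapunov--Schmidt and Floquet bookkeeping already prepared for Theorems \ref{thm2} and \ref{thm:stable1}.
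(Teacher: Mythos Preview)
Your treatment of the trivial line (differentiating $A(\lambda)\varphi(\lambda)=\mu(\lambda)\varphi(\lambda)$ twice to obtain $\rea\mu''(\lambda_0)=-H_{22}$) and of the first-order vanishing $\dot\mu_2(0)=0$ agrees with the paper. The gap you yourself flag is the second-order step $\ddot\mu_2(0)=2H_{11}$, which you leave at the level of a center-manifold heuristic. The paper does not attempt the direct expansion you sketch (which would require $\ddot x(0),\ddot\kappa(0),\ddot\lambda(0)$ and the second-order correction to the Floquet eigenfunction); instead it proves the identity
\[
\ddot\mu_2(0)\;=\;\rea\frac{\mathrm d^2}{\mathrm d r^2}D_r\hat\Phi\bigl(r,\kappa(r),\lambda(r)\bigr)\Big|_{r=0},
\]
tying the Floquet exponent directly to the bifurcation function whose derivatives are already tabulated in Lemma~\ref{lem:3.1}. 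The argument is short: differentiating the generalized eigenvalue equation \eqref{c1} and the Lyapunov--Schmidt relation $(I-Q)D_xG[I+D_v\psi]=0$ once in $r$ and projecting onto $R(J_0)$ shows that $\dot w_2(0)$ and $\tfrac{\mathrm d}{\mathrm d r}D_v\psi[\hat v_2]\big|_{r=0}$ satisfy the same equation in $(I-P)(E\cap Y)$, hence coincide; the displayed identity then follows by comparing the second $r$-derivatives of $\mu_2$ and of $\rea D_r\hat\Phi$ term by term. Lemma~\ref{lem:3.1} now gives $\ddot\mu_2(0)=3H_{11}+H_{22}\eta^2=2H_{11}$, confirming $\mu_2(r)=H_{11}r^2+o(r^2)$.

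One cautionary remark: in the paper's decomposition the amplitude term contributes $3H_{11}$ and the parameter term contributes $H_{22}\eta^2=-H_{11}$, so your heuristic that the cubic amplitude coefficient ``cancels out, leaving $2\rea\mu(\lambda(r))$'' lands on the correct value $H_{11}r^2$ but misdescribes the mechanism. It is the link to $\hat\Phi$, not a cancellation of the amplitude contribution, that makes the computation clean and avoids the brute-force bookkeeping you anticipate.
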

\begin{figure} 
\centering
\includegraphics[totalheight=1.6in]{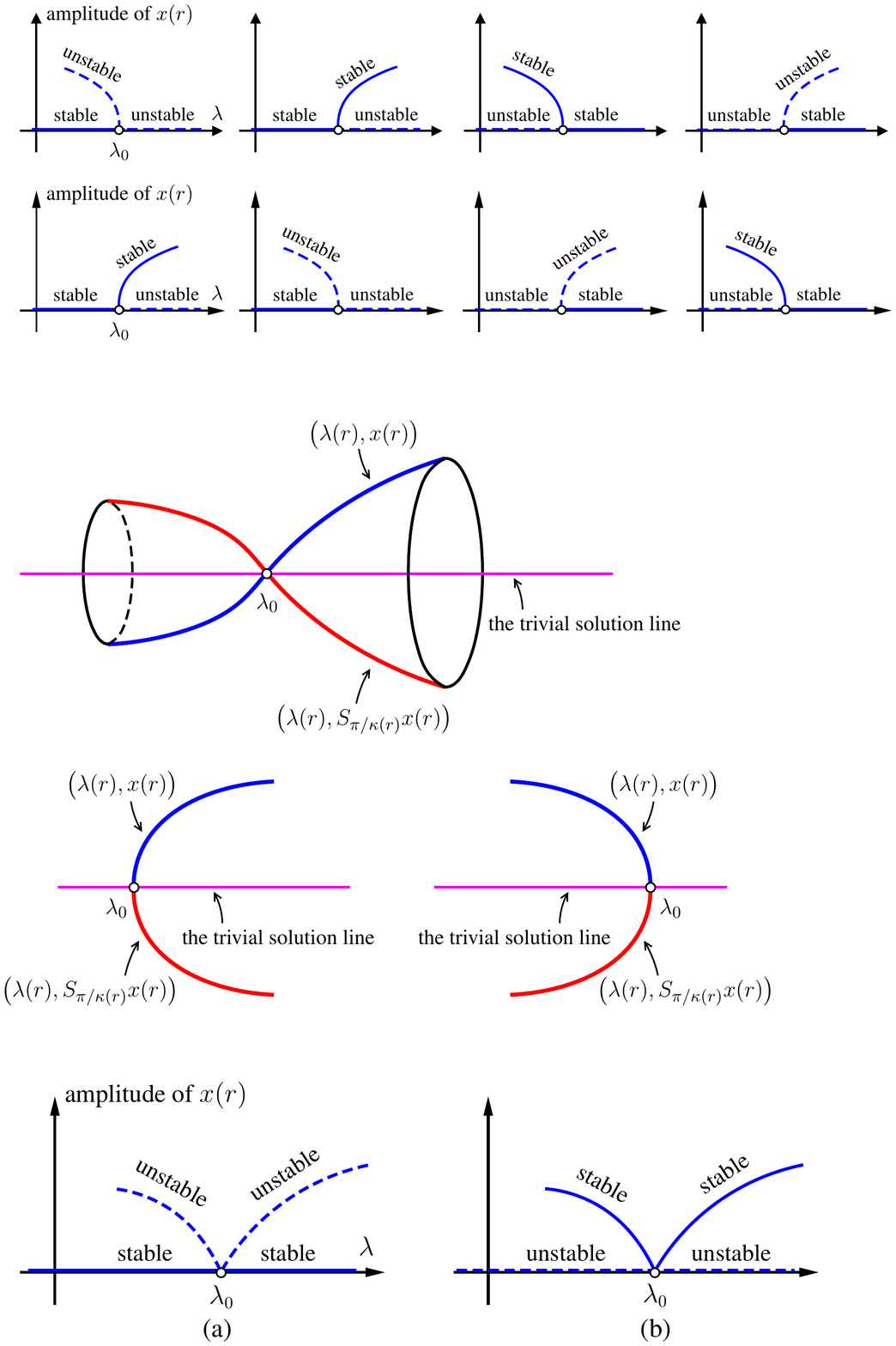}
\caption{The bifurcating periodic solutions with small amplitude in Theorem \ref{thm2}(2) are unstable if (a) $H_{22}>0$ and stable if (b) $H_{22}<0$. Stability properties of the trivial solutions are reversed. We depict only two half-branches and all other periodic solutions are obtained from these two half-branches via phase shifts.}\label{fig:1mems}
\end{figure}

Clearly, the \emph{Principle of Exchange of Stability} still holds
 for this kind of degenerate Hopf bifurcation. Since adjacent solutions have different stability properties,
it brings the convenience of deriving the stability properties of bifurcation solutions directly from those of trivial solutions, and vice versa.

We also obtain the following useful result, which makes the meaning of $H_{22}$ more clear.
\begin{cor}\label{cor:h22}
Under the conditions of Theorem \ref{thm2},	$H_{22}=-\rea\mu^{\prime\prime}(\lambda_0)$.
\end{cor}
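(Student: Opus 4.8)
The plan is to obtain $\mu''(\lambda_0)$ explicitly by differentiating the perturbed eigenvalue relation \eqref{eq:perteigen} twice in $\lambda$ and projecting via $\langle\,\cdot\,,\varphi^{\ast}_0\rangle$, and then to match the outcome term by term with the definition of $H_{22}$ in \eqref{eq:HH}. First I would fix regularity and normalization. Writing $A(\lambda)=D_xF(0,\lambda)$, condition (F1$'$) makes $\lambda\mapsto A(\lambda)$ a $C^3$ family, and since $i\kappa_0$ is algebraically simple by (F3) one may realize \eqref{eq:perteigen} as the solution of the augmented $C^3$ system $A(\lambda)\varphi=\mu\varphi$, $\langle\varphi,\varphi^{\ast}_0\rangle=1$ via the Implicit Function Theorem; hence $\mu(\cdot)$ and $\varphi(\cdot)$ are $C^2$ (indeed $C^3$) near $\lambda_0$ and satisfy $\langle\varphi(\lambda),\varphi^{\ast}_0\rangle\equiv1$, so that $\langle\varphi'(\lambda_0),\varphi^{\ast}_0\rangle=0$.

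Next I would differentiate $A(\lambda)\varphi(\lambda)=\mu(\lambda)\varphi(\lambda)$ once and evaluate at $\lambda_0$, using $A(\lambda_0)=A_0$, $A'(\lambda_0)=D_{x\lambda}^2F(0,\lambda_0)$, $\mu(\lambda_0)=i\kappa_0$ and $\varphi(\lambda_0)=\varphi_0$, to get
\[
(i\kappa_0I-A_0)\varphi'(\lambda_0)=D_{x\lambda}^2F(0,\lambda_0)[\varphi_0]-\mu'(\lambda_0)\varphi_0 .
\]
Pairing with $\varphi^{\ast}_0$ annihilates the left-hand side because its range lies in $R(i\kappa_0I-A_0)$, and yields $\mu'(\lambda_0)=\langle D_{x\lambda}^2F(0,\lambda_0)[\varphi_0],\varphi^{\ast}_0\rangle$ (consistent with $\dot\kappa(0)$ in \eqref{eq:tangent1}). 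Consequently the right-hand side above equals $w:=D_{x\lambda}^2F(0,\lambda_0)[\varphi_0]-\langle D_{x\lambda}^2F(0,\lambda_0)[\varphi_0],\varphi^{\ast}_0\rangle\varphi_0\in R(i\kappa_0I-A_0)$, and inverting on $R(i\kappa_0I-A_0)\cap X$ together with $\langle\varphi'(\lambda_0),\varphi^{\ast}_0\rangle=0$ gives $\varphi'(\lambda_0)=(i\kappa_0I-A_0)^{-1}w$.

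Then I would differentiate a second time; at $\lambda_0$, with $A''(\lambda_0)=D_{x\lambda\lambda}^3F(0,\lambda_0)$, rearrangement produces
\[
(i\kappa_0I-A_0)\varphi''(\lambda_0)=D_{x\lambda\lambda}^3F(0,\lambda_0)[\varphi_0]+2D_{x\lambda}^2F(0,\lambda_0)[\varphi'(\lambda_0)]-\mu''(\lambda_0)\varphi_0-2\mu'(\lambda_0)\varphi'(\lambda_0).
\]
Pairing with $\varphi^{\ast}_0$ again kills the left-hand side and, using $\langle\varphi_0,\varphi^{\ast}_0\rangle=1$ and $\langle\varphi'(\lambda_0),\varphi^{\ast}_0\rangle=0$, solves for
\[
\mu''(\lambda_0)=\langle D_{x\lambda\lambda}^3F(0,\lambda_0)[\varphi_0],\varphi^{\ast}_0\rangle+2\big\langle D_{x\lambda}^2F(0,\lambda_0)\big[(i\kappa_0I-A_0)^{-1}w\big],\varphi^{\ast}_0\big\rangle .
\]
Comparing this expression with the definition of $H_{22}$ in \eqref{eq:HH}, the quantity inside the outer pairing there is exactly $-\mu''(\lambda_0)$, so $H_{22}=\rea(-\mu''(\lambda_0))=-\rea\mu''(\lambda_0)$, as claimed.

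The computation itself is routine once the eigenvalue relation is differentiated; the only delicate point, and the place I expect the real work to lie, is the normalization ambiguity of $\varphi(\lambda)$. I resolve it above by fixing $\langle\varphi(\lambda),\varphi^{\ast}_0\rangle\equiv1$, but as a robustness check one may keep a general normalization and write $\varphi'(\lambda_0)=(i\kappa_0I-A_0)^{-1}w+s\varphi_0$ with $s=\langle\varphi'(\lambda_0),\varphi^{\ast}_0\rangle$; then the contribution $2s\langle D_{x\lambda}^2F(0,\lambda_0)[\varphi_0],\varphi^{\ast}_0\rangle=2s\mu'(\lambda_0)$ arising from the $D_{x\lambda}^2F$ term is cancelled exactly by $-2\mu'(\lambda_0)\langle\varphi'(\lambda_0),\varphi^{\ast}_0\rangle=-2s\mu'(\lambda_0)$. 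The main obstacle is thus bookkeeping rather than analysis: one must check that this $\varphi_0$-component drops out, which is precisely what guarantees that the manifestly normalization-free number $\rea\mu''(\lambda_0)$ coincides with the specific representative $(i\kappa_0I-A_0)^{-1}w$ appearing in $H_{22}$.
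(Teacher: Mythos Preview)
Your argument is correct. You differentiate the perturbed eigenvalue relation $A(\lambda)\varphi(\lambda)=\mu(\lambda)\varphi(\lambda)$ twice, project against $\varphi_0^\ast$, and read off $\mu''(\lambda_0)$ in a form that matches $-H_{22}$ termwise; the normalization $\langle\varphi(\lambda),\varphi_0^\ast\rangle\equiv1$ pins down $\varphi'(\lambda_0)=(i\kappa_0I-A_0)^{-1}w$ and your robustness check shows the $\varphi_0$-component would cancel in any case.

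The paper takes a genuinely different route. Rather than differentiating \eqref{eq:perteigen} directly in the space $X$, it embeds the trivial branch into the $2\pi$-periodic framework: from \eqref{eq:perteigen} it derives the identity
\[
D_xG(0,\kappa_0,\lambda)[\hat v_2+\hat w_2(\lambda)]=-\rea\mu(\lambda)\,(\hat v_2+\hat w_2(\lambda))+(\kappa_0-\im\mu(\lambda))\,(\hat v_1+\hat w_1(\lambda)),
\]
which has the same structure as the Floquet equation \eqref{c1}. Mimicking the proof of \eqref{c1.4} then gives $-\rea\mu''(\lambda_0)=\rea D^3_{r\lambda\lambda}\hat\Phi(0,\kappa_0,\lambda_0)$, and Lemma~\ref{lem:3.1} identifies this with $H_{22}$. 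So the paper passes through the bifurcation function $\hat\Phi$ and Lemma~\ref{lem:3.1}, whereas you bypass that machinery entirely. Your approach is shorter and self-contained, requiring only \eqref{eq:perteigen} and the definition \eqref{eq:HH}; the paper's approach has the virtue of making the parallel between $\mu_2(r)$ on the bifurcating branch and $\mu(\lambda)$ on the trivial branch explicit, which is exactly what is needed for the exchange-of-stability statement in Theorem~\ref{thm:stable2}.
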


Now, we know that conditions (F4$^{\prime}$) and $H_{22}\neq 0$ actually mean that the eigenvalue $\mu(\lambda)$ goes through the imaginary axis at $\lambda_0$  with ``vanishing speed'' but ``nonvanishing acceleration''. 
This relation also enables to simplify the calculation of $H_{22}$ in many applications  by using $-\rea\mu^{\prime\prime}(\lambda_0)$ instead of \eqref{eq:HH}$_2$. 

\begin{rem}
	Theorems \ref{thm2} and \ref{thm:stable2} are consistent with the known results proved in \cite{Kielhoefer2012} under the analyticity of $F$.
	Our results recover those of the case $m=2$ in \cite[(I.17.28) and below]{Kielhoefer2012} and \cite[Theorem I.17.4]{Kielhoefer2012}.
	The advantage is that no assumptions of the analyticity are required.
	Although we here do not study the higher-order degenerate cases (i.e., $m>2$). However, our theorems actually deal with the most important and commonly used case outside the previous Theorems \ref{thm1} and \ref{thm:stable1}.
\end{rem}

Let us illustrate the above theorems by a simple example of ODE system.
\begin{example}\label{eg:simu}
Consider periodic solutions of the following problem
\begin{equation}\label{eq:simple}
\frac{\mathrm{d}}{\mathrm{d}t}
\begin{pmatrix}
x_1(t)\\x_2(t)
\end{pmatrix}=
\begin{pmatrix}
\beta(\lambda)&1\\
-1&\beta(\lambda)
\end{pmatrix}
\begin{pmatrix}
x_1(t)\\x_2(t)
\end{pmatrix}+\begin{pmatrix}
x_1^3(t)\\x_2^3(t)
\end{pmatrix},
\end{equation}
where $x_1(\cdot),x_2(\cdot)\in C^1(\mathbb{R},\mathbb{R})$, and $\beta(\cdot)\in C^2(\mathbb{R},\mathbb{R})$ satisfying $\beta(\lambda_0)=0$.
\end{example}
Set $x=(x_1,x_2)^T$. Clearly, $(x,\lambda)=(0,\lambda)$ is the trivial solution of \eqref{eq:simple} for all $\lambda \in \mathbb{R}$.
Denote
\begin{equation*}
F(x,\lambda)=
\begin{pmatrix}
\beta(\lambda)&1\\
-1&\beta(\lambda)
\end{pmatrix}
\begin{pmatrix}
x_1\\x_2
\end{pmatrix}+
\begin{pmatrix}
x_1^3\\x_2^3
\end{pmatrix}.
\end{equation*}
Take $X=Z=\mathbb{R}^2$.
Then $A=D_xF(0,\lambda)=\begin{small}
\begin{pmatrix}
\beta(\lambda)&1\\
-1&\beta(\lambda)
\end{pmatrix}:
\end{small}
\mathbb{R}^2\rightarrow\mathbb{R}^2$.
The eigenvalues of $A$ are $\beta(\lambda)\pm i$. So $\pm i$ are simple eigenvalues of $A_0=D_xF(0,\lambda_0)=\begin{small}
\begin{pmatrix}
  & 1\\
 -1 &
\end{pmatrix}
\end{small}$ with eigenvectors
$\varphi_0,\overline{\varphi}_0=\frac{1}{\sqrt{2}}(
1,\pm i)^T$,  respectively. Moreover, the unique perturbed eigenvalues $\mu(\lambda)$ of $D_xF(0,\lambda)$ near $\pm i$ are also $\mu(\lambda)=\beta(\lambda)\pm i$, respectively. Clearly, $\varphi^{\ast}_0=\varphi_0$ and $\overline{\varphi}^{\ast}_0=\overline{\varphi}_0$, associated with the inner product in $\mathbb{C}^2$ as the bounded linear functional.
Here, $\mathbb{C}^2$ is the complexified space of $\mathbb{R}^2$.
The associated semigroup is $x(t)=e^{A_0t}$, which is compact for $t>0$.
Since
\begin{equation*}
D_{xx}^2F(0,\lambda_0)\Big[\begin{pmatrix}
a_1\\
a_2
\end{pmatrix}, \begin{pmatrix}
b_1\\
b_2
\end{pmatrix}\Big]=
\begin{pmatrix}
0\\0
\end{pmatrix},
\; D_{xxx}^3F(0,\lambda_0)\Big[\begin{pmatrix}
a_1\\
a_2
\end{pmatrix}, \begin{pmatrix}
b_1\\
b_2
\end{pmatrix}, \begin{pmatrix}
c_1\\
c_2
\end{pmatrix}\Big]=
\begin{pmatrix}
6 a_1  b_1  c_1\\
6 a_2  b_2  c_2
\end{pmatrix},
\end{equation*}
it follows from the expression \eqref{eq:HH} that
\begin{equation*}
\begin{split}
H_{11}=\rea\langle -D_{xxx}^3F(0,\lambda_0)[\varphi_0,\varphi_0,\overline{\varphi}_0], \varphi^{\ast}_0\rangle
=-\frac{3}{2}\rea\left\langle
\begin{pmatrix}
1\cdot 1\cdot 1\\
i\cdot i\cdot (-i)
\end{pmatrix},
\begin{pmatrix}
1\\
i
\end{pmatrix}
\right\rangle
=-3<0.
\end{split}
\end{equation*}

Now we apply Theorem \ref{thm2} and \ref{thm:stable2} to analyze several specific cases of $\beta(\lambda)$ in \eqref{eq:simple}.
\begin{enumerate}[(1)]
  \item Let $\beta(\lambda)=\lambda-\lambda_0$. Since $\rea\mu^{\prime}(\lambda_0)=\beta^{\prime}(\lambda_0)=1\ne 0$, it follows that condition (F4) holds.
      By Theorem \ref{thm1},
      a nondegenerate  Hopf bifurcation happens at $(0,\lambda_0)$. See the left of Fig.\ref{fig:simulation} for numerical simulations of the bifurcation curve and some limit cycles (periodic solutions) near $\lambda_0=0$.
  \item Let $\beta(\lambda)=(\lambda-\lambda_0)^2$. Since $\rea\mu^{\prime}(\lambda_0)=\beta^{\prime}(\lambda_0)=0$ and $H_{22}=-\rea\mu^{\prime\prime}(\lambda_0)=-\beta^{\prime\prime}(\lambda_0)=-2<0$,  it follows that $\det H_0=6>0$. Since $H_0$ is definite, Theorem \ref{thm2}(1) implies that for periodic solutions near $(0,\lambda_0)$, problem \eqref{eq:simple} admits only the trivial solution line.
  \item Let $\beta(\lambda)=-(\lambda-\lambda_0)^2$. Since $\rea\mu^{\prime}(\lambda_0)=\beta^{\prime}(\lambda_0)=0$ and $H_{22}=-\rea\mu^{\prime\prime}(\lambda_0)=-\beta^{\prime\prime}(\lambda_0)=2>0$.
      it follows that $\det H_0=-6<0$. Since (F4$^{\prime}$) holds and $H_0$ is indefinite,  it follows from Theorem \ref{thm2}(2) that a degenerate Hopf bifurcation happens, i.e.,
      apart from the trivial solution line, there exists a continuously differentiable curve $\{(x(r), \lambda(r))\}$ of $2 \pi / \kappa(r)$-period through $(x(0), \lambda(0))=\left(0, \lambda_{0}\right)$ with $\kappa(0)=1$ in $C_{2 \pi / \kappa(r)}^{1}(\mathbb{R}, \mathbb{R}^2) \times \mathbb{R}$.
      Every other periodic solution of \eqref{a1} near $\left(0, \lambda_{0}\right)$ is obtained from $\{(x(r), \lambda(r))\}$ by a phase shift $S_{\theta} x(r)$.
       Moreover, by Theorem \ref{thm:stable2}, the bifurcating periodic solution $\{(x(r), \lambda(r))\}$  is unstable and the trivial solution is stable; see Figure \ref{fig:1mems}(a). Also, see the right of Figure \ref{fig:simulation} for numerical simulations of the bifurcation curve and some of periodic solutions (limit cycles) near $\lambda_0=0$. \hfill $\Box$
\end{enumerate}
\begin{figure} 
\centering
\includegraphics[totalheight=2.2in]{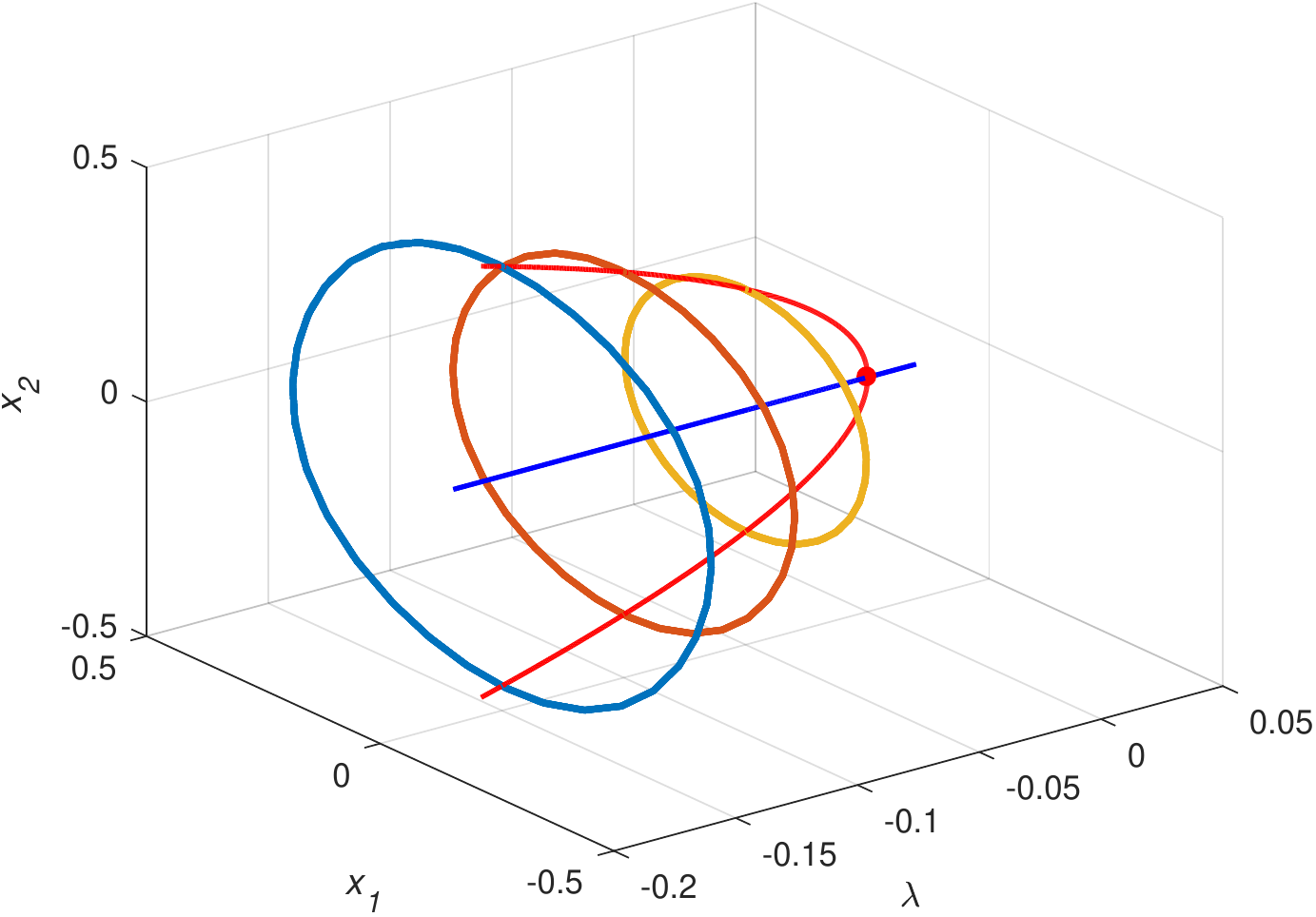}
\includegraphics[totalheight=2.2in]{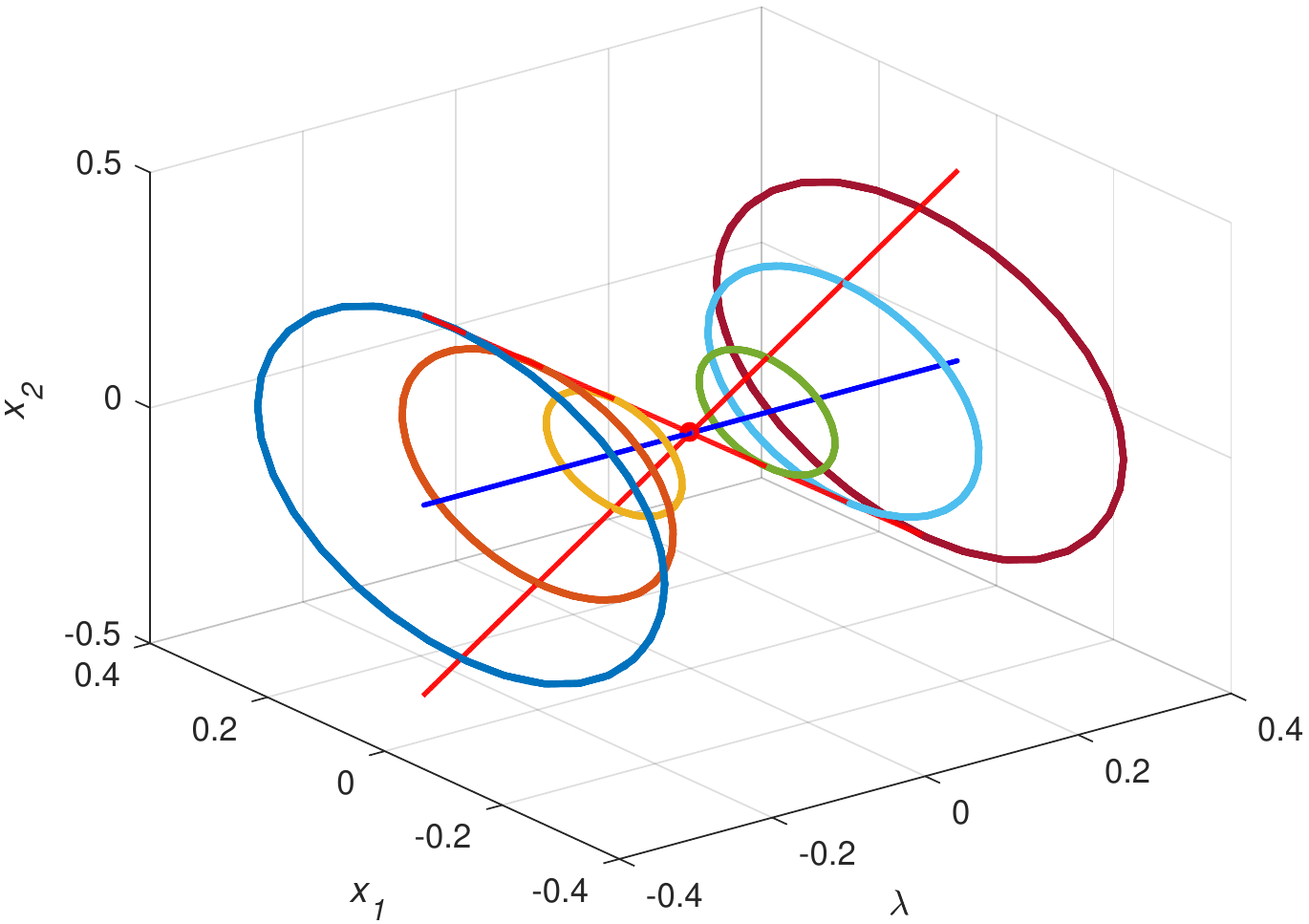}
\caption{Numerical simulations for nondegenerate and degenerate Hopf bifurcations in Example \ref{eg:simu}(1) and (3), respectively. Here, $\lambda_0=0$, and for each given small $r>0$ and $\lambda$ nearby $\lambda_0$, there exists a periodic solution (limit cycle) with amplitude $r$.
Left: nondegenerate Hopf bifurcation, periodic solutions for $\lambda=-0.15,-0.1,-0.05$.
Right: degenerate Hopf bifurcation, periodic solutions for $\lambda=-0.3,-0.2,-0.1,0.1,0.2,0.3$.} \label{fig:simulation}
\end{figure}

We are mainly conerned with bifurcation problems in infinite dimensions. Applications of Theorems \ref{thm2} and \ref{thm:stable2} to PDEs will be given in Section 5.

\section{Proof of Theorem \ref{thm2}}\label{sec:3}
In this section, we prove the degenerate Hopf bifurcation theorem --- Theorem \ref{thm2}.
Before proceeding, we recall some known results on Hopf bifurcation of problem \eqref{a1} from Kielh\"{o}fer \cite[I.8]{Kielhoefer2012} for the reader's convenience. Our proof will be based on those results, and use the same notations (except for $\varphi_0^*$ instead of $\varphi_0^\prime$ therein to avoid confusion with the derivative).

\subsection{Preliminaries}
Since $x=x(t)$ is a solution of $\kappa \frac{\mathrm{d}x}{ \mathrm{d} t}=F(x,\lambda)$ with $2\pi$-period if and only if $\tilde{x}=x(\kappa t)$ is a solution of \eqref{a1} with $2\pi/\kappa$-period,
it is equivalent to studying the $2\pi$-period solution of the equation
\begin{equation}\label{b1}
G(x,\kappa,\lambda)\equiv \kappa \frac{\mathrm{d}x}{ \mathrm{d} t}-F(x,\lambda)=0\quad \hbox{near } (0, \kappa_{0}, \lambda_{0}),
\end{equation}
where $G:\tilde{U}\times\tilde{V}\to W$, $
0\in \tilde{U}\subset E\cap Y\text{ and }(\kappa_0,\lambda_0)\in\tilde{V}\subset\mathbb{R}^2$ are open neighborhoods.
Here, Banach spaces $E\equiv C_{2\pi}^\alpha(\mathbb{R},X)$, $W\equiv C_{2\pi}^\alpha(\mathbb{R},Z)$, and $Y\equiv C_{2\pi}^{1+\alpha}(\mathbb{R},Z)$.
It is known that
$E \cap Y$ is a Banach space with norm $\|x\|_{X,\alpha}+\|\frac{\mathrm{d}x}{\mathrm{d} t}\|_{Z,\alpha}$ and is continuously embedded into $W$.
Moreover, $F \in C^{3}(U \times V, Z)$ by (F1) implies that $G \in C^{2}(\tilde{U} \times \tilde{V}, W)$.

Under (F3), (F5) and (F6), it follows from \cite[Proposition I.8.1]{Kielhoefer2012} that the linear operator
$J_{0} \equiv D_{x}G(0, \kappa_{0}, \lambda_{0})=\kappa_{0} \frac{\mathrm{d}}{ \mathrm{d} t}-A_{0}: E \cap Y \rightarrow W$
is continuous and is a Fredholm operator of index zero, with $\dim N(J_{0})=2$. Moreover,
\begin{equation}\label{eq:span}
 W=R(J_{0})\oplus N(J_{0})\quad  \hbox{ and } \quad N(J_{0})=\{c \varphi_{0} e^{i t}+\overline{c}\,\overline{\varphi}_{0} e^{-i t} \mid c \in \mathbb{C}\}.
\end{equation}
The Lyapunov-Schmidt reduction is applicable to $G$ at $(0, \kappa_{0}, \lambda_{0})$ under the decomposition \eqref{eq:span}.
Then \eqref{b1} is reduced to the following form
\begin{equation}\label{eq:redu}
 Q G(v+\psi(v, \kappa, \lambda), \kappa, \lambda)=0,\quad v=Px,
\end{equation}
where the projections
\begin{align}\label{Q}
&Q: W \rightarrow N(J_{0})(=R(Q))\quad \hbox{ along } R(J_{0})(=N(Q)),\\ \nonumber
&P=Q|_{E \cap Y}: E \cap Y \rightarrow N(J_{0})\quad \hbox{ along } R(J_{0})\cap(E \cap Y),
\end{align}
and $\psi(v,\kappa,\lambda)$ is determined locally by the Implicit Function Theorem, solving $(I-Q) G(Px+(I-P)x, \kappa, \lambda)=0$ for $(I-P)x$ on $R(J_{0})$.
In addition, 
\begin{align}\label{eq:psi0}
\begin{split}
&{D}_v\psi(0,\kappa_0,\lambda_0)=0 \text{ and}\\
&\psi(0,\kappa,\lambda)=D_{\kappa}\psi(0,\kappa,\lambda)=D_{\lambda}\psi(0,\kappa,\lambda)=D_{\kappa\lambda}\psi (0,\kappa,\lambda)=0\quad
\text{ near }(\kappa_0,\lambda_0).
\end{split}
\end{align}
Moreover, for each real $z(t)\in W$, the projection $Q$ has an expression
\begin{equation}\label{eq:Q}
(Qz)(t)=\frac{1}{2\pi}\Big(\int_0^{2\pi}\langle z(t),\psi^{\ast}_0(t)\rangle  \mathrm{d} t \Big) \psi_0(t)+\frac{1}{2\pi}\Big(\int_0^{2\pi}\langle z(t),\overline{\psi}{}^{\ast}_{0}(t)\rangle  \mathrm{d} t \Big) \overline{\psi}_0(t),
\end{equation}
where $\psi_0$ and $\psi^{\ast}_0$ are given as in \eqref{eq:phi}. A real function $Px\in N(J_0)$ has the following form
$$(Px)(t) =c\varphi_{0} e^{i t}+\overline{c}\,\overline{\varphi}_{0} e^{-i t}, \quad c \in \mathbb{C}.$$
Then \eqref{eq:Q} implies that  in view of \eqref{eq:span},
for a real function $Px\in N(J_{0})$, two-dimensional system \eqref{eq:redu} is equivalent to the one-dimensional complex equation
\begin{equation}\label{b4}
\hat{\Phi}(c,\kappa,\lambda)\equiv\frac{1}{2\pi}\int_0^{2\pi}\langle G(Px+\psi(Px,\kappa,\lambda),\kappa,\lambda),\psi^{\ast}_0\rangle  \mathrm{d} t=0.
\end{equation}
In order to solve the equation
\begin{align*} 
\hat{\Phi}(c,\kappa,\lambda)=0, \quad\text{ where }
  \hat{\Phi}:\tilde{U}_2\times\tilde{V}_2\to \mathbb{C},\;
0\in \tilde{U}_2\subset \mathbb{C} \text{ and }(\kappa_0,\lambda_0)\in\tilde{V}\subset\mathbb{R}^2,
\end{align*}
by $S^{1}$ equivariance of $\hat{\Phi}$ (cf. \cite[(I.8.36)]{Kielhoefer2012}), i.e.,
\begin{equation*} 
	\hat{\Phi}(e^{i\theta}c,\kappa,\lambda)=e^{i\theta}\hat{\Phi}(c,\kappa,\lambda), \quad \theta\in [0,2\pi),
\end{equation*}
it suffices to solve
\begin{equation}\label{eq:bifeq}
 \hat{\Phi}(r, \kappa, \lambda)=0, \quad r \in(-\delta, \delta) \subset \mathbb{R}, \; (\kappa, \lambda) \in \tilde{V}_{2} \subset \mathbb{R}^{2},
\end{equation}
 which is referred to as the \emph{bifurcation equation for Hopf bifurcation}. Now, we can write
 $Px$ in the form 
 $$(Px)(t)=r(\varphi_0 e^{it}+\overline{\varphi}_0e^{-it}).$$ 

Equation \eqref{eq:bifeq} admits a trivial solution, i.e., $\hat{\Phi}(0, \kappa, \lambda)=0$ for all $(\kappa, \lambda) \in \tilde{V}_{2}$.
In order to remove the trivial solution, it is convenient to set 
$$\tilde{\Phi}(r, \kappa, \lambda)=\hat{\Phi}(r, \kappa, \lambda)/r\quad \text{ for } r\neq 0$$ and investigate the equation
\begin{equation} \label{b0}
\tilde{\Phi}(r, \kappa, \lambda)\equiv\int_0^1{D}_r\hat{\Phi}(\tau r,\kappa,\lambda)\mathrm{d} \tau=0,\quad r \in(-\delta, \delta).
\end{equation}
Direct computation yields
\begin{align}\label{eq:matrix1}
\begin{split}
   &  \tilde{\Phi}\left(0, \kappa_{0}, \lambda_{0}\right)=0,\\
   & {D}_{(\kappa, \lambda)} \tilde{\Phi}\left(0, \kappa_{0}, \lambda_{0}\right)=\left(\begin{array}{ll}
0 & -\rea \left\langle {D}_{x \lambda}^{2} F\left(0, \lambda_{0}\right) \varphi_{0}, \varphi^{\ast}_0\right\rangle \\
1 & -\im \left\langle {D}_{x \lambda}^{2} F\left(0, \lambda_{0}\right) \varphi_{0}, \varphi^{\ast}_0\right\rangle
\end{array}\right).
\end{split}
\end{align}
This implies that if the following\emph{ transversality} condition holds:
\begin{enumerate}[(F4$^{\prime\prime}$)]
\item $\rea\left\langle {D}_{x \lambda}^{2} F\left(0, \lambda_{0}\right) \varphi_{0}, \varphi^{\ast}_0\right\rangle \neq 0,$
\end{enumerate}
then \eqref{b0} can be solved by the Implicit Function Theorem. 
Notice that (F4$^{\prime\prime}$) is equivalent to the \emph{nondegeneracy} condition (F4). Indeed, differentiating \eqref{eq:perteigen} with respect to $\lambda$ at $\lambda_0$ yields (\text{cf.} \cite[\text{(I.8.44)}$_4$]{Kielhoefer2012})
\begin{equation}\label{eq:equivalent}
\mu^{\prime}(\lambda_0)=\langle {D}^2_{x\lambda}F(0,\lambda_0)\varphi_0,\varphi^{\ast}_0\rangle.
\end{equation}
Thus, Theorem \ref{thm1} can be proved, as shown in \cite[I.8]{Kielhoefer2012}

However, the present paper deals with problem \eqref{a1} under condition (F4$^{\prime}$). That is, the nondegeneracy (F4) is violated. 
The Implicit Function Theorem now is no longer directly applicable to \eqref{eq:matrix1}. Therefore, we cannot obtain Theorem \ref{thm2} in the same way as Theorem \ref{thm1}.
We will use a Morse lemma to prove the theorem of the degenerate Hopf bifurcation.

\subsection{Lemmas}

To prove the theorem, we need two lemmas. Following \cite{Kielhoefer2012}, we introduce several notations of the real vectors (cf. \eqref{eq:phi})
\begin{equation}\label{eq:vv}
\begin{split}
&\hat{v}_{1}\equiv i\left(\psi_{0}-\overline{\psi}_{0}\right)=-2\im \psi_{0}, \quad \hat{v}_{2}\equiv \psi_{0}+\overline{\psi}_{0}=2\rea \psi_{0}, \\
&\hat{v}^{\ast}_{1}\equiv -\frac{i}{2}\left(\psi^{\ast}_{0}
-\overline{\psi}{}^{\ast}_{0}\right)=\im \psi^{\ast}_{0}, \quad \hat{v}^{\ast}_{2}\equiv \frac{1}{2}\left(\psi^{\ast}_{0}
+\overline{\psi}{}^{\ast}_{0}\right)=\rea \psi^{\ast}_{0}.
\end{split}
\end{equation}

The following lemma from Liu et al. \cite[Lemma 2.5]{Liu2007}, where the result was proved by the classic Morse Lemma and the invariant manifold theory.

\begin{lem}[\cite{Liu2007}]\label{lem:morse}
	Suppose that $(x_0,y_0)\in\mathbb{R}^2$ and $U$ is a neighborhood of $(x_0,y_0)$. Assume that $f: U\rightarrow\mathbb{R}$ is a $C^p$ function for $p\geqslant 2$, $f(x_0,y_0)=0$, ${D}  f(x_0,y_0)=0$, and the Hessian matrix $H={D} ^2f(x_0,y_0)$ is nondegenerate. Then
	\noindent
	\begin{enumerate}[(1)]
		\item If $H$ is definite (i.e., $\det H >0$), then $(x_0,y_0)$ is the unique zero point of $f(x,y)=0$ near $(x_0,y_0)$.
		\item If $H$ is indefinite (i.e., $\det H <0$), then there exist two $C^{p-1}$ curves $(x_j(t),y_j(t))$, $j=1,2$, $t\in(-\delta,\delta)$, such that the solution set of $f(x,y)=0$ consists of exactly the two curves near $(x_0,y_0)$, $(x_j(0),y_j(0))=(x_0,y_0)$. Moreover, t can be rescaled and indices can be rearranged so that $(x_1^{\prime}(0),y_1^{\prime}(0))$ and $(x_2^{\prime}(0),y_2^{\prime}(0))$ are two linear independent solutions of
		\be\label{b23}
		f_{xx}(x_0,y_0)\eta^2+2f_{xy}(x_0,y_0)\eta\tau+f_{yy}(x_0,y_0)\tau^2=0.
		\ee
	\end{enumerate}
\end{lem}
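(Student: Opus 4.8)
The plan is to treat the definite and indefinite cases separately, combining the classical Morse normal form (to pin down the global structure of the zero set) with the stable manifold theorem applied to an auxiliary flow (to obtain the sharp $C^{p-1}$ regularity and the tangent directions). Since $f(x_0,y_0)=0$, $Df(x_0,y_0)=0$ and $H=D^2f(x_0,y_0)$ is nondegenerate, the classical Morse lemma yields a local $C^{p-2}$ diffeomorphism $\Phi$ with $\Phi(x_0,y_0)=0$ such that, after a linear rescaling, $f\circ\Phi^{-1}$ is one of the normal forms $\pm(u^2+v^2)$ (definite case) or $u^2-v^2$ (indefinite case). In the definite case the zero set of the normal form is the single point $\{0\}$, and pulling back gives assertion (1). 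In the indefinite case the zero set of $u^2-v^2$ is two transversal lines through the origin, so the zero set of $f$ near $(x_0,y_0)$ is exactly two curves crossing at $(x_0,y_0)$ and nothing else; this already settles the \emph{count} and the \emph{exhaustion}, but only with $C^{p-2}$ regularity.

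The key to the improvement is to upgrade the regularity and compute the tangents by introducing the Hamiltonian vector field $X_f:=(-f_y,f_x)=J\nabla f$, where $J=\begin{pmatrix}0&-1\\1&0\end{pmatrix}$. Since $f\in C^p$ we have $X_f\in C^{p-1}$, and $X_f$ preserves $f$ because $X_f\cdot\nabla f=0$. The point $(x_0,y_0)$ is a fixed point of $X_f$ with linearization $JH$, whose eigenvalues $\lambda$ satisfy $\operatorname{tr}(JH)=0$ and $\det(JH)=\det H$, hence $\lambda^2=-\det H$. In the indefinite case $\det H<0$, so $\lambda=\pm\sqrt{-\det H}$ are real and nonzero and $(x_0,y_0)$ is a hyperbolic saddle. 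The stable manifold theorem then produces $C^{p-1}$ local stable and unstable manifolds $W^s_{loc},W^u_{loc}$ through $(x_0,y_0)$. Because $f$ is constant along orbits of $X_f$ and equals $f(x_0,y_0)=0$ at the fixed point, both manifolds lie in $\{f=0\}$; matching them with the two curves found from the Morse chart identifies the zero set near $(x_0,y_0)$ with $W^s_{loc}\cup W^u_{loc}$, two genuinely $C^{p-1}$ curves.

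For the tangent directions, if $w=(\eta,\tau)$ is an eigenvector of $JH$ with eigenvalue $\lambda$, then $Hw=-\lambda Jw$ gives $w^{\top}Hw=-\lambda\,w^{\top}Jw=0$ by the antisymmetry of $J$, i.e.
$$f_{xx}(x_0,y_0)\eta^2+2f_{xy}(x_0,y_0)\eta\tau+f_{yy}(x_0,y_0)\tau^2=0,$$
which is precisely \eqref{b23}. The two eigendirections belong to the distinct eigenvalues $\pm\sqrt{-\det H}$ and are therefore linearly independent; they are the tangents to $W^u_{loc}$ and $W^s_{loc}$, and after rescaling the parameter and relabeling $j=1,2$ they become $(x_1^{\prime}(0),y_1^{\prime}(0))$ and $(x_2^{\prime}(0),y_2^{\prime}(0))$.

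I expect the main obstacle to be the regularity bookkeeping, namely reconciling the two descriptions of the zero set. The classical Morse lemma furnishes only a $C^{p-2}$ chart (a two-derivative loss) and so by itself does not give the asserted $C^{p-1}$ smoothness, while the stable manifold theorem furnishes $C^{p-1}$ curves but a priori does not guarantee they \emph{exhaust} $\{f=0\}$. The crux is therefore to argue that $W^s_{loc}\cup W^u_{loc}$ accounts for every nearby zero; this follows from the topological two-lines picture supplied by the Morse chart together with the strict sign of $f$ in the open sectors between $W^s_{loc}$ and $W^u_{loc}$ (equivalently, monotonicity of $f$ transverse to the invariant manifolds). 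The remaining verifications — nonvanishing and distinctness of the eigenvalues $\pm\sqrt{-\det H}$, and linear independence of the two eigendirections — are immediate from $\det H<0$.
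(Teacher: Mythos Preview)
Your proposal is correct and matches the approach the paper attributes to \cite{Liu2007}: the paper does not give its own proof of this lemma but states that in \cite{Liu2007} ``the result was proved by the classic Morse Lemma and the invariant manifold theory,'' which is precisely your two-step strategy of using the Morse chart for the count and exhaustion of the zero set, then the stable/unstable manifold theorem for the Hamiltonian field $X_f=J\nabla f$ to upgrade the regularity to $C^{p-1}$ and read off the tangent directions from the eigenvectors of $JH$.
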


\begin{rem}
	As pointed out in \cite{Liu2007}, $C^{p-1}$ in Lemma \ref{lem:morse}(2) is the optimal regularity of the curves.
	In contrast, the known result of two $C^{p-2}$ curves was known earlier in Nirenberg \cite[Corollary  3.1.2]{Nirenberg2001}.
\end{rem}

The next result gives the 2nd and 3rd derivatives of $\hat{\Phi}$ at $(0,\kappa_0,\lambda_0)$ as well as the relations between $H_{ii}$ and $\hat{\Phi}$.
\begin{lem}\label{lem:3.1}
Let $\hat{\Phi}(r,\kappa,\lambda)$ be defined as in \eqref{b4} and $H_{11}, H_{22}$ as in \eqref{eq:HH}. Then
\begin{align*}
{D}^2_{rr}\hat{\Phi}(0,\kappa_0,\lambda_0)&=0,\quad {D}^2_{r\kappa}\hat{\Phi}(0,\kappa_0,\lambda_0)
=i,\quad {D}^2_{r\lambda}\hat{\Phi}(0,\kappa_0,\lambda_0)=-\langle D_{x\lambda}^2F(0,\lambda_0)[\varphi_0],\varphi^{\ast}_0\rangle,\\
{D}^3_{rr\kappa}\hat{\Phi}(0,\kappa_0,\lambda_0)&={D}^3_{rr\lambda}\hat{\Phi}(0,\kappa_0,\lambda_0)
={D}^3_{r\kappa\kappa}\hat{\Phi}(0,\kappa_0,\lambda_0)={D}^3_{r\kappa\lambda}\hat{\Phi}(0,\kappa_0,\lambda_0)=0,\\
{D}^3_{rrr}\hat{\Phi}(0,\kappa_0,\lambda_0)
&=3\big\langle-D_{xxx}^3F(0,\lambda_0)[\varphi_0,\varphi_0,\overline{\varphi}_0]
+2D_{xx}^2F(0,\lambda_0)\big[\varphi_0,A_0^{-1}D_{xx}^2F(0,\lambda_0)[\varphi_0,\overline{\varphi}_0]\big]\\
&\quad -D_{xx}^2F(0,\lambda_0)\big[\overline{\varphi}_0,(2i\kappa_0 I-A_0)^{-1}D_{xx}^2F(0,\lambda_0)[\varphi_0,\varphi_0]\big],\varphi^{\ast}_0\big\rangle,\\
{D}^3_{r\lambda\lambda}\hat{\Phi}(0,\kappa_0,\lambda_0)&=\big\langle-D_{x\lambda\lambda}^3F(0,\lambda_0)[\varphi_0]-2D_{x\lambda}^2F(0,\lambda_0)(i\kappa_0 I-A_0)^{-1}\big(D_{x\lambda}^2F(0,\lambda_0)[\varphi_0]\\
&\quad -\langle D_{x\lambda}^2F(0,\lambda_0)[\varphi_0],\varphi^{\ast}_0\rangle\varphi_0\big),\varphi^{\ast}_0\big\rangle.
\end{align*}
Moreover,
$$H_{11}=\frac{1}{3}\rea{D}^3_{rrr}\hat{\Phi}(0,\kappa_0,\lambda_0)\quad \hbox{and }\quad H_{22}=\rea{D}^3_{r\lambda\lambda}\hat{\Phi}(0,\kappa_0,\lambda_0). $$
\end{lem}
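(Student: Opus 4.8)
The plan is to compute the listed derivatives by differentiating the defining integral \eqref{b4} for $\hat\Phi$ directly, exploiting the structure of the Lyapunov--Schmidt map $\psi$ recorded in \eqref{eq:psi0}. Throughout I would write $G(x,\kappa,\lambda)=\kappa\,\mathrm{d}x/\mathrm{d}t-F(x,\lambda)$ and $w(r,\kappa,\lambda)=Px+\psi(Px,\kappa,\lambda)$, where $(Px)(t)=r(\varphi_0 e^{it}+\overline\varphi_0 e^{-it})$ so that $D_r(Px)=\varphi_0 e^{it}+\overline\varphi_0 e^{-it}=\hat v_2$. Then $\hat\Phi(r,\kappa,\lambda)=\tfrac{1}{2\pi}\int_0^{2\pi}\langle G(w,\kappa,\lambda),\psi_0^*\rangle\,\mathrm{d}t$, and every derivative is obtained by the chain rule together with the expansions of $\psi$ near the origin. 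The crucial simplifications all come from \eqref{eq:psi0}: since $\psi(0,\kappa,\lambda)=D_\kappa\psi=D_\lambda\psi=D_{\kappa\lambda}\psi=0$ and $D_v\psi(0,\kappa_0,\lambda_0)=0$, most mixed terms vanish and the surviving contributions involve only the low-order Fréchet derivatives of $F$.

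First I would establish the second-order derivatives. For $D^2_{rr}\hat\Phi(0,\kappa_0,\lambda_0)$, differentiating twice in $r$ produces a term in $D^2_{xx}G$ evaluated on $(\hat v_2,\hat v_2)$ plus a term with $D_xG\cdot D^2_{rr}w$; the first pairs $D^2_{xx}F(0,\lambda_0)[\hat v_2,\hat v_2]$ against $\psi_0^*$, and because $\kappa_0\,\mathrm{d}/\mathrm{d}t-A_0$ annihilates its kernel after projection, a short Fourier-mode argument (only the $e^{\pm it}$ modes survive the integral against $\psi_0^*=\varphi_0^*e^{-it}$, and $D^2_{xx}F$ is quadratic so it feeds the $0$ and $\pm 2i$ modes) forces the result to be $0$. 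For $D^2_{r\kappa}$ the only surviving piece is $\langle D_\kappa(\kappa\,\mathrm{d}/\mathrm{d}t)\hat v_2,\psi_0^*\rangle=\langle \mathrm{d}\hat v_2/\mathrm{d}t,\psi_0^*\rangle$, and since $\mathrm{d}(\varphi_0 e^{it})/\mathrm{d}t=i\varphi_0 e^{it}$ with $\langle\varphi_0,\varphi_0^*\rangle=1$ this integrates to $i$; the conjugate mode integrates to $0$. For $D^2_{r\lambda}$ the surviving piece is $-\langle D^2_{x\lambda}F(0,\lambda_0)\hat v_2,\psi_0^*\rangle$, and only the $e^{it}$ component of $\hat v_2$ contributes, yielding $-\langle D^2_{x\lambda}F(0,\lambda_0)[\varphi_0],\varphi_0^*\rangle$.

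Next I would treat the third-order derivatives. The vanishing of $D^3_{rr\kappa}$, $D^3_{rr\lambda}$, $D^3_{r\kappa\kappa}$, $D^3_{r\kappa\lambda}$ follows from the same mode-selection mechanism combined with \eqref{eq:psi0}: whenever a $\kappa$- or single $\lambda$-derivative hits the operator or $\psi$, either $D_\kappa\psi=D_\lambda\psi=D_{\kappa\lambda}\psi=0$ kills the term or the surviving quadratic-in-$F$ piece lands on the wrong Fourier mode. The two substantive computations are $D^3_{rrr}$ and $D^3_{r\lambda\lambda}$. For $D^3_{rrr}$ I would need the second-order Taylor coefficient of $\psi$ in $r$; solving $(I-Q)G(Px+\psi,\kappa_0,\lambda_0)=0$ to second order on $R(J_0)$ produces the two inhomogeneous linear problems whose solutions involve $A_0^{-1}D^2_{xx}F(0,\lambda_0)[\varphi_0,\overline\varphi_0]$ (the zero Fourier mode) and $(2i\kappa_0 I-A_0)^{-1}D^2_{xx}F(0,\lambda_0)[\varphi_0,\varphi_0]$ (the $\pm 2i$ mode). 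Feeding these back through $D^3_{xxx}F$ and the bilinear $D^2_{xx}F$ terms, then pairing against $\psi_0^*$, reproduces exactly the stated expression with the factor $3$. The computation of $D^3_{r\lambda\lambda}$ is analogous but uses the first $\lambda$-variation of $\psi$ at nonzero $r$, which is governed by $(i\kappa_0 I-A_0)^{-1}$ acting on the projected $D^2_{x\lambda}F(0,\lambda_0)[\varphi_0]$; the projection subtracts the $\langle\cdot,\varphi_0^*\rangle\varphi_0$ component, which is precisely the bracketed correction in the formula. Finally, comparing the real parts of $D^3_{rrr}\hat\Phi$ and $D^3_{r\lambda\lambda}\hat\Phi$ with the definitions \eqref{eq:HH} gives $H_{11}=\tfrac13\rea D^3_{rrr}\hat\Phi(0,\kappa_0,\lambda_0)$ and $H_{22}=\rea D^3_{r\lambda\lambda}\hat\Phi(0,\kappa_0,\lambda_0)$.

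The main obstacle is the bookkeeping in the $D^3_{rrr}$ step: one must carefully extract and invert the correct Fourier modes of the second-order part of $\psi$, keep track of the invertibility of $A_0$ on the zero mode and of $2i\kappa_0 I-A_0$ on the second mode (both guaranteed by (F5), the nonresonance condition), and combine the cubic and the two quadratic-composed-with-$\psi$ contributions without sign or combinatorial errors. All of this is a direct, if lengthy, calculation, and the structural inputs from \eqref{eq:psi0} and the mode-orthogonality of $\psi_0^*$ keep the number of surviving terms small.
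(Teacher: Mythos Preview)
Your proposal is correct and follows essentially the same route as the paper: chain-rule differentiation of \eqref{b4}, use of \eqref{eq:psi0} and the Fourier-mode orthogonality $\int_0^{2\pi}e^{nit}\,\mathrm{d}t=0$ to kill terms, explicit computation of $D^2_{vv}\psi^0[\hat v_2,\hat v_2]$ and $D^2_{v\lambda}\psi^0[\hat v_2]$ by differentiating the implicit relation $(I-Q)G(v+\psi,\kappa,\lambda)=0$, and substitution back into the third-order derivatives. One small point to make explicit when you write it out: the vanishing of $D^3_{rr\lambda}\hat\Phi(0,\kappa_0,\lambda_0)$ is not killed purely by \eqref{eq:psi0} --- it needs the fact that $D^2_{v\lambda}\psi^0[\hat v_2]$ carries only the $e^{\pm it}$ modes (your formula for it shows this), so that its product with $\hat v_2$ inside $D^2_{xx}F$ lands on modes $0,\pm2$ and integrates to zero against $\psi_0^*$.
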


\begin{proof}[{\bf Proof. }]
In view of \eqref{eq:vv}, write $Px(t)=r(\psi_{0}+\overline{\psi}_{0})=r{\hat{v}_{2}}(t)$.
From \eqref{b4}, we obtain that
\begin{equation}\label{eq:psifirst}
{D}_r\hat{\Phi}(r,\kappa,\lambda)=\frac{1}{2\pi}\int_0^{2\pi}\langle D_xG(r{\hat{v}_{2}}+\psi(r{\hat{v}_{2}},\kappa,\lambda),\kappa,\lambda)\big[{\hat{v}_{2}}+D_v\psi[{\hat{v}_{2}}]\big],\psi^{\ast}_0\rangle  \mathrm{d} t.
\end{equation}
Differentiating ${D}_r\hat{\Phi}$ with respect to $r,\kappa$ and $\lambda$ yield
\begin{equation}
\begin{split}
{D}^2_{rr}\hat{\Phi}(r,\kappa,\lambda)=&\frac{1}{2\pi}\int_0^{2\pi}\langle D_{xx}^2G(r{\hat{v}_{2}}+\psi(r{\hat{v}_{2}},\kappa,\lambda),\kappa,\lambda)\big[\hat{v}_{2}+D_v\psi[\hat{v}_{2}],\hat{v}_{2}+D_v\psi[\hat{v}_{2}]\big]\\
&+D_xG(r{\hat{v}_{2}}+\psi(r{\hat{v}_{2}},\kappa,\lambda),\kappa,\lambda)\big[D_{vv}^2\psi[\hat{v}_{2},\hat{v}_{2}]\big],\psi^{\ast}_0\rangle  \mathrm{d} t,\\
{D}^2_{r\kappa}\hat{\Phi}(r,\kappa,\lambda)=&\frac{1}{2\pi}\int_0^{2\pi}\langle D_{xx}^2G(r{\hat{v}_{2}}+\psi(r{\hat{v}_{2}},\kappa,\lambda),\kappa,\lambda)\big[D_{\kappa}\psi,{\hat{v}_{2}}+D_v\psi[{\hat{v}_{2}}]\big]\\
&+D_{x\kappa}^2G(r{\hat{v}_{2}}+\psi(r{\hat{v}_{2}},\kappa,\lambda),\kappa,\lambda)\big[{\hat{v}_{2}}+D_v\psi[{\hat{v}_{2}}]\big]\\
&+D_xG(r{\hat{v}_{2}}+\psi(r{\hat{v}_{2}},\kappa,\lambda),\kappa,\lambda)\big[D_{v\kappa}^2\psi[{\hat{v}_{2}}]\big],\psi^{\ast}_0\rangle  \mathrm{d} t,\\
{D}^2_{r\lambda}\hat{\Phi}(r,\kappa,\lambda)=&\frac{1}{2\pi}\int_0^{2\pi}\langle D_{xx}^2G(r{\hat{v}_{2}}+\psi(r{\hat{v}_{2}},\kappa,\lambda),\kappa,\lambda)\big[D_{\lambda}\psi,{\hat{v}_{2}}+D_v\psi[{\hat{v}_{2}}]\big]\\
&+D_{x\lambda}^2G(r{\hat{v}_{2}}+\psi(r{\hat{v}_{2}},\kappa,\lambda),\kappa,\lambda)\big[{\hat{v}_{2}}+D_v\psi[{\hat{v}_{2}}]\big]\\
&+D_xG(r{\hat{v}_{2}}+\psi(r{\hat{v}_{2}},\kappa,\lambda),\kappa,\lambda)\big[D_{v\lambda}^2\psi[{\hat{v}_{2}}]\big],\psi^{\ast}_0\rangle  \mathrm{d} t.\label{b6}\\
\end{split}
\end{equation}

Since
\begin{equation}\label{eq:equality}
\int_0^{2\pi} e^{nit} \mathrm{d} t=0\  (n\ne 0)  \hbox {  and  } \int_0^{2\pi}\langle x,\psi^{\ast}_0\rangle  \mathrm{d} t=0\quad
\hbox{for } x\in R(I-Q)=R(J_0),
\end{equation}
it follows from \eqref{eq:psi0} and \eqref{b6}${}_1$ that $${D}^2_{rr}\hat{\Phi}(0,\kappa_0,\lambda_0)=0.$$
Moreover,  it follows from \eqref{eq:psi0} and \eqref{b6}${}_2$ that
\begin{equation*}
\begin{split}
{D}^2_{r\kappa}\hat{\Phi}(0,\kappa_0,\lambda_0)&=\frac{1}{2\pi}\int_0^{2\pi}\langle D_{x\kappa}^2G(0,\kappa_0,\lambda_0)[{\hat{v}_{2}}],\psi^{\ast}_0\rangle  \mathrm{d} t\\
&=\frac{1}{2\pi}\int_0^{2\pi}\langle \frac{\mathrm{d}\hat{v}_{2}}{ \mathrm{d} t},\psi^{\ast}_0\rangle  \mathrm{d} t
=\frac{1}{2\pi}\int_0^{2\pi}\langle i\varphi_0,\varphi^{\ast}_0\rangle  \mathrm{d} t=i\langle \varphi_0,\varphi^{\ast}_0\rangle=i.
\end{split}
\end{equation*}
Also,  we obtain from \eqref{eq:psi0} and  \eqref{b6}${}_3$ that
\begin{equation*}
\begin{split}
{D}^2_{r\lambda}\hat{\Phi}(0,\kappa_0,\lambda_0)&=\frac{1}{2\pi}\int_0^{2\pi}\langle D_{x\lambda}^2G(0,\kappa_0,\lambda_0)[{\hat{v}_{2}}],\psi^{\ast}_0\rangle  \mathrm{d} t\\
&=\frac{1}{2\pi}\int_0^{2\pi}\langle -D_{x\lambda}^2F(0,\lambda_0)[{\hat{v}_{2}}],\psi^{\ast}_0\rangle  \mathrm{d} t=-\langle D_{x\lambda}^2F(0,\lambda_0)[\varphi_0],\varphi^{\ast}_0\rangle.
\end{split}
\end{equation*}

Before proceeding to the 3rd derivatives of $\hat{\Phi}$,
we need to calculate the 2nd derivatives of $\psi$ at $(0,\kappa_0,\lambda_0)$.
With \eqref{b1} and \eqref{eq:redu} in mind,
differentiating the equation $(I-Q)G(v+\psi(v,\kappa,\lambda),\kappa,\lambda)=0$ with respect to $v$ once and twice yield
\begin{align}
&(I-Q)D_xG(v+\psi(v,\kappa,\lambda),\kappa,\lambda)[I+D_v\psi]=0,\label{b9}\\\nonumber
&(I-Q)D_{xx}^2G(v+\psi(v,\kappa,\lambda),\kappa,\lambda)[I+D_v\psi,I+D_v\psi]\\
&\qquad\qquad\qquad\qquad\qquad+(I-Q)D_xG(v+\psi(v,\kappa,\lambda),\kappa,\lambda)[D_{vv}^2\psi]=0.\label{b10}
\end{align}
By \eqref{eq:psi0}, acting \eqref{b10} on $[\hat{v}_2,\hat{v}_2]$ at $(0,\kappa_0,\lambda_0)$ gives
\begin{equation*}
(I-Q)D_{xx}^2G(0,\kappa_0,\lambda_0)[\hat{v}_{2},\hat{v}_{2}]+(I-Q)J_0\big[D_{vv}^2\psi(0,\kappa_0,\lambda_0)[\hat{v}_2,\hat{v}_2]\big]=0.
\end{equation*}
Then we have
\begin{equation}
\begin{split}
J_0\big[D_{vv}^2\psi(0,\kappa_0,\lambda_0)[\hat{v}_{2},\hat{v}_{2}]\big]    &  =-(I-Q)D_{xx}^2G(0,\kappa_0,\lambda_0)[\hat{v}_{2},\hat{v}_{2}]\\
& =(I-Q)D_{xx}^2F(0,\kappa_0,\lambda_0)[\hat{v}_{2},\hat{v}_{2}].
\end{split}\label{b11}
\end{equation}
Since $z_0e^{\pm nit}\in R(J_0)$, solving $J_0x=z_0e^{\pm nit}$  for $x\in (I-P)(E \cap Y)$ yields
\begin{equation}
x=(\pm ni\kappa_0I-A_0)^{-1}z_0e^{\pm nit}\qquad \text{for } n\neq 1.\label{b12}
\end{equation}
Then \eqref{b11} and \eqref{b12} imply
\begin{equation}\label{b12.1}
\begin{split}
D_{vv}^2\psi(0,\kappa_0,&\lambda_0)[{\hat{v}_{2}},{\hat{v}_{2}}]=J_0^{-1}\big[D_{xx}^2F(0,\lambda_0)[\varphi_0,\varphi_0]e^{2it}\big]+2J_0^{-1}\big[D_{xx}^2F(0,\lambda_0)[\varphi_0,\overline{\varphi}_0]\big]\\
&\qquad\qquad\quad+J_0^{-1}\big[D_{xx}^2F(0,\lambda_0)[\overline{\varphi}_0,\overline{\varphi}_0]e^{-2it}\big]\\
=&(2i\kappa_0 I-A_0)^{-1}D_{xx}^2F(0,\lambda_0)[\varphi_0,\varphi_0]e^{2it}+2(-A_0)^{-1}D_{xx}^2F(0,\lambda_0)[\varphi_0,\overline{\varphi}_0]\\
&+(-2i\kappa_0 I-A_0)^{-1}D_{xx}^2F(0,\kappa_0)[\overline{\varphi}_0,\overline{\varphi}_0]e^{-2it}.
\end{split}
\end{equation}

Also, differentiating \eqref{b9} in $\kappa$ yields
\begin{equation*}
\begin{split}
&(I-Q)\Big\{D_{xx}^2G(v+\psi(v,\kappa,\lambda),\kappa,\lambda)[D_{\kappa}\psi,I+D_v\psi]+D_{x\kappa}^2G(v+\psi(v,\kappa,\lambda),\kappa,\lambda)[I+D_v\psi]
\\
&\qquad +D_xG(v+\psi(v,\kappa,\lambda),\kappa,\lambda)[D_{v\kappa}^2\psi]\Big\}=0.
\end{split}
\end{equation*}
By \eqref{eq:psi0}, acting the above equation on $\hat{v}_2$ at $(0,\kappa_0,\lambda_0)$ gives
\begin{equation*}
(I-Q)D_{x\kappa}^2G(0,\kappa_0,\lambda_0)[{\hat{v}_{2}}]+J_0[D_{v\kappa}^2\psi(0,\kappa_0,\lambda_0)[{\hat{v}_{2}}]]=0.
\end{equation*}
Since $\frac{\mathrm{d}\hat{v}_{2}}{ \mathrm{d} t}=i(\varphi_0e^{it}-\overline{\varphi}_0e^{-it})\in N(J_0)$ and $I-Q$ is the projection to $R(J_0)$ along $N(J_0)$, we have
\begin{equation}\label{b16}
J_0\big[D_{v\kappa}^2\psi(0,\kappa_0,\lambda_0)[{\hat{v}_{2}}]\big]=-(I-Q)D_{x\kappa}^2G(0,\kappa_0,\lambda_0)[{\hat{v}_{2}}]=-(I-Q)\frac{\mathrm{d}\hat{v}_{2}}{ \mathrm{d} t}=0.
\end{equation}
Noticing that $D_{v\kappa}^2\psi[{\hat{v}_{2}}]\in (I-P)(E \cap Y)$, we get
\begin{equation}
D_{v\kappa}^2\psi(0,\kappa_0,\lambda_0)[{\hat{v}_{2}}]=0.\label{b17}
\end{equation}

Similarly, differentiating \eqref{b9} in $\lambda$ yields
\begin{equation*}
\begin{split}
(I-Q)&\Big\{D_{xx}^2G(v+\psi(v,\kappa,\lambda),\kappa,\lambda)[D_{\lambda}\psi,I+D_v\psi]\\
&+D_{x\lambda}^2G(v+\psi(v,\kappa,\lambda),\kappa,\lambda)[I+D_v\psi] +D_{x}G(v+\psi(v,\kappa,\lambda),\kappa,\lambda)[D_{v\lambda}^2\psi]\Big\}=0.
\end{split}
\end{equation*}
By \eqref{eq:psi0}, acting  the above equation on $\hat{v}_2$ at $(0,\kappa_0,\lambda_0)$ gives
\begin{equation*}
(I-Q)D_{x\lambda}^2G(0,\kappa_0,\lambda_0)[{\hat{v}_{2}}]+J_0\big[D_{v\lambda}^2\psi(0,\kappa_0,\lambda_0)[{\hat{v}_{2}}]\big]=0.
\end{equation*}
Thus,
\begin{equation}
J_0\big[D_{v\lambda}^2\psi(0,\kappa_0,\lambda_0)[{\hat{v}_{2}}]\big]=-(I-Q)D_{x\lambda}^2G(0,\kappa_0,\lambda_0)[{\hat{v}_{2}}]=(I-Q)D_{x\lambda}^2F(0,\lambda_0)[{\hat{v}_{2}}].\label{b18}
\end{equation}
It follows from \eqref{eq:Q} that
\begin{equation*}
\begin{split}
&Q(D_{x\lambda}^2F(0,\lambda_0)[\varphi_0e^{it}])\\
=&\frac{1}{2\pi}\Big(\int_0^{2\pi}\!\langle D_{x\lambda}^2F[\varphi_0e^{it}],\varphi^{\ast}_0e^{-it} \mathrm{d} t\rangle\Big) \varphi_0e^{it}+\frac{1}{2\pi}\Big(\int_0^{2\pi}\!\langle D_{x\lambda}^2F[\varphi_0e^{it}],\overline{\varphi}^{\ast}_0e^{it}\rangle  \mathrm{d} t\Big)\overline{\varphi}_0e^{-it}\\
=&\langle D_{x\lambda}^2F[\varphi_0],\varphi^{\ast}_0\rangle \varphi_0e^{it},
\end{split}
\end{equation*}
then
\begin{equation}
\begin{split}
(I-Q)D_{x\lambda}^2F(0,\lambda_0)[\varphi_0e^{it}]&=D_{x\lambda}^2F(0,\lambda_0)[\varphi_0]e^{it}-\langle D_{x\lambda}^2F(0,\lambda_0)[\varphi_0],\varphi^{\ast}_0\rangle \varphi_0e^{it}\\
&=\big(D_{x\lambda}^2F(0,\lambda_0)[\varphi_0]-\langle D_{x\lambda}^2F(0,\lambda_0)[\varphi_0],\varphi^{\ast}_0\rangle \varphi_0\big)e^{it}.\label{b19}
\end{split}
\end{equation}
%
Solving $J_0x=(I-Q)D_{x\lambda}^2F(0,\lambda_0)[\varphi_0]e^{it}$ for $x\in (I-P)(E \cap Y)$ gives
\begin{equation}
x=(i\kappa_0 I-A_0)^{-1}\left(D_{x\lambda}^2F(0,\lambda_0)[\varphi_0]-\langle D_{x\lambda}^2F(0,\lambda_0)[\varphi_0],\varphi_0^*\rangle\varphi_0\right)e^{it},\label{b21}
\end{equation}
and it is analogous when replacing $\varphi_0 e^{it}$ with $\overline{\varphi}_0 e^{-it}$.
From \eqref{b18}--\eqref{b21}, it follows that
\begin{equation}
\begin{split}
D_{v\lambda}^2\psi(0,\kappa_0,\lambda_0)[&{\hat{v}_{2}}]=(i\kappa_0 I-A_0)^{-1}\left(D_{x\lambda}^2F(0,\lambda_0)[\varphi_0]-\langle D_{x\lambda}^2F(0,\lambda_0)[\varphi_0],\varphi^{\ast}_0\rangle\varphi_0\right)e^{it}\\
&+(-i\kappa_0 I-A_0)^{-1}\left(D_{x\lambda}^2F(0,\lambda_0)[\overline{\varphi}_0]-\langle D_{x\lambda}^2F(0\lambda_0)[\overline{\varphi}_0],\varphi^{\ast}_0\rangle\overline{\varphi}_0\right)e^{-it}.\label{b22}
\end{split}
\end{equation}
Here the equality holds because the right hand side of \eqref{b22} is 
the solution of $J_0[x]=(I-Q)D_{x\lambda}^2F(0,\lambda_0)[{\hat{v}_{2}}]$ in $(I-P)(E\cap Y)$, which is the complement of $N(J_0)$, and so does the left hand side of \eqref{b22}.


We now continue to calculate the 3rd derivatives of $\hat{\Phi}$ at $(0,\kappa_0,\lambda_0)$. 
Differentiating \eqref{b6}$_1$ in $r$ at $(0,\kappa_0,\lambda_0)$ yields
\begin{align*}
{D}^3_{rrr}\hat{\Phi}(0,\kappa_0,\lambda_0)
=&\frac{1}{2\pi}\int_0^{2\pi}\langle D_{xxx}^3G^0[\hat{v}_{2},\hat{v}_{2},\hat{v}_{2}]
+3D_{xx}^2G^0\big[{\hat{v}_{2}},D_{vv}^2\psi^0[\hat{v}_{2},\hat{v}_{2}] \big]\\
&+J_0\big[D_{vvv}^3\psi^0[\hat{v}_{2},\hat{v}_{2},\hat{v}_{2}]\big],\varphi^{\ast}_0e^{-it}\rangle  \,\mathrm{d} t.\label{p1}
\end{align*}
Here and in what follows, the superscript `${}^0$' means the evaluation of the function at $(0,\kappa_0,\lambda_0)$ for brevity.
Using \eqref{eq:equality} and \eqref{b12.1}, we further get that
\begin{equation*}
\begin{split}
{D}^3_{rrr}\hat{\Phi}&(0,\kappa_0,\lambda_0)=\frac{1}{2\pi}\int_0^{2\pi}\big\langle -3D_{xxx}^3F(0,\lambda_0)[\varphi_0,\varphi_0,\overline{\varphi}_0]\\
&\qquad\qquad\quad-3D_{xx}^2F(0,\lambda_0)\big[\overline{\varphi}_0,(2i\kappa_0 I-A_0)^{-1}D_{xx}^2F(0,\lambda_0)[\varphi_0,\varphi_0]\big]\\
&\qquad\qquad\quad-6D_{xx}^2F(0,\lambda_0)\big[\varphi_0,(-A_0)^{-1}D_{xx}^2F(0,\lambda_0)[\varphi_0,\overline{\varphi}_0]\big],\varphi^{\ast}_0\big\rangle  \,\mathrm{d} t\\
=&\big\langle-3D_{xxx}^3F(0,\lambda_0)[\varphi_0,\varphi_0,\overline{\varphi}_0]
-3D_{xx}^2F(0,\lambda_0)\big[\overline{\varphi}_0,(2i\kappa I-A_0)^{-1}D_{xx}^2F(0,\lambda_0)[\varphi_0,\varphi_0]\big]\\
&\qquad\qquad\quad-6D_{xx}^2F(0,\lambda_0)\big[\varphi_0,(-A_0)^{-1}D_{xx}^2F(0,\lambda_0)[\varphi_0,\overline{\varphi}_0]\big],\varphi^{\ast}_0\big\rangle.\label{p1.1}
\end{split}
\end{equation*}

Differentiating \eqref{b6}$_1$ in $\kappa$ at $(0,\kappa_0,\lambda_0)$ yields
\begin{equation*}
\begin{split}
{D}^3_{rr\kappa}\hat{\Phi}(0,\kappa_0,\lambda_0)=&\frac{1}{2\pi}\int_0^{2\pi}\big\langle D_{xxx}^3G^0[D_{\kappa}\psi^0,{\hat{v}_{2}},{\hat{v}_{2}}]+D_{xx\kappa}^3G^0[\hat{v}_{2},\hat{v}_{2}]\\
&+2D_{xx}^2G^0\big[{\hat{v}_{2}},D_{v\kappa}^2\psi^0[{\hat{v}_{2}}]\big]+D_{xx}^2G^0\big[D_{\kappa}\psi^0,D_{vv}^2\psi^0[\hat{v}_{2},\hat{v}_{2}]\big]\\
&+D_{x\kappa}^2G^0\big[D_{vv}^2\psi^0[\hat{v}_{2},\hat{v}_{2}]\big]+J_0\big[D_{vv\kappa}^3\psi^0[\hat{v}_{2},\hat{v}_{2}]\big],\varphi^{\ast}_0e^{-it}\big\rangle  \,\mathrm{d} t.\label{p2}
\end{split}
\end{equation*}
Since $D_{xx\kappa}^3G(0,\kappa_0,\lambda_0)= 0$, it follows from \eqref{eq:psi0}, \eqref{eq:equality}, \eqref{b12.1} and \eqref{b17} that
\begin{equation}
{D}^3_{rr\kappa}\hat{\Phi}(0,\kappa_0,\lambda_0)=0.\label{p2.1}
\end{equation}

Differentiating \eqref{b6}$_1$ in $\lambda$ at $(0,\kappa_0,\lambda_0)$ yields
\begin{equation*}
\begin{split}
{D}^3_{rr\lambda}\hat{\Phi}(0,\kappa_0,\lambda_0)=&\frac{1}{2\pi}\int_0^{2\pi}\big\langle D_{xxx}^3G^0[D_{\lambda}\psi^0,{\hat{v}_{2}},{\hat{v}_{2}}]+D_{xx\lambda}^3G^0[\hat{v}_{2},\hat{v}_{2}]\\
&+2D_{xx}^2G^0\big[{\hat{v}_{2}},D_{v\lambda}^2\psi^0[{\hat{v}_{2}}]\big]+D_{xx}^2G^0\big[D_{\lambda}\psi^0,D_{vv}^2\psi^0[\hat{v}_{2},\hat{v}_{2}]\big]\\
&+D_{x\lambda}^2G^0\big[D_{vv}^2\psi^0[\hat{v}_{2},\hat{v}_{2}]\big]+J_0\big[D_{vv\lambda}^3\psi^0[\hat{v}_{2},\hat{v}_{2}]\big],\varphi^{\ast}_0e^{-it}\big\rangle  \,\mathrm{d} t.\label{p3}
\end{split}
\end{equation*}
Combining \eqref{eq:psi0}, \eqref{eq:equality}, \eqref{b12.1} and \eqref{b22},
we obtain from the above equation that
\begin{equation*}
{D}^3_{rr\lambda}\hat{\Phi}(0,\kappa_0,\lambda_0)=0.\label{p3.1}
\end{equation*}

Differentiating \eqref{b6}$_2$ in $\lambda$ at $(0,\kappa_0,\lambda_0)$, we obtain from \eqref{eq:psi0}, \eqref{b17} and $D_{x\kappa\lambda}^2G^0=0$ that
\begin{equation*}
{D}_{r\kappa\lambda}^3\hat{\Phi}(0,\kappa_0,\lambda_0)=\frac{1}{2\pi}\int_0^{2\pi}\langle D_{x\kappa}^2G^0\big[D_{v\lambda}^2\psi^0[\hat{v}_2]\big]+J_0\big[D_{v\kappa\lambda}^3\psi^0[\hat{v}_2]\big],\varphi^{\ast}_0e^{-it}\rangle \mathrm{d}t.
\end{equation*}
Since $D_{v\lambda}^2\psi^0[\hat{v}_2]\in R(J_0)$ and hence $\frac{\mathrm{d}}{\mathrm{d}t}D_{v\lambda}^2\psi^0[\hat{v}_2]\in R(J_0)$, \eqref{eq:equality} implies
\begin{equation*}
{D}^3_{r\kappa\lambda}\hat{\Phi}(0,\kappa_0,\lambda_0)=\frac{1}{2\pi}\int_0^{2\pi}\langle \frac{\mathrm{d}}{\mathrm{d}t}D_{v\lambda}^2\psi^0[\hat{v}_2]+J_0\big[D_{v\kappa\lambda}^3\psi^0[\hat{v}_2]\big],\varphi^{\ast}_0e^{-it} \rangle\,\mathrm{d}t=0.
\end{equation*}
Similarly, differentiating \eqref{b6}$_2$ in $\kappa$ at $(0,\kappa_0,\lambda_0)$ gives
\begin{equation*}
{D}^3_{r\kappa\kappa}\hat{\Phi}(0,\kappa_0,\lambda_0)=0
\end{equation*}

Differentiating \eqref{b6}$_3$ in $\lambda$ at $(0,\kappa_0,\lambda_0)$ yields
\begin{equation*}
\begin{split}\label{p6}
{D}^3_{r\lambda\lambda}\hat{\Phi}(0,\kappa_0,\lambda_0)=&\frac{1}{2\pi}\int_0^{2\pi}\big\langle \frac{\mathrm{d}}{\mathrm{d}\lambda}D_{xx}^2G(r\hat{v}_2
+\psi(r\hat{v}_2,\kappa,\lambda),\kappa,\lambda)\big|_{(r,\kappa,\lambda)
=(0,\kappa_0,\lambda_0)}[D_{\lambda}\psi^0,{\hat{v}_{2}}]\\
&+D_{xx}^2G^0[D_{\lambda\lambda}^2\psi^0,{\hat{v}_{2}}]
+2D_{xx}^2G^0\big[D_{\lambda}\psi^0,D_{v\lambda}^2\psi^0[{\hat{v}_{2}}]\big]
+D_{xx\lambda}^3G^0[D_{\lambda}\psi^0,\hat{v}_2]\\
&+D_{x\lambda\lambda}^3G^0[{\hat{v}_{2}}]
+2D_{x\lambda}^2G^0\big[D_{v\lambda}^2\psi^0[{\hat{v}_{2}}]\big]
+J_0\big[D_{v\lambda\lambda}^3\psi^0[{\hat{v}_{2}}]\big],\varphi^{\ast}_0e^{-it}\big\rangle  \,\mathrm{d} t.
\end{split}
\end{equation*}
From \eqref{eq:psi0}, \eqref{b22} and \eqref{eq:equality}, we obtain that
\begin{equation*}
\begin{split}
{D}^3_{r\lambda\lambda}\hat{\Phi}&(0,\kappa_0,\lambda_0)=\langle-D_{x\lambda\lambda}^3F(0,\lambda_0)[\varphi_0],\varphi^{\ast}_0\rangle\\
&+2\langle-D_{x\lambda}^2F(0,\lambda_0)(i\kappa_0 I-A_0)^{-1}\big(D_{x\lambda}^2F(0,\lambda_0)[\varphi_0]-\langle D_{x\lambda}^2F(0,\lambda_0)[\varphi_0],\varphi^{\ast}_0\rangle\varphi_0\big),\varphi^{\ast}_0\rangle,
\end{split}
\end{equation*}
which completes the proof.
\end{proof}

\subsection{Proof}

\begin{proof}[\textbf{Proof of Theorem \ref{thm2}.}]
To use Lemma \ref{lem:morse} to prove the theorem, we need to
reduce three variables ``$r,\kappa,\lambda$'' in equation \eqref{b0} to two.

From \eqref{b0}, we obtain that
\begin{equation}
\begin{split}
&D_r\tilde{\Phi}(r,\kappa,\lambda)=\int_0^1{D}^2_{rr}\hat{\Phi}(\tau r,\kappa,\lambda)\tau \,\mathrm{d} \tau,\\
&D_\kappa\tilde{\Phi}(r,\kappa,\lambda)=\int_0^1{D}^2_{r\kappa}\hat{\Phi}(\tau r,\kappa,\lambda)\,\mathrm{d} \tau,\\
&D_\lambda\tilde{\Phi}(r,\kappa,\lambda)=\int_0^1{D}^2_{r\lambda}\hat{\Phi}(\tau r,\kappa,\lambda)\,\mathrm{d} \tau.\label{b5}
\end{split}
\end{equation}
By Lemma \ref{lem:3.1}, we have
\begin{equation}
\begin{split}
D_r\tilde{\Phi}(0,\kappa_0,\lambda_0)&=\int_0^1{D}^2_{rr}\hat{\Phi}(0,\kappa_0,\lambda_0)\tau \,\mathrm{d} \tau=0,\\
D_\kappa\tilde{\Phi}(0,\kappa_0,\lambda_0)&=\int_0^1{D}^2_{r\kappa}\hat{\Phi}(0,\kappa_0,\lambda_0)\,\mathrm{d} \tau=i,\\
D_\lambda\tilde{\Phi}(0,\kappa_0,\lambda_0)&=\int_0^1{D}^2_{r\lambda}\hat{\Phi}(0,\kappa_0,\lambda_0)\,\mathrm{d} \tau=-\langle D_{x\lambda}^2F(0,\lambda_0)[\varphi_0],\varphi^{\ast}_0\rangle.\label{b7}
\end{split}
\end{equation}

Set
\begin{equation*}\label{b25}
g(r,\kappa,\lambda)=\rea \tilde{\Phi}(r,\kappa,\lambda),\quad\tilde{g}(r,\kappa,\lambda)=\im \tilde{\Phi}(r,\kappa,\lambda).
\end{equation*}
Clearly,
\begin{equation}\label{b27}
\tilde{\Phi}(r,\kappa,\lambda)=0\Leftrightarrow g(r,\kappa,\lambda)=0,\;\tilde{g}(r,\kappa,\lambda)=0.
\end{equation}
Then $\eqref{b7}_2$ implies that
\begin{equation}\label{eq:implicit}
D_{\kappa}\tilde{g}(0,\kappa_0,\lambda_0)=\im D_\kappa\tilde{\Phi}(0,\kappa_0,\lambda_0)=1\ne 0.
\end{equation}
Moreover, we obtain that $\tilde{g}(0,\kappa_0,\lambda_0)=g(0,\kappa_0,\lambda_0)=0$ by $\tilde{\Phi}(0,\kappa_0,\lambda_0)=0$.
So \eqref{eq:implicit} ensures that the equation $\tilde{g}(r,\kappa,\lambda)=0$ is solvable for $r\in(-\delta,\delta)$ and $\lambda\in(\lambda_0-\delta,\lambda_0+\delta)$ by the Implicit Function Theorem, which yields a continuously differentiable function $\kappa(r,\lambda)$ such that
\begin{equation}\label{kappa}
\tilde{g}(r,\kappa(r,\lambda),\lambda)=0\text{ and }\kappa(0,\lambda_0)=\kappa_0.
\end{equation}
Thus, we view $g$ as a function of two variables ``$r,\lambda$'' and set
 $$f(r,\lambda)\equiv g(r,\kappa(r,\lambda),\lambda).$$
Then $f(0,\lambda_0)=g(0,\kappa_0,\lambda_0)=0$. 

To solve the left equation in \eqref{b27}, it suffices to solve the equation
\begin{equation}
f(r,\lambda)=0.\label{b28}
\end{equation}
Then \eqref{b7} implies that
\begin{align}\nonumber
{D} \kappa(0,\lambda_0)&=\big(D_{r}\kappa (0,\lambda_0),D_{\lambda}\kappa (0,\lambda_0)\big{)}=\Big(-\frac{D_{r}\tilde g ^0}{D_{\kappa}\tilde g ^0},-\frac{D_{\lambda}\tilde g ^0}{D_{\kappa}\tilde g ^0}\Big)\\\label{b30}
&=\Big(-\frac{\im D_r\tilde{\Phi}^0}{\im D_\kappa\tilde{\Phi}^0},-\frac{\im D_\lambda\tilde{\Phi}^0}{\im D_\kappa\tilde{\Phi}^0}\Big) =\big(0,\im \langle D_{x\lambda}^2F(0,\lambda_0)[\varphi_0],\varphi^{\ast}_0\rangle\big).
\end{align}
Differentiating \eqref{b28} in $r$ and $\lambda$ yield
\begin{align}\nonumber
D_{r} f (r,\lambda)&=D_{r} g (r,\kappa,\lambda)+D_{\kappa} g (r,\kappa,\lambda)D_{r}\kappa(r,\lambda) \\\label{b31}
&=\rea D_r\tilde{\Phi}(r,\kappa,\lambda)+\rea D_\kappa\tilde{\Phi}(r,\kappa,\lambda)D_{r}\kappa(r,\lambda),\\\nonumber
D_{\lambda} f (r,\lambda)&=D_{\kappa} g (r,\kappa,\lambda)D_{\lambda}\kappa(r,\lambda) +D_{\lambda} g (r,\kappa,\lambda)\\\label{b32}
&=\rea D_\kappa\tilde{\Phi}(r,\kappa,\lambda)D_{\lambda}\kappa(r,\lambda) +\rea D_\lambda\tilde{\Phi}(r,\kappa,\lambda).
\end{align}
The above expressions at $(0,\lambda_0)$ become
\begin{equation}
\begin{split}
&D_{r} f (0,\lambda_0)=0\text{ and } D_{\lambda} f (0,\lambda_0)=-\rea \langle D_{x\lambda}^2F(0,\lambda_0)[\varphi_0],\varphi^{\ast}_0\rangle.\label{b33}
\end{split}
\end{equation}

If the degeneracy condition (F4$^{\prime}$) holds, i.e. 
\begin{equation}
\rea \langle D_{x\lambda}^2F(0,\lambda_0)[\varphi_0],\varphi^{\ast}_0\rangle=0,\label{t1}
\end{equation}
then the gradient of $f$ at $(0,\lambda_0)$ becomes
\begin{equation}
{D}  f(0,\lambda_0)=(0,0).\label{b34}
\end{equation}
We have to calculate the Hessian matrix $H={D} ^2f(0,\lambda_0)$.

Differentiating \eqref{b31} in $r$,
from \eqref{b5}, \eqref{b7}, \eqref{b30} and  Lemma \ref{lem:3.1}, we obtain
\begin{equation*}
\begin{split}
D_{rr}^2f(0,\lambda_0)
=&\rea D_{rr}\tilde{\Phi}^0 +2\rea D_{r\kappa}^2\tilde{\Phi} ^0 D_{r}\kappa(0,\lambda_0) +\rea D_{\kappa\kappa}^2\tilde{\Phi} ^0 \big(D_{r}\kappa (0,\lambda_0)\big)^2\\
&+\rea D_\kappa\tilde{\Phi}^0 D_{rr}\kappa(0,\lambda_0) \\
=&\rea D_{rr}\tilde{\Phi}^0
=\frac{1}{3}\rea {D}^3_{rrr}\hat{\Phi}^0
=H_{11}.\label{b35}
\end{split}
\end{equation*}
Similarly, differentiating \eqref{b31} in $\lambda$,  we obtain from \eqref{b5}, \eqref{b7}, \eqref{b30} and Lemma \ref{lem:3.1} that
\begin{equation*}
\begin{split}
D_{r\lambda}^2f (0,\lambda_0)
=&\rea D_{r\kappa}^2\tilde{\Phi} ^0 D_{\lambda}\kappa(0,\lambda_0) +\rea D_{r\lambda}^2\tilde{\Phi} ^0 +\rea D_{\kappa\kappa}^2\tilde{\Phi} ^0 D_{r}\kappa(0,\lambda_0) D_{\lambda}\kappa(0,\lambda_0)\\
& +\rea D_{\kappa\lambda}^2\tilde{\Phi} ^0 D_{r}\kappa(0,\lambda_0) +\rea D_\kappa\tilde{\Phi}^0 D_{r\lambda}^2\kappa(0,\lambda_0) \\
=&\rea D_{r\kappa}^2\tilde{\Phi} ^0 D_{\lambda}\kappa(0,\lambda_0) +\rea D_{r\lambda}^2\tilde{\Phi} ^0\\
=&\frac{1}{2}\rea {D}^3_{rr\kappa}\hat{\Phi}^0 D_{\lambda}\kappa(0,\lambda_0) +\frac{1}{2}\rea {D}^3_{rr\lambda}\hat{\Phi}^0
=0.\label{b36}
\end{split}
\end{equation*}
From \eqref{b7}, \eqref{b30} and Lemma \ref{lem:3.1} again, we have
\begin{equation*}
\begin{split}
D_{\lambda\lambda}^2f (0,\lambda_0)=&\rea D_{\kappa\kappa}^2\tilde{\Phi} ^0 \big(D_{\lambda}\kappa (0,\lambda_0)\big)^2+2\rea D_{\kappa\lambda}^2\tilde{\Phi} ^0 D_{\lambda}\kappa(0,\lambda_0)\\
&  +\rea \tilde{\Phi}_{\kappa}^0 D_{\lambda\lambda}^2\kappa(0,\lambda_0)+\rea D_{\lambda\lambda}^2\tilde{\Phi}^0 \\
=&\rea {D}^3_{r\kappa\kappa}\hat{\Phi}^0 \big(D_{\lambda}\kappa (0,\lambda_0)\big)^2+\rea {D}^3_{r\kappa\lambda}\hat{\Phi}^0  D_{\lambda}\kappa(0,\lambda_0) +\rea {D}^3_{r\lambda\lambda}\hat{\Phi}^0\\
=&\rea {D}^3_{r\lambda\lambda}\hat{\Phi}^0
=H_{22}.\label{b37}
\end{split}
\end{equation*}

In summary,
the above calculations yield
 $$H={D} ^2f(0,\lambda_0)=\begin{pmatrix}
D_{rr}^2f(0,\lambda_0) &D_{r\lambda}^2f(0,\lambda_0) \\D_{r\lambda}^2f(0,\lambda_0) &D_{\lambda\lambda}^2f(0,\lambda_0)
\end{pmatrix}
=
\begin{pmatrix}
H_{11}&0\\0&H_{22}
\end{pmatrix},$$
which implies that $\det H=H_{11}H_{22}.$

If $\det H=H_{11}H_{22}>0$, then Lemma \ref{lem:morse}(1) implies that $(0,\lambda_0)$ is the only solution of \eqref{b28} near $(0,\lambda_0)$, i.e., for the original problem \eqref{a1}, apart from the trivial solution line $\{(0,\lambda)|\lambda\in(\lambda_0-\delta,\lambda_0+\delta)\}$ that is given in condition (F2), there is no other solution near $(0,\lambda_0)$.

If $\det H=H_{11}H_{22}<0$, then Lemma \ref{lem:morse}(2) implies that the set of solutions of \eqref{b28} near $(0,\lambda_0)$ are composed of exactly two solution curves  $\{\left(r_i(s),\lambda_i(s)\right)|s\in(-\delta,\delta), (r_i^{\prime}(0),\lambda_i^{\prime}(0))=(\widetilde{\mu_i},\widetilde{\eta}_i)\}\ (i=1,2)$, 
in which $(\widetilde{\mu_1},\widetilde{\eta}_1)$ and $(\widetilde{\mu_2},\widetilde{\eta}_2)$ are two linear independent solutions of
\begin{equation}
H_{11}\widetilde{\mu}^2+H_{22}\widetilde{\eta}^2=0.\label{b38.1}
\end{equation}
Moreover, $\det H<0$ implies $r_i^{\prime}(0)=\widetilde{\mu}_i\ne 0$ for $i=1,2$. By the Inverse Function Theorem, the solutions $(r_i(s),\lambda_i(s)), i=1,2$ can be reparameterized as continuously differentiable curves $(r,\tilde{\lambda}_i(r)), i=1,2$.
Denote $\tilde{\lambda}$ still by $\lambda$, and $\frac{\mathrm{d}}{\mathrm{d}r}$ by dot `$\cdot$' for simplicity.  So
\begin{equation}\label{eq:lambdaD}
\dot{\lambda}_i(0)=\frac{\mathrm{d}}{\mathrm{d}r}\lambda(r)\big|_{r=0}=\eta_i\neq 0,
\end{equation}
in which $\eta_{1,2}=\pm\sqrt{-{H_{11}}/{H_{22}}}$ are the roots of
$$H_{11}+H_{22}\eta^2=0.$$
Insert $(r,{\lambda}_i(r))$ into \eqref{kappa} and set $\kappa_i(r)=\kappa(r,\lambda_i(r))$. Then from \eqref{b30} we get
\begin{equation}\label{eq:kappaD}
\dot{\kappa}_i(0)=D_{r}\kappa (0,\lambda_0)+D_{\lambda}\kappa (0,\lambda_0)\dot{\lambda}_i(0)=\im \langle D_{x\lambda}^2F(0,\lambda_0)[\varphi_0],\varphi^{\ast}_0\rangle\eta_i,\quad i=1,2.
\end{equation} 
Since $x_i(r)=r\hat{v}_2+\psi(r\hat{v}_2,\kappa_i(r),\lambda_i(r))$, it follows that
\begin{equation}\label{eq:xD}
\dot{x}_i(0)=\hat{v}_2+D_v\psi^0[\hat{v}_2]+D_{\kappa} \psi ^0\dot{\kappa}_i(0)+D_{\lambda} \psi ^0\dot{\lambda}_i(0)=\hat{v}_2=2\rea (\varphi_0 e^{i t}),\quad i=1,2.
\end{equation} 

Furthermore, because of the $S^1$-equivariance of $\hat{\Phi}$ (cf. \cite[(I.8.36)]{Kielhoefer2012}), i.e. $\hat{\Phi}(e^{i\theta}c,\kappa,\lambda)=e^{i\theta}\hat{\Phi}(c,\kappa,\lambda)$, we have $\hat{\Phi}\big(-r,\kappa,\lambda\big)=-\hat{\Phi}\big(r,\kappa,\lambda\big).$ Then
$$\hat{\Phi}\big(r,\kappa_i(-r),\lambda_i(-r)\big)=-\hat{\Phi}\big(-r,\kappa_i(-r),\lambda_i(-r)\big)=0,\quad \hat{\Phi}\big(r,\kappa_j(r),\lambda_j(r)\big)=0.$$ 
Thus, we get three solutions: $(r,\kappa_i(r),\lambda_i(r))$, $(r,\kappa_j(r),\lambda_j(r))$ and $(r,\kappa_i(-r),\lambda_i(-r))$.
But Lemma \ref{lem:morse}(2) implies that there are only two solutions. Hence, two of them must be the same. Since the first two solutions are linearly independent, and
$\lambda_i(-r)\neq\lambda_i(r)$ for small $r$ by $\dot{\lambda}_i(0)\neq 0$, it follows that the latter two must be the same, i.e.,
\begin{equation}\label{eq:change}
\kappa_i(-r)=\kappa_j(r),\;\lambda_i(-r)=\lambda_j(r),\quad i,j=1,2, \quad i\ne j.
\end{equation}
Moreover, from \eqref{eq:change} and the $S^1$-equivariance of $\psi$ (cf. \cite[(I.8.32)]{Kielhoefer2012}), i.e., $S_{\theta}\psi(Px,\kappa,\lambda)=\psi(S_{\theta}Px,\kappa,\lambda)$, we obtain
\begin{equation}\label{eq:changeji}
\begin{split}
S_{\pi} x_j(r)=&S_{\pi}\left(r\hat{v}_2+\psi(r\hat{v}_2,\kappa_j(r),\lambda_j(r))\right)\\
=&S_{\pi}r(\varphi_0 e^{it}+\overline{\varphi}_0 e^{-it})+\psi\left(S_{\pi}r(\varphi_0 e^{it}+\overline{\varphi}_0 e^{-it}),\kappa_j(r),\lambda_j(r)\right)\\
=&-r(\varphi_0 e^{it}+\overline{\varphi}_0 e^{-it})+\psi\left(-r(\varphi_0 e^{it}+\overline{\varphi}_0 e^{-it}),\kappa_i(-r),\lambda_i(-r)\right)\\
=&x_i(-r),\quad i,j=1,2,\;i\neq j.\\
\end{split}
\end{equation}
Thus for $(r,\kappa_i(r),\lambda_i(r))$ and $(r,\kappa_j(r),\lambda_j(r))$,
each of these two local solution curves is actually the phase shift of the other,
which means that they can be regarded as the same because the $S^1$-equivariance of problem \eqref{b1}.

Finally, come back from \eqref{b1} to \eqref{a1} by rescaling the variable $\tilde{t}=t/\kappa(r)$, still denoting by $t$. Then apart from \eqref{eq:change}, we also obtain from \eqref{eq:changeji} that
$$x_i(-r)=S_{\pi / \kappa_j(r)} x_j(r);$$ 
see Fig.\ref{fig:surface}. Regarding $x_i$ and $x_j$ as the same solution and denoting by $x$, we obtain from \eqref{eq:lambdaD}--\eqref{eq:xD} that
$$\dot{x}(0)=2\rea (\varphi_0 e^{i\kappa_0 t}), \quad \dot{\lambda}(0)=\eta=\sqrt{-{H_{11}}/{H_{22}}}, \quad \dot{\kappa}(0)=\im \langle D_{x\lambda}^2F(0,\lambda_0)[\varphi_0],\varphi^{\ast}_0\rangle\eta,$$
which completes the proof.
\end{proof}

\section{Proof of Theorem \ref{thm:stable2}}\label{sec:4}

In this section, we prove the stability result --- Theorem \ref{thm:stable2}.
Before proceeding, we also recall some known results and notations from Kielh\"{o}fer \cite[I.12]{Kielhoefer2012} for the reader's convenience.
Our proof will be based on those results and use the same notations.

\subsection{Preliminaries}
Following \cite{Kielhoefer2012}, we denote by $\mu$ the \emph{Floquet exponents}, which are
the eigenvalues of the linearized operator $\frac{\mathrm{d}}{\mathrm{d} t}-{D}_{x} F(x(t), \lambda)$ in the space of $p$-periodic functions.
The stability is determined by the so-called \emph{Principle of Linearized Stability}:
\begin{quote}
The $p$-periodic solution $x=x(t)$ of \eqref{a1} is (linearly) \emph{stable} if the Floquet exponent $\mu=0$ is simple and if all its Floquet exponents $\mu \neq 0$ have positive real parts.
\end{quote}
As pointed out in \cite[p.83]{Kielhoefer2012}, the reason to consider only linear stability here is that we do not know a proof for the present general setting that linear stability indeed implies asymptotic stability.
But for ODEs and many parabolic PDEs, it is true that linear stability  implies asymptotic stability (cf. \cite{Henry1981,Lunardi1995}).

In \cite[I.12]{Kielhoefer2012}, the above principle is applied to the bifurcating curve $\{(x(r), \lambda(r))\}$ of $2 \pi / \kappa(r)$-periodic solutions of \eqref{a1} given by the (nondegenerate) Hopf bifurcation theorem --- Theorem \ref{thm1}. Just as in \eqref{b1}, the substitution $t / \kappa(r)$ for $t$ fixes the period to $2 \pi,$ and the stability problem converts into the study of the eigenvalues $\mu$ of
\begin{equation}\label{eq:eigenvalue}
{D}_{x} G(x(r), \kappa(r), \lambda(r)) \psi\equiv\bigg(\kappa(r) \frac{\mathrm{d}}{\mathrm{d} t}-{D}_{x} F(x(r)(t), \lambda(r))\bigg) \psi=\mu \psi.
\end{equation}
For $r=0,$ it is known in the previous section that the operator $J_{0}\equiv{D}_{x} G\left(0, \kappa_{0}, \lambda_{0}\right)=\kappa_{0} \frac{d}{d t}-A_{0}$ has a geometrically double eigenvalue $0$ with eigenvectors $\hat{v}_{1}, \hat{v}_{2}$. So this principle does not apply for $r=0$, but is proved to hold for $r \neq 0$
under the nondegeneracy (F4) and the smoothness (F1$^{\prime}$)  (cf. \cite[pp.83--84]{Kielhoefer2012}). Thus, the stability result --- Theorem \ref{thm:stable1} is successfully established.

We also use this principle to prove the stability properties for the bifurcating solutions of the degenerate problem in the next subsection. We now recall some known results from \cite[I.12]{Kielhoefer2012}, and notice that those results do not require condition (F4)  and hence hold not only for the bifurcating solutions obtained by Theorem \ref{thm1} but also for those by Theorem \ref{thm2}. 
Similar as in the previous section, $F \in C^{4}(U \times V, Z)$ by (F1${}^{\prime}$) implies that $G \in C^{3}(\tilde{U} \times \tilde{V}, W)$.
 
Since
\begin{equation*}
  \text{$J_{0}\psi=\mu \psi, \psi(0)=\psi(2 \pi) \Leftrightarrow$
$in\kappa_{0}-\mu$ is an eigenvalue of $A_{0}$ for some $n \in \mathbb{Z}$,}
\end{equation*}
which, under condition (F7), implies  that
$\rea  \mu>0$ for all Floquet exponents $\mu \neq 0$ of
$J_{0}$.
Therefore, the (linear) stability of the bifurcating curve $\{(x(r), \lambda(r))\}$ of $2 \pi / \kappa(r)$-periodic solutions of \eqref{a1} is determined by the sign of the real part of the perturbed critical eigenvalues $\mu(r)$ of ${D}_{x} G(x(r), \kappa(r), \lambda(r))$ near $\mu(0)=0,$ at least for small $r \in(-\delta, \delta) .$

 Since the algebraic multiplicity is preserved under a small perturbation, there exist two perturbed eigenvalues $\mu_{1}(r), \mu_{2}(r)$ such that $\mu_{1}(0)=\mu_{2}(0)=0$.
On one hand, according to \cite[(I.12.7) and Proposition I.18.3]{Kielhoefer2012}, the periodic solution $x(r)$ possesses the trivial Floquet exponent $\mu_{1}(r) \equiv 0$ with a curve of eigenfunctions
$\left\{\psi_{1}(r) \mid r \in(-\delta, \delta)\right\} \subset E \cap Y$ that is twice
continuously differentiable such that
\begin{equation}\label{eq:firsteigen}
P \psi_{1}(r)=i(\psi_{0}-\overline{\psi}_{0})\quad \text{and}\quad w_{1}(r) \equiv(I-P) \psi_{1}(r)\:
\hbox{ satisfying }\: w_{1}(0)=0.
\end{equation}
Set the projections (cf. \eqref{eq:vv})
\begin{align*}
&Q_{j} z=\frac{1}{2 \pi} \Big(\int_{0}^{2 \pi}\left\langle z, \hat{v}_{j}^{\prime}\right\rangle d t \Big) \hat{v}_{j}, \quad j=1,2, \; z \in W,\qquad \left.Q_{j}\right|_{E \cap Y}=P_{j}.
\end{align*}
Then by \eqref{eq:Q}, $Q=Q_{1}+Q_{2}$, $Q_{1} Q_{2}=Q_{2} Q_{1}=0,$ both $Q_{1}$ and $Q_{2}$ are real for real $z,$ and $\psi_{1}(0)=\hat{v}_{1}, Q_{2} w_{1}(r)=0$ for all $r \in(-\delta, \delta)$.
On the other hand, there exists a linearly independent (possibly generalized) eigenfunction $\psi_{2}(r)$ with the second perturbed eigenvalue $\mu_{2}(r)$ such that $\mu_{2}(0)=0$. Precisely, according to \cite[Proposition I.12.1]{Kielhoefer2012},
there is a unique twice continuously differentiable curve $\left\{\mu_{2}(r) \mid r \in(-\delta, \delta), \mu_{2}(0)=0\right\}$ in $\mathbb{R}$ such that
\begin{equation}\label{c1}
 {D}_{x} G(x(r), \kappa(r), \lambda(r))[\hat{v}_{2}+w_{2}(r)]=\mu_{2}(r)\left(\hat{v}_{2}+w_{2}(r)\right)+\nu(r) \psi_{1}(r),
\end{equation}
where $\left\{w_{2}(r) \mid r \in(-\delta, \delta), w_{2}(0)=0\right\} \subset(I-P)(E \cap Y)$,
$\{\nu(r) \mid r \in(-\delta, \delta), \nu(0)=0\} \subset \mathbb{R}$ are also twice continuously differentiable, and $ \psi_{1}(r)=\hat v_1+ w_1(r)$.

Therefore, the first Floquet exponent $\mu_1(r)$ is trivial and simple. It follows that
under condition (F7), the stability properties of a periodic solution $x(r)(t)$ are determined by the sign of the second (nontrivial) Floquet exponent $\mu_2(r)$. That is, $x(r)$ is \emph{stable} if $\mu_2(r)>0$ and  \emph{unstable} if $\mu_2(r)<0$.

\subsection{Proofs}
\begin{proof}[\textbf{Proofs of Theorem \ref{thm:stable2} and Corollary \ref{cor:h22}. }]
Differentiating \eqref{c1} with respect to $r$ at $r=0$ gives
\begin{align*}
D_{xx}^2G(0,\kappa_0,\lambda_0)[\hat{v}_2,\hat{v}_2]+\dot{\kappa}(0)D_{x\kappa}^2G(0,\kappa_0,\lambda_0)[\hat{v}_2]
&+\dot{\lambda}(0)D_{x\lambda}^2G(0,\kappa_0,\lambda_0)[\hat{v}_2]+J_0[\dot{w}_2(0)]\\
&=\dot{\mu}_2(0)\hat{v}_2+\dot{\nu}(0)\hat{v}_1.
\end{align*}
That is,
\begin{equation}\label{c1.1}
-D_{xx}^2F(0,\lambda_0)[\hat{v}_2,\hat{v}_2]+\dot{\kappa}(0)\frac{\mathrm{d}\hat{v}_2}{\mathrm{d} t}-\dot{\lambda}(0)D_{x\lambda}^2F(0,\lambda_0)[\hat{v}_2]+J_0[\dot{w}_2(0)]=\dot{\mu}_2(0)\hat{v}_2+\dot{\nu}(0)\hat{v}_1.
\end{equation}
Here and in what follows, we usually use notations $^{\prime}=\frac{\mathrm{d}}{\mathrm{d}\lambda}$
and  $\dot{}=\frac{\mathrm{d}}{\mathrm{d}r}$ for simplicity.

By the relation $\frac{\mathrm{d}\hat{v}_2}{\mathrm{d} t}=\hat{v}_1$ and \eqref{eq:equality}, acting $\frac{1}{2\pi}\int_0^{2\pi}\langle\cdot,\hat v_2^{\ast}\rangle  \,\mathrm{d} t$ on \eqref{c1.1} yields
\begin{equation}
\dot{\mu}_2(0)=-\rea \langle D_{x\lambda}^2F(0,\lambda_0)[\varphi_0],\varphi_0^* \rangle\dot{\lambda}(0), \label{c1.2}
\end{equation}
which, together with the degeneracy (F4$^\prime$) or \eqref{t1},  gives
\begin{equation}
\dot{\mu}_2(0)=0.\label{c1.3}
\end{equation}

We next prove that
\begin{equation}\label{c1.4}
\ddot{\mu}_2(0)=\rea\frac{\mathrm{d}^2}{\mathrm{d} r^2}{D}_{r}\hat{\Phi}(r,\kappa(r),\lambda(r))\big|_{r=0}.
\end{equation}
Note that as is shown below, proving \eqref{c1.4} actually does not require
\eqref{c1.3}, (F4) or (F4$^\prime$).

On one hand, acting $\frac{1}{2\pi}\int_0^{2\pi}\langle\cdot,\hat v_2^\ast\rangle  \mathrm{d} t$ on \eqref{c1} gives
\begin{equation*} 
\mu_2(r)=\frac{1}{2\pi}\int_0^{2\pi}\langle D_xG\big(x(r),\kappa(r),\lambda(r)\big)[\hat v_2+ w_2(r)],\hat{v}_2^\ast\rangle  \mathrm{d} t.
\end{equation*} 
Differentiating $\mu_2$ with respect to $r$ once and twice yield
\begin{align} \nonumber 
\dot{\mu}_2(r)&=\frac{1}{2\pi}\int_0^{2\pi}\langle \frac{\mathrm{d}}{\mathrm{d}r}\big(D_xG\big)[\hat v_2+ w_2(r)]+D_xG[\frac{\mathrm{d}}{\mathrm{d}r} w_2(r)],\hat{v}_2^\ast\rangle  \mathrm{d} t,\\
\ddot{\mu}_2(r)&=\frac{1}{2\pi}\int_0^{2\pi}\langle\frac{\mathrm{d}^2}{\mathrm{d}r^2}\big(D_xG\big)[\hat v_2+ w_2(r)]+2\frac{\mathrm{d}}{\mathrm{d}r}\big(D_xG\big)[\frac{\mathrm{d}}{\mathrm{d}r} w_2(r)]+D_xG[\frac{\mathrm{d}^2}{\mathrm{d}r^2} w_2(r)],\hat v_2^\ast\rangle  \mathrm{d} t.\label{c.14}
\end{align}
On the other hand, let $(x(r),\kappa(r),\lambda(r))$ be the given solution and
write $x(r)=r\hat{v}_2+\psi(r\hat{v}_2,\kappa(r),\lambda(r))$.
Then we obtain from \eqref{eq:vv} and \eqref{eq:psifirst} that
\begin{equation*}
\rea{D}_{r}\hat{\Phi}(r,\kappa(r),\lambda(r))=\frac{1}{2\pi}\int_0^{2\pi}\langle D_xG\big(x(r),\kappa(r),\lambda(r)\big)[\hat v_2+D_v\psi[\hat v_2]],\hat{v}_2^\ast\rangle  \mathrm{d} t.\label{c.9}
\end{equation*}
Differentiating it with respect to $r$ once and twice yield
\begin{align}\nonumber
\rea\frac{\mathrm{d}}{\mathrm{d}r}{D}_{r}\hat{\Phi}(r,\kappa(r),\lambda(r))&=\frac{1}{2\pi}\int_0^{2\pi}\langle \frac{\mathrm{d}}{\mathrm{d}r}\big(D_xG\big)[\hat v_2+D_v\psi[\hat v_2]]+D_xG[\frac{\mathrm{d}}{\mathrm{d}r}D_v\psi[\hat v_2]],\hat{v}_2^\ast\rangle  \mathrm{d} t,\\  
\rea\frac{\mathrm{d}^2}{\mathrm{d}r^2}{D}_{r}\hat{\Phi}(r,\kappa(r),\lambda(r))&=\frac{1}{2\pi}\int_0^{2\pi}\langle\frac{\mathrm{d}^2}{\mathrm{d}r^2}\big(D_xG\big)[\hat v_2+D_v\psi[\hat v_2]]\nonumber\\
&\quad +2\frac{\mathrm{d}}{\mathrm{d}r}\big(D_xG\big)[\frac{\mathrm{d}}{\mathrm{d}r}D_v\psi[\hat v_2]]+D_xG[\frac{\mathrm{d}^2}{\mathrm{d}r^2}D_v\psi[\hat v_2]],\hat v_2^\ast\rangle  \mathrm{d} t. \label{c.15}
\end{align}

Evaluate \eqref{c.14} and \eqref{c.15} at $r=0$  and then compare them. Notice that $D_v\psi(0,\kappa_0,\lambda_0)=0$,  $ w_2(0)=0$, and $\frac{1}{2 \pi} \int_{0}^{2 \pi}\left\langle z, \hat{v}_{2}^{\ast}\right\rangle d t=0$ for any $z\in R(J_0)$  by \eqref{eq:equality}. It follows that in order to prove \eqref{c1.4}, it suffices to show that
\begin{equation}\label{left}
\frac{\mathrm{d}}{\mathrm{d}r} w_2(r)\big|_{r=0}=\frac{\mathrm{d}}{\mathrm{d}r}D_v\psi[\hat v_2]\big|_{r=0}.
\end{equation}
On one hand, taking $v=r\hat v_2$, $\kappa=\kappa(r)$ and $\lambda=\lambda(r)$ in \eqref{b9} and
 differentiating it with respect to $r$ at $r=0$ yield
\begin{equation}
(I-Q)\frac{\mathrm{d}}{\mathrm{d}r}(D_xG)[\hat v_2+D_v\psi[\hat v_2]]+(I-Q)D_xG[\frac{\mathrm{d}}{\mathrm{d}r}D_v\psi[\hat v_2]]=0.\label{c.5}
\end{equation}
On the other hand, differentiating \eqref{c1} with respect to $r$ and taking the projection $I-Q$ yield
\begin{align}\nonumber
(I-Q)\frac{\mathrm{d}}{\mathrm{d}r}(D_xG)[\hat v_2+ w_2(r)]&+(I-Q)D_xG[\frac{\mathrm{d}}{\mathrm{d}r} w_2(r)]\\
&=\frac{\mathrm{d}}{\mathrm{d}r}\mu_2\cdot w_2+\mu_2\frac{\mathrm{d}}{\mathrm{d}r} w_2+\frac{\mathrm{d}}{\mathrm{d}r}\nu\cdot w_1+\nu\frac{\mathrm{d}}{\mathrm{d}r} w_1.\label{c.4}
\end{align}
Evaluating \eqref{c.5} and \eqref{c.4} at $r=0$ and comparing them, we have
\begin{equation}\label{c.5.1}
J_0[\frac{\mathrm{d}}{\mathrm{d}r} w_2(r)]\big|_{r=0}=J_0[\frac{\mathrm{d}}{\mathrm{d}r}D_v\psi[\hat v_2]]\big|_{r=0},
\end{equation}
which implies \eqref{left} because both $\frac{\mathrm{d}}{\mathrm{d}r} w_2(r)$ and $\frac{\mathrm{d}}{\mathrm{d}r}D_v\psi[\hat v_2]$ belong to $(I-P)(E \cap Y)$.
So \eqref{c1.4} holds.

In view of $\dot{\mu}_2(0)=0$, to determine the stability properties of the bifurcating solutions,  we need to further calculate $\ddot{\mu}_2(0)$.
Direct differentiation yields
\begin{align}\nonumber
\frac{\mathrm{d}^2}{\mathrm{d} r^2}{D}_{r}\hat{\Phi}(r,\kappa(r),&\lambda(r))={D}_{rrr}^3\hat{\Phi}(r,\kappa(r),\lambda(r))+2{D}_{rr\kappa}^3\hat{\Phi}(r,\kappa(r),\lambda(r))\dot{\kappa}(r)\\\nonumber
+&2{D}_{rr\lambda}^3\hat{\Phi}(r,\kappa(r),\lambda(r))\dot{\lambda}(r)+{D}_{r\kappa\kappa}^3\hat{\Phi}(r,\kappa(r),\lambda(r))(\dot{\kappa}(r))^2\\\nonumber
+&2{D}_{r\kappa\lambda}^3\hat{\Phi}(r,\kappa(r),\lambda(r))\dot{\kappa}(r)\dot{\lambda}(r)+{D}_{r\kappa}^2\hat{\Phi}(r,\kappa(r),\lambda(r))\ddot{\kappa}(r)\\\label{mu2}
+&{D}_{r\lambda\lambda}^3\hat{\Phi}(r,\kappa(r),\lambda(r))(\dot{\lambda}(r))^2+{D}_{r\lambda}^2\hat{\Phi}(r,\kappa(r),\lambda(r))\ddot{\lambda}(r).
\end{align}
Evaluate this expression at $r=0$ and notice that $\dot{\lambda}(0)=\eta$ and $H_{11}+H_{22}\eta^2=0$ in {Theorem \ref{thm2}}. Then by Lemma \ref{lem:3.1}, we obtian from \eqref{c1.4} and \eqref{mu2}  that
\begin{equation}\label{c3}
\ddot{\mu}_2(0)=3H_{11}+H_{22}\eta^2=2H_{11}.
\end{equation}

We next show that the analogous relations like \eqref{c1.3} and \eqref{c1.4}  also hold for the trivial solution of \eqref{a1}.
Indeed, take $2\pi/\kappa_0$ as the period of the trivial solution $(0,\lambda)$, 
i.e.,  $G(0,\kappa_0,\lambda)\equiv 0$. 
It follows from \eqref{eq:perteigen} that 
$\mu(\lambda)$ and $\varphi(\lambda)$ also satisfy
\begin{equation}
{D}_xF(0,\lambda)\overline{\varphi}(\lambda)=\overline{\mu}(\lambda)\overline{\varphi}(\lambda),\quad \text{with }\overline{\mu}(\lambda_0)=-i\kappa_0\text{ and }\overline{\varphi}(\lambda_0)=\overline{\varphi}_0.
\end{equation}
Then we get
\begin{align}\nonumber
&{D}_xG(0,\kappa_0,\lambda)[\varphi(\lambda)e^{it}+\overline{\varphi}(\lambda)e^{-it}]\\\nonumber
=&(\kappa_0\frac{\mathrm{d}}{ \mathrm{d} t}-D_{x}F (0,\lambda))[\varphi(\lambda)e^{it}+\overline{\varphi}(\lambda)e^{-it}]\\\nonumber
=&i\kappa_0\varphi(\lambda)e^{it}-i\kappa_0\overline{\varphi}(\lambda)e^{-it}-D_{x}F (0,\lambda)\varphi(\lambda)e^{it}-D_{x}F (0,\lambda)\overline{\varphi}(\lambda)e^{-it}\\\nonumber
=&i\kappa_0\varphi(\lambda)e^{it}-i\kappa_0\overline{\varphi}(\lambda)e^{-it}-\mu(\lambda)\varphi(\lambda)e^{it}-\overline{\mu}(\lambda)\overline{\varphi}(\lambda)e^{-it}\\
=&-\rea \mu(\lambda)\left(\varphi(\lambda)e^{it}+\overline{\varphi}(\lambda)e^{-it}\right)+(\kappa_0-\im \mu(\lambda))\left(i\varphi(\lambda)e^{it}-i\overline{\varphi}(\lambda)e^{-it}\right).\label{c17}
\end{align}
By the Implicit Function Theorem (cf. \cite[Proposition I.7.2]{Kielhoefer2012}), we have  $$\varphi(\lambda)=\varphi_0+\varphi_1(\lambda),$$
 where $\varphi_1$ satisfies $\langle \varphi_1(\lambda),\varphi^{\ast}_0\rangle=0$.
Rewrite $\varphi(\lambda)e^{it}+\overline{\varphi}(\lambda)e^{-it}$ and $i\varphi(\lambda)e^{it}-i\overline{\varphi}(\lambda)e^{-it}$ in the form 
\begin{equation}\label{eq:decomphi}
 \varphi(\lambda)e^{it}+\overline{\varphi}(\lambda)e^{-it}=\hat v_2+\hat w_2(\lambda),\quad
i\varphi(\lambda)e^{it}-i\overline{\varphi}(\lambda)e^{-it}=\hat v_1+\hat w_1(\lambda),
\end{equation}
in which $\hat v_1$ and $\hat v_2$ are defined as in \eqref{eq:vv}.
Inserting \eqref{eq:decomphi} into \eqref{c17} yields 
\begin{equation}
{D}_xG(0,\kappa_0,\lambda)[\hat v_2+\hat w_2(\lambda)]=-\rea \mu(\lambda)(\hat v_2+\hat w_2(\lambda))+(\kappa_0-\im \mu(\lambda))(\hat v_1+\hat w_1(\lambda)).\label{c18}
\end{equation}
Since $-\rea(\lambda_0)=\kappa_0-\im\mu(\lambda_0)=0$ and $\hat w_1(\lambda_0)=\hat w_2(\lambda_0)=0$, similar to the proofs of \eqref{c1.3} and \eqref{c1.4}, we obtain from \eqref{c18} that 
\begin{equation}\label{c1.5}
-\rea\mu^{\prime}(\lambda_0)=0,\quad -\rea\mu^{\prime\prime}(\lambda_0)=\rea\frac{\mathrm{d}^2}{\mathrm{d} \lambda^2}{D}_{r}\hat{\Phi}(0,\kappa_0,\lambda)\big|_{\lambda=\lambda_0}.
\end{equation}
Lemma \ref{lem:3.1} implies that
\begin{equation}\label{c4}
-\rea\mu^{\prime\prime}(\lambda_0)=\rea{D}_{r\lambda\lambda}^3\hat{\Phi}(0,\kappa_0,\lambda_0)=H_{22},
\end{equation}
which completes the proof of Corollary \ref{cor:h22}. Notice that this part does not need condition (F7). 

Therefore, it follows from \eqref{c3} and \eqref{c4} that the signs of $H_{11}$ and $H_{22}$ decide the stability properties of the bifurcating solutions and the trivial solution  near $\lambda_0$, respectively.
Under the assumptions of Theorem \ref{thm2}(2),
since $\det H_0=H_{11} H_{22}<0$, if $H_{22}>0$, then $H_{11}<0$ and hence $\mu_2(r)<0$ near $(0,\lambda_0)$, which implies that the bifurcating periodic solution $\{(x(r),\lambda(r))\}$ is unstable. Similarly, if $H_{22}<0$, then  $H_{11}>0$ and the stability becomes reversed.
It also follows that the bifurcating periodic solutions and the trivial solution have reversed stability,
and hence the principle of exchange of stability also holds.
\end{proof}


\section{Applications}\label{sec:4}

Theorems \ref{thm2} and \ref{thm:stable2} have a wide range of applications, especially to multi-parameter problems. For example,
by rechecking some known bifurcation problems but in the range of parameters where the nondegeneracy condition (F4) fails, it is possible to find new or hidden bifurcating branches. We next present an example by revisiting a bifurcation problem of PDE system which had been deeply studied earlier in Yi et al. \cite{Yi2009}.

Consider the following diffusive predator--prey system with Holling type-II nonlinearity
\begin{equation}\label{eq:final}
\begin{cases}
u_t-d_1u_{xx}=u\big(1-\frac{u}{k}\big)-\frac{muv}{1+u},&x\in(0,\ell\pi),\; t>0,\\
v_t-d_2v_{xx}=-\theta v+\frac{muv}{1+u},&x\in(0,\ell\pi),\; t>0,\\
u_x(0,t)=v_x(0,t)=0,\quad u_x(\ell\pi,t)=v_x(\ell\pi,t)=0,&t>0,\\
u(x,0)=u_0(x)\geqslant 0,\quad v(x,0)=v_0(x)\geqslant 0,
\end{cases}
\end{equation}
where $d_1$, $d_2$, $\ell$, $m$, $k$, $\theta\in\mathbb{R}^+$.
It is clear that the system has three constant equilibrium solutions:
$(0,0)$, $(k,0)$ and $(\lambda,v_\lambda)$,
in which $\lambda=\frac{\theta}{m-\theta}>0$ and $v_\lambda=\frac{(k-\lambda)(1+\lambda)}{km}>0$
provided $m > (1+\frac{1}{k})\theta$ (or equivalently, $0<\lambda<k$).

To investigate possible bifurcating solutions from the positive coexistence equilibrium  $(u,v)=(\lambda,v_\lambda)$,  $\lambda$ is treated as a bifurcation parameter.
From the known results of stability (cf. \cite[pp.1954--1955]{Yi2009}), it follows that each potential bifurcation point has to fall in
the interval $(0,\frac{k-1}{2}]$ with $k>1$.
To be convenient, \eqref{eq:final} is translated into the system
\begin{equation}\label{eq:final2}
\begin{cases}
u_t-d_1u_{xx}=(u+\lambda)\big(1-\frac{u+\lambda}{k}\big)-\frac{\theta(\lambda+1)(u+\lambda)(v+v_\lambda)}{\lambda(1+u+\lambda)},&x\in(0,\ell\pi),\; t>0,\\
v_t-d_2v_{xx}=-\theta(v+v_\lambda)+\frac{\theta(\lambda+1)(u+\lambda)(v+v_\lambda)}{\lambda(1+u+\lambda)},&x\in(0,\ell\pi),\; t>0,\\
u_x(0,t)=v_x(0,t)=0,\quad u_x(\ell\pi,t)=v_x(\ell\pi,t)=0,&t>0.
\end{cases}
\end{equation}
Thus, 
it is equivalent to studying Hopf bifurcation of \eqref{eq:final2} from the trivial solution $(u,v)=(0,0)$ for $\lambda\in(0,\frac{k-1}{2}]$ and $k>1$.

Rewrite the equations of \eqref{eq:final2} in the abstract form
\begin{equation}\label{eq:final3}
\frac{\mathrm{d}\mathcal{U}}{\mathrm{d}t}=F(\mathcal{U},\lambda),\qquad \text{where }\; \mathcal{U}=\begin{pmatrix}
u\\v
\end{pmatrix},\;
F(\mathcal{U},\lambda)=\begin{pmatrix}
d_1u_{xx}+f(u,v)\\d_2v_{xx}+g(u,v)
\end{pmatrix},
\end{equation}
with $f(u,v)\!=\!(u+\lambda)(1-\frac{u+\lambda}{k})-\frac{\theta(\lambda+1)(u+\lambda)(v+v_\lambda)}{\lambda(1+u+\lambda)}$ and $g(u,v)\!=\!-\theta(v+v_\lambda)+\frac{\theta(\lambda+1)(u+\lambda)(v+v_\lambda)}{\lambda(1+u+\lambda)}$.
Choose the suitable spaces
\begin{equation}\label{eq:spaces}
X=\left\{(u,v)\in H^2(0,\ell\pi)\times H^2(0,\ell\pi)\big|(u_x,v_x)|_{x=0,\ell\pi}=0\right\},\quad Z=L^2(0,\ell\pi)\times L^2(0,\ell\pi),
\end{equation}
equipped with the standard complex inner product,
and consider the linearized operator at the trivial solution
\begin{equation}\label{eq:operator}
D_\mathcal{U}F(0,\lambda)=
\begin{pmatrix}
d_1\frac{\partial^2}{\partial x^2}+A(\lambda)&-\theta\\
\frac{k-\lambda}{k(1+\lambda)}&d_2\frac{\partial^2}{\partial x^2}
\end{pmatrix}
\quad \hbox{ with } A(\lambda)=\frac{\lambda(k-1-2\lambda)}{k(1+\lambda)}.
\end{equation}
In view of the basis $\{\cos \frac{nx}{\ell}\}_{n\in \mathbb{N}}$ of $L^2(0,\ell\pi)$ and the linear operator $
\begin{pmatrix}
-\frac{d_1 n^2}{\ell^2}+A(\lambda) &-\theta\\
\frac{k-\lambda}{k(1+\lambda)}&-\frac{d_2 n^2}{\ell^2}
\end{pmatrix},$
solving the corresponding characteristic equation
\begin{equation}\label{eq:charaeq}
\beta^{2}-\beta T_{n}(\lambda)+D_{n}(\lambda)=0,\quad n\in \mathbb{N},
\end{equation}
in which $
T_{n}(\lambda)=A(\lambda)-\frac{\left(d_{1}+d_{2}\right) n^{2}}{\ell^{2}}
$ and $D_n(\lambda)=\frac{\theta(k-\lambda)}{k(1+\lambda)}
-A(\lambda)\frac{d_2n^2}{\ell^2}+\frac{d_1d_2n^4}{\ell^4}$,
gives the eigenvalues $\mu_n(\lambda)$ of $D_\mathcal{U}F(0,\lambda)$ over the complexified space $X_{c}$ of $X$ having the form
\begin{equation}\label{eq:eigen}
\mu_n(\lambda)=\alpha_n(\lambda)\pm i\omega_n(\lambda),\quad n\in \mathbb{N},
\end{equation}
with
\begin{equation*} 
\alpha_n(\lambda)=\frac{A(\lambda)}{2}-\frac{(d_1+d_2)n^2}{2\ell^2},\quad
\omega_n(\lambda)=\sqrt{D_n(\lambda)-\alpha^2_n(\lambda)}.
\end{equation*}
Differentiating $\alpha_n$ with respect to $\lambda$ yields
\begin{equation}\label{eq:d7}
\alpha_n^{\prime}(\lambda)=\frac{A^{\prime}(\lambda)}{2}=\frac{k-1-4\lambda-2\lambda^2}{2k(1+\lambda)^2},\quad
\alpha_n^{\prime\prime}(\lambda)=\frac{A^{\prime\prime}(\lambda)}{2}=-\frac{k+1}{k(1+\lambda)^3}.
\end{equation}
Since $\alpha_n^{\prime}$ and $\alpha_n^{\prime\prime}$ are independent of $n$, we omit the subscript $n$ in what follows for simplicity.

 Set
\begin{equation}\label{d7.1}
\begin{split}
    & \lambda_0^H=\frac{k-1}{2},\quad\lambda_* =\sqrt\frac{1+k}{2}-1\in \big(0,\lambda_0^H\big), \quad \ell_n= n\sqrt\frac{d_1+d_2}{M_*},
\end{split}
\end{equation}
with $M_*=A(\lambda_*)=\frac{(\sqrt{k+1}-\sqrt{2})^2}{k}$.
Restricted to $(0,\lambda^H_0]$, then $\lambda_0^H$ and $\lambda_*$ uniquely solve $\alpha_0(\lambda)=0$ and $\alpha^{\prime}(\lambda)=0$, respectively.
In addition, when $\ell=\ell_n$,  $\lambda_*$ uniquely solves $\alpha_n(\lambda)=0$ for each $n\geqslant 1$; when $\ell\in \left(\ell_{n}, \ell_{n+1}\right]$, $\alpha_j(\lambda)$ admits exactly two roots $\lambda_{j, \pm}^{H}(\ell)$ for each $1 \leqslant j \leqslant n$.
It is well known that for each $\ell > 0$, $\lambda_0^H$ is the unique Hopf bifurcation point of the corresponding ODE system (i.e., $d_1=d_2=0$ in \eqref{eq:final}).

Yi et al. \cite[Theorem 2.4]{Yi2009} established the following result of Hopf bifurcation.
\begin{thm}[\cite{Yi2009}] \label{thm:shi}
 Suppose that the constants $d_{1}, d_{2}, \theta>0$, and $k>1$ satisfy
\begin{equation}\label{eq:olddd}
\frac{d_{1}}{d_{2}}>\frac{(\sqrt{k+1}-\sqrt{2})^{4}}{4 \theta k}.
\end{equation}
 Then for any $\ell$ in $\left(\ell_{n}, \ell_{n+1}\right]$, there exist $2n$ points $\lambda_{j, \pm}^{H}(\ell), 1 \leqslant j \leqslant n$, satisfying
$$
0<\lambda_{1,-}^{H}(\ell)<\lambda_{2,-}^{H}(\ell)<\cdots<\lambda_{n,-}^{H}(\ell)<\lambda_{*}
<\lambda_{n,+}^{H}(\ell)<\cdots<\lambda_{2,+}^{H}(\ell)<\lambda_{1,+}^{H}(\ell)<\lambda_{0}^{H},
$$
such that the system \eqref{eq:final} undergoes a Hopf bifurcation at $\lambda=\lambda_{j, \pm}^{H}$ or $\lambda=\lambda_{0}^{H}$, and the bifurcating periodic solutions can be parameterized in the form
\begin{equation}\label{eq:exform}
\left\{\begin{array}{l}
u(s)(x, t)=\lambda+s\left(a_{n} e^{2 \pi i t / T(s)}+\overline{a_{n}} e^{-2 \pi i t / T(s)}\right) \cos \frac{nx}{\ell} +o(s^{2}), \\
v(s)(x, t)=v_\lambda+ s\left(b_{n} e^{2 \pi i t / T(s)}+\overline{b_{n}} e^{-2 \pi i t / T(s)}\right) \cos \frac{nx}{\ell} +o(s^{2}),
\end{array}\right.
\end{equation}
where
\begin{equation*}
T(s)=\frac{2 \pi}{\omega_0}(1+\tau_{2} s^{2})+o(s^{4}), \quad \text{with } \tau_{2}=-\frac{1}{\omega_0}\big[\im (c_{1}(\lambda_{0}))-\frac{ \omega^{\prime}(\lambda_{0})}{\alpha^{\prime}(\lambda_{0})}\rea (c_{1}(\lambda_{0}))\big]
\end{equation*}
(here, $c_1(\lambda)$ is coefficient of $z\bar{z}$ in the Poincar\'{e} normal form, $\omega_0=\omega_{j}(\lambda_0)$, and $\lambda_0=\lambda_{j, \pm}^{H}$ or $\lambda_{0}^{H}$).
Moreover,
\begin{enumerate}
  \item[(1)]
  The bifurcating periodic solutions from $\lambda=\lambda_{0}^{H}$ are spatially homogeneous, which coincides with the periodic solution of the corresponding ODE system;
  \item[(2)]
  The bifurcating periodic solutions from $\lambda=\lambda_{j, \pm}^{H}$ are spatially non-homogeneous.
\end{enumerate}
\end{thm}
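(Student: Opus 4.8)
The plan is to derive Theorem~\ref{thm:shi} from the classical Hopf bifurcation theorem (Theorem~\ref{thm1}) after a careful spectral analysis of the linearization \eqref{eq:operator}. First I would use the Neumann eigenbasis $\{\cos(nx/\ell)\}_{n\in\mathbb{N}}$ to decouple $D_\mathcal{U}F(0,\lambda)$ into the sequence of $2\times2$ matrices displayed before \eqref{eq:charaeq}, reducing the spectrum to the roots of the characteristic equations \eqref{eq:charaeq} and hence to the eigenvalues $\mu_n(\lambda)=\alpha_n(\lambda)\pm i\omega_n(\lambda)$ of \eqref{eq:eigen}. A mode $n$ can produce a Hopf bifurcation only at a $\lambda_0$ where $\alpha_n(\lambda_0)=0$ and $\omega_n(\lambda_0)>0$, i.e. where the conjugate pair sits on the imaginary axis and is genuinely complex.

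Next I would locate the candidates by solving $\alpha_n(\lambda)=0$, equivalently $A(\lambda)=(d_1+d_2)n^2/\ell^2$. Since $A(\lambda)=\frac{\lambda(k-1-2\lambda)}{k(1+\lambda)}$ vanishes at $\lambda=0$ and at $\lambda=\lambda_0^H=(k-1)/2$, stays positive in between, and attains its maximum $M_*$ exactly at $\lambda_*$, the horizontal line at height $(d_1+d_2)j^2/\ell^2$ meets the graph of $A$ in precisely two points whenever that height lies below $M_*$, that is whenever $\ell>\ell_j$. For $\ell\in(\ell_n,\ell_{n+1}]$ this occurs for each $1\leqslant j\leqslant n$, producing the pairs $\lambda_{j,\pm}^H(\ell)$, while mode $0$ supplies $\lambda_0^H$; the strict ordering asserted in the theorem then follows from the monotonicity of $A$ on either side of $\lambda_*$.

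I would then verify the hypotheses of Theorem~\ref{thm1} at each of these $2n+1$ points. Smoothness of $F$ and the generation of a compact analytic semigroup, i.e. (F1) and (F6), are standard for this parabolic system on the spaces \eqref{eq:spaces}, while algebraic simplicity (F3) and nonresonance (F5) follow because distinct modes yield distinct $2\times2$ blocks. At a point with $\alpha_n=0$ one has $\omega_n^2=D_n$, so the eigenvalues form a genuine conjugate pair precisely when $D_n>0$; this is exactly where the diffusion-ratio hypothesis \eqref{eq:olddd} enters, guaranteeing $\omega_n>0$ at every $\lambda_{j,\pm}^H$ and at $\lambda_0^H$. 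The transversality (F4) reduces to $\alpha'(\lambda_0)\neq0$, and from \eqref{eq:d7} one has $\alpha'(\lambda)=\tfrac12A'(\lambda)$, which vanishes only at $\lambda_*$; since $\ell>\ell_n\geqslant\ell_j$ forces each $\lambda_{j,\pm}^H$ to be strictly separated from $\lambda_*$ for $1\leqslant j\leqslant n$, transversality holds. Applying Theorem~\ref{thm1} then delivers the bifurcating branches, and the spatial profile $\cos(nx/\ell)$ of the critical eigenfunction immediately yields assertions (1) and (2).

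The remaining content, namely the explicit series \eqref{eq:exform} and the period expansion with coefficient $c_1(\lambda_0)$, comes from the standard normal-form reduction on the two-dimensional critical eigenspace spanned by $e^{\pm i\omega_0 t}\cos(nx/\ell)$. I expect the main obstacle to be twofold: establishing $D_n>0$ sharply under \eqref{eq:olddd}, which amounts to estimating $D_n(\lambda_{j,\pm}^H)$ against $M_*$ and $\theta$, and carrying out the cubic normal-form computation that produces $c_1(\lambda_0)$. The latter is lengthy but routine; the real care lies in confirming that the complex-pair condition and the transversality condition hold simultaneously at all $2n+1$ candidate points, which is precisely what \eqref{eq:olddd} and the exclusion of $\lambda_*$ secure.
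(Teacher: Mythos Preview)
The paper does not provide its own proof of Theorem~\ref{thm:shi}: it is quoted verbatim as \cite[Theorem~2.4]{Yi2009} and used as background for the new degenerate result, Theorem~\ref{thm:final}. Your outline is entirely consistent with the standard argument one would expect in that reference --- Neumann-mode decoupling into the $2\times2$ blocks, locating the candidates via $A(\lambda)=(d_1+d_2)j^2/\ell^2$, checking (F3)--(F6) with \eqref{eq:olddd} ensuring $D_j>0$, and invoking Theorem~\ref{thm1} together with a center-manifold/normal-form computation to obtain $c_1(\lambda_0)$ and the expansion \eqref{eq:exform} --- so there is nothing to compare against in the present paper itself.
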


The interesting work of Yi et al. \cite{Yi2009} had shown the complex spatiotemporal dynamics and attracted considerable attention.
However for problem \eqref{eq:final}, it is still unclear whether a Hopf bifurcation occurs at the point $\lambda=\lambda_*$ for some $\ell_n$ or not. Since $\alpha^{\prime}(\lambda_*)= 0$, Theorem \ref{thm1} no longer applies to this case.
By applying Theorem \ref{thm2}, we next give an answer to this question.
Actually, we prove that a degenerate Hopf bifurcation is able to occur at $\lambda_*$ when $\ell=\ell_{n}$ for each $n\geqslant 1$.

For problem  \eqref{eq:final3}, consider $H_{11}$ defined by \eqref{eq:HH}$_1$, in which taking $x=\mathcal{U}$, $\lambda_0=\lambda_*$ and $\ell=\ell_n$. Then
\begin{equation}\label{eq:H11}
\begin{split}
H_{11}=&\rea \big\langle-F_{\mathcal{UUU}}(0,\lambda_*)[\varphi_0,\varphi_0,\overline{\varphi}_0]
+2F_{\mathcal{UU}}(0,\lambda_*)\big[\varphi_0,A_0^{-1}F_{\mathcal{UU}}(0,\lambda_*)[\varphi_0,\overline{\varphi}_0]\big]\\
& \qquad\qquad-F_{\mathcal{UU}}(0,\lambda_*)\big[\overline{\varphi}_0,(2i\omega_0 I-A_0)^{-1}F_{\mathcal{UU}}(0,\lambda_*)[\varphi_0,\varphi_0]\big], \varphi_0^*\big\rangle.\\
\end{split}
\end{equation}
where
\begin{equation}\label{eq:notations}
\begin{split}
A_0&=D_\mathcal{U}F(0,\lambda_*)=
\begin{pmatrix}
d_1\frac{\partial^2}{\partial x^2}+M_*&-\theta\\
\frac{\sqrt{2(k+1)}-1}{k}&d_2\frac{\partial^2}{\partial x^2}\end{pmatrix},\\
\varphi_0&=\begin{pmatrix}
1\\\frac{d_2 n^2}{\theta \ell^2_n}-i\frac{ \omega_0}{\theta}
\end{pmatrix}\cos\frac{nx}{\ell_n}, \quad \varphi^{*}_0=\frac{1}{\ell_n\pi}\begin{pmatrix}
1+i\frac{d_2n^2}{\omega_0\ell^2_n}\\
-i\frac{\theta}{\omega_0}
\end{pmatrix}\cos\frac{nx}{\ell_n},\\
\omega_0&=\omega_n(\lambda_*)=\bigg[\frac{\theta(\sqrt{2(k+1)}-1)}{k}
-\frac{d_2^2M_*^2}{(d_1+d_2)^2}\bigg]^{1/2}.
\end{split}
\end{equation}
$H_{11}$ is actually independent of $n$ (to be proved later).

The next theorem is our main result in this section and it provides a supplement for Theorem \ref{thm:shi}.
\begin{thm}\label{thm:final}
 Suppose that the constants $d_{1}, d_{2}, \theta>0$, and $k>1$ satisfy
\begin{equation}\label{eq:newdd}
\frac{d_{1}}{d_{2}}>\frac{(\sqrt{k+1}-\sqrt{2})^{4}}{4 \theta k (\sqrt{2(k+1)}-1)}.
\end{equation}
Suppose that $H_{11}\neq 0$. Then for any $\ell=\ell_{n}$ $(n\geqslant 1)$, the following assertions hold:
\begin{enumerate}
  \item[(i)] If $H_{11}>0$, then no Hopf bifurcation occurs at $\lambda=\lambda_*$, i.e.,
  the set of periodic solutions of \eqref{eq:final} near $\lambda=\lambda_*$ consists of only the constant equilibrium solution $(u,v)=(\lambda,v_\lambda)$.
  \item[(ii)] If $H_{11}<0$, then system \eqref{eq:final} exhibits a (degenerate) Hopf bifurcation at $\lambda=\lambda_*$, i.e.,
apart from the constant equilibrium solution $(\lambda,v_\lambda)$,
the set of the periodic solutions of \eqref{eq:final} near $\lambda=\lambda_*$ contains a continuously differentiable curve $\{(u(r),v(r))^T, \lambda(r))\}$ of $2 \pi / \omega(r)$-period through $((u(0),v(0))^T, \lambda(0))=\left((\lambda_*,v_{\lambda_*})^T, \lambda_*\right)$ with $\omega(0)=\omega_0$,  satisfying
\begin{equation}\label{eq:tangent2}
\big((\dot{u}(0),\dot{v}(0))^T,\dot{\lambda}(0),\dot{\omega}(0)\big)=\big(2\rea(\varphi_0 e^{i\omega_0 t}),\eta,-\frac{\theta}{k\omega_0}\eta\big),
\end{equation}
with $\eta=2^{-\frac{3}{4}}k^{\frac{1}{2}}(k+1)^{\frac{1}{4}}(-H_{11})^{\frac{1}{2}}$.
Every other periodic solution of \eqref{eq:final} in a neighborhood of $\left((\lambda_*,v_{\lambda_*})^T, \lambda_*\right)$ is obtained from $\{(u(r),v(r))^T, \lambda(r))\}$ by a phase shift
$$S_{\theta} (u(r), v(r))^T=(u(r)(t+\theta), v(r)(t+\theta))^T.$$
\end{enumerate}
\end{thm}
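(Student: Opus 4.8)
The plan is to obtain Theorem \ref{thm:final} as a direct application of the abstract Theorem \ref{thm2} together with Corollary \ref{cor:h22}, so the real work is to verify that the translated system \eqref{eq:final3} satisfies (F1$^{\prime}$)(F2)(F3)(F4$^{\prime}$)(F5)(F6) at $(\mathcal{U},\lambda)=(0,\lambda_*)$ when $\ell=\ell_n$, and then to read off the two cases from the sign of $\det H_0=H_{11}H_{22}$. First I would dispatch the structural hypotheses: (F2) holds because $(\lambda,v_\lambda)$ is an equilibrium of \eqref{eq:final}, so $F(0,\lambda)\equiv 0$; (F1$^{\prime}$) holds since $f$ and $g$ are smooth where the denominator $1+u+\lambda$ stays positive; and (F6) holds because $A_0$ is a diagonal Neumann Laplacian plus a bounded zeroth-order matrix on the bounded interval, generating an analytic semigroup with compact resolvent. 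For (F3) and (F5) I would exploit the mode decomposition: since $\ell_n^2=n^2(d_1+d_2)/M_*$, one has $\alpha_j(\lambda_*)=\tfrac{M_*}{2}(1-j^2/n^2)$, which vanishes \emph{only} at $j=n$. Hence the single mode $\cos(nx/\ell_n)$ produces the purely imaginary pair $\pm i\omega_0$, forcing algebraic simplicity (F3) and nonresonance (F5) for all $m\neq\pm1$; the case $m=0$ is the invertibility of $A_0$, i.e. $D_j(\lambda_*)\neq0$ for every $j$, which comes from positivity of the determinants at the coexistence state.

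Condition \eqref{eq:newdd} enters only to guarantee that $\pm i\omega_0$ are genuine nonzero eigenvalues, i.e. $\omega_0^2>0$. Writing $y=(\sqrt{k+1}-\sqrt2)^2/\sqrt{k\theta(\sqrt{2(k+1)}-1)}$, the condition $\omega_0^2>0$ computed from \eqref{eq:notations} is equivalent to $d_1/d_2>y-1$, whereas \eqref{eq:newdd} reads $d_1/d_2>y^2/4$; the elementary inequality $y^2/4-(y-1)=(y-2)^2/4\geq0$ then shows \eqref{eq:newdd} is a clean sufficient condition for $\omega_0^2>0$ (and explains why it is weaker than the nondegenerate condition \eqref{eq:olddd} of Yi et al.).

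Next, (F4$^{\prime}$) is precisely the degeneracy $\rea\mu^{\prime}(\lambda_*)=\alpha^{\prime}(\lambda_*)=\tfrac12 A^{\prime}(\lambda_*)=0$ that \emph{defines} $\lambda_*$ and at which Theorem \ref{thm:shi} no longer applies. Corollary \ref{cor:h22} converts the abstract quantity into $H_{22}=-\rea\mu^{\prime\prime}(\lambda_*)=-\alpha^{\prime\prime}(\lambda_*)=\frac{k+1}{k(1+\lambda_*)^3}$, and since $1+\lambda_*=((k+1)/2)^{1/2}$ this reduces to $H_{22}=2^{3/2}/(k(k+1)^{1/2})>0$. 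Thus $H_{22}$ is always positive, $\det H_0$ has the sign of $H_{11}$, and Theorem \ref{thm2} gives the dichotomy immediately: if $H_{11}>0$ then $H_0$ is definite and part (1) leaves only the constant branch, which is (i); if $H_{11}<0$ then $H_0$ is indefinite and part (2) produces the bifurcating curve, which is (ii). For the tangent data in (ii), the amplitude direction $\dot x(0)=2\rea(\varphi_0e^{i\omega_0t})$ is immediate; substituting $H_{22}$ into $\eta=\sqrt{-H_{11}/H_{22}}$ yields $\eta=2^{-3/4}k^{1/2}(k+1)^{1/4}(-H_{11})^{1/2}$; and using \eqref{eq:equivalent} to identify $\im\langle D^2_{\mathcal{U}\lambda}F(0,\lambda_*)[\varphi_0],\varphi_0^*\rangle=\im\mu^{\prime}(\lambda_*)=\omega^{\prime}(\lambda_*)$, I would differentiate $\omega_n^2=D_n-\alpha_n^2$ at $\lambda_*$ (where $\alpha_n=\alpha_n^{\prime}=0$) to get $\omega^{\prime}(\lambda_*)=D_n^{\prime}(\lambda_*)/(2\omega_0)$; since $A^{\prime}(\lambda_*)=0$ kills all but the first term of $D_n$, one finds $D_n^{\prime}(\lambda_*)=-2\theta/k$ and hence $\dot\omega(0)=\omega^{\prime}(\lambda_*)\eta=-\frac{\theta}{k\omega_0}\eta$, matching \eqref{eq:tangent2}. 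The phase-shift description is inherited verbatim from Theorem \ref{thm2}(2).

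The hardest part will be the auxiliary claim that $H_{11}$ in \eqref{eq:H11} is independent of $n$, the only genuinely computational step. Here I would note that \eqref{eq:H11} requires only the action of $A_0^{-1}$ and $(2i\omega_0I-A_0)^{-1}$ on the spatial frequencies arising in the products $\varphi_0\overline\varphi_0$ and $\varphi_0\varphi_0$, namely the constant mode and $\cos(2nx/\ell_n)$; because $n^2/\ell_n^2=M_*/(d_1+d_2)$ and $(2n)^2/\ell_n^2=4M_*/(d_1+d_2)$ are $n$-independent, these resolvents reduce to fixed $2\times2$ matrices and the $L^2$-pairings against $\varphi_0^*$ give $n$-independent constants. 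Alongside this bookkeeping, the other delicate verification is the eigenvalue accounting for (F3)/(F5) and the invertibility of $A_0$; once these are settled the remaining assertions are routine substitutions into the abstract formulas.
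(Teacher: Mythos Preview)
Your strategy is the paper's strategy: verify (F1$^{\prime}$), (F2), (F3), (F4$^{\prime}$), (F5), (F6) for the translated system at $(\mathcal U,\lambda)=(0,\lambda_*)$ with $\ell=\ell_n$, compute $H_{22}=-\alpha''(\lambda_*)>0$ via Corollary~\ref{cor:h22}, and read off the dichotomy from Theorem~\ref{thm2}. The argument that $H_{11}$ is $n$-independent and your computation of $\dot\omega(0)$ via $\im\mu'(\lambda_*)=\omega'(\lambda_*)$ and $D_n'(\lambda_*)=-2\theta/k$ are both correct; the paper instead evaluates the matrix $D^2_{\mathcal U\lambda}F(0,\lambda_*)=\bigl(\begin{smallmatrix}0&0\\-2/k&0\end{smallmatrix}\bigr)$ directly and pairs with $\varphi_0^*$, but the two routes give the same $-\theta/(k\omega_0)$.

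There is one genuine slip in how you use hypothesis~\eqref{eq:newdd}. You assert that it ``enters only to guarantee $\omega_0^2>0$'' and separately dispose of $A_0$'s invertibility by appealing to ``positivity of the determinants at the coexistence state''. That last phrase is exactly the statement $D_j(\lambda_*)>0$ for every $j$, and it is \emph{not} automatic: writing $D_j(\lambda_*)=a-bs+cs^2$ with $s=j^2/\ell_n^2$, $a=\theta(\sqrt{2(k+1)}-1)/k$, $b=M_*d_2$, $c=d_1d_2$, the minimum over $s>0$ is $a-b^2/(4c)$, and requiring this to be positive is precisely~\eqref{eq:newdd}. So~\eqref{eq:newdd} is what secures $D_j(\lambda_*)>0$ for all $j$, hence both the invertibility of $A_0$ (the $m=0$ case of (F5)) and the well-definedness of $A_0^{-1}$ in $H_{11}$; the fact $\omega_0^2=D_n(\lambda_*)>0$ is then just the special case $j=n$. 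Your inequality with $y$ is correct but proves only this special case, so you should replace the circular ``positivity of the determinants'' remark by the quadratic minimization above.
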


\begin{rem} (a) In Theorem \ref{thm:shi}, the Hopf bifurcations obtained at $\lambda_{j, \pm}^{H}$ and $\lambda_0^H$ are
subcritical or supercritical. In contrast, the Hopf bifurcation at $\lambda_*$ obtained by Theorem \ref{thm:final}
is transcritical because of \eqref{eq:tangent2}.\\
 (b) Except for $\lambda^H_0$,  bifurcating solutions from the other bifurcation points $\lambda_0\in (0,\lambda^H_0)$ (i.e., $\lambda_0=\lambda_{j, \pm}^{H}$ or $\lambda_*$) are always unstable. In fact,
 for each $\ell>0$ and  $1\leqslant j\leqslant n$,
  $$\alpha_0(\lambda_0)=\frac{A(\lambda_0)}{2}
  >\frac{A(\lambda_0)}{2}-\frac{(d_1+d_2)j^2}{2\ell^2}=\alpha_j(\lambda_0),
  $$
   which implies that in view of \eqref{eq:eigen}, there are always eigenvalues of $A_{0}$ in the right complex half-plane, because $\alpha_0(\lambda_0)>0$, $\alpha_j(\lambda_{j,\pm}^H)=0$, and $\alpha_n(\lambda_*)=0$.
\end{rem}

\begin{proof}[\textbf{Proof of Theorem \ref{thm:final}.}]
Equivalently, consider Hopf bifurcation for system \eqref{eq:final3} from the trivial solution $(u,v)=(0,0)$ at $\lambda=\lambda_*\in (0,\lambda^H_0)$.
By \eqref{eq:final3}--\eqref{eq:operator}, it is clear that (F1$^{\prime}$) and (F2) hold.

When $\ell=\ell_n$,
according to the notations in \eqref{eq:notations},
a direct computing implies that $N(i\omega_0I-A_0)=\spa\{\varphi_0\}$ and
 $N(i\omega_0I-A_0)\oplus R(i\omega_0I-A_0)=X$.
Thus $ i \omega_0$  is a simply eigenvalue of $A_0$ and is of Fredhlom of index zero. So is $-i \omega_0$ with $\overline{\varphi}_0$ instead of $\varphi_0$, and hence (F3) is satisfied.

Since $\alpha^{\prime}(\lambda_{*}) = 0$, (F4$^{\prime}$) holds.
Notice that for each $n\geqslant 1$,  $\lambda_*$ solves $\alpha_n(\lambda)=0$ if and only if $\ell=\ell_n$.
Taking $\lambda=\lambda_*$ and $\ell=\ell_n$ in \eqref{eq:charaeq} imply that
$T_{n}(\lambda_*)=2\alpha_n(\lambda_*)=0$ and $ T_{j}(\lambda_*) \neq 0$ for $j \neq n$. Moreover, since \eqref{eq:newdd} holds, it follows that
$$ D_j(\lambda_*)=\frac{\theta(\sqrt{2(k+1)}-1)}{k}
-\frac{(\sqrt{k+1}-\sqrt{2})^2}{ k}\frac{d_2j^2}{\ell_n^2}+\frac{d_1d_2j^4}{\ell_n^4}>0\quad \text{for all } j>0.$$
So $\pm i\omega_0$ is the unique pair of complex eigenvalues on the imaginary axis and hence (F5) holds.
Condition (F6), i.e. generating of the analytic semigroup, follows from  \cite[pp.224--233]{Hassard1981}.

Next, we claim that $H_{11}$ is independent of $n$.
In fact, since $\frac{\ell_n}{n}= \sqrt\frac{d_1+d_2}{M_*}$,
it follows from \eqref{eq:notations} that $\varphi_0$ does not depend on $n$ but $\varphi^{*}_0$ contains a factor $\frac{1}{\ell_n\pi}$.
However, in view of \eqref{eq:H11} and \eqref{eq:notations}, we have
\begin{align}\nonumber
H_{11}&=\rea\left\langle (\cdots),\varphi^{\ast}_0\right\rangle
= \rea\int^{\ell_n\pi}_0 (\cdots) \overline{\varphi^{*}_0}\,\mathrm{d}x\\\label{eq:h11cos}
&=\frac{1}{\ell_n\pi}\rea\int^{\ell_n\pi}_0 (\cdots)\cos^2\frac{n x}{\ell_n}+(\cdots)\cos^2\frac{n x}{\ell_n} \cos \frac{2n x}{\ell_n}+(\cdots)\cos^4\frac{n x}{\ell_n}\,\mathrm{d}x,
\end{align}
where the terms in $(\cdots)$ are independent of $n$ and $x$.
Since
$$\frac{1}{\ell_n\pi}\int^{\ell_n\pi}_0 \cos^2\frac{n x}{\ell_n}\,\mathrm{d}x=\frac{1}{2}, \quad \frac{1}{\ell_n\pi}\int^{\ell_n\pi}_0 \cos^2\frac{n x}{\ell_n}\cos\frac{2n x}{\ell_n}\,\mathrm{d}x=\frac{1}{4},\quad \frac{1}{\ell_n\pi}\int^{\ell_n\pi}_0 \cos^4\frac{n x}{\ell_n}\,\mathrm{d}x=\frac{3}{8},$$
it follows from \eqref{eq:h11cos} that the claim is true.

By Corallary \ref{cor:h22}, we obtain from \eqref{eq:d7} and \eqref{d7.1} that
\begin{equation*} 
H_{22}=-\alpha''(\lambda_*)=\frac{2\sqrt{2}}{k\sqrt{k+1}}> 0.
\end{equation*}
Since $H_{11}\neq 0$, it follows that \\
(1) If $H_{11}> 0,$ then  $\det H_0=H_{11}H_{22}>0$ and hence by Theorem \ref{thm2}(1) the set of periodic solutions of \eqref{eq:final2} near $\lambda=\lambda_*$ is only the trivial solution line $((0,0),\lambda)$, which is corresponding to the constant equilibrium solution $(u,v)=(\lambda,v_\lambda)$ of \eqref{eq:final} for each $\lambda$.\\
(2) If $H_{11}< 0,$ then $\det H_0<0$ and hence by Theorem \ref{thm2}(2) a degenerate Hopf bifurcation happens at $\lambda=\lambda_*$ from the trivial solution line, with the tangent vector given by \eqref{eq:tangent1}.
By \eqref{eq:operator} and \eqref{d7.1},
a direct computation implies $
D_{\mathcal{U}\lambda}^2F(0,\lambda_*)=\begin{pmatrix}
0&0\\
-\frac{2}{k}&0
\end{pmatrix}$.
It follows from  \eqref{eq:tangent1} and \eqref{eq:notations} that
$$\dot{\omega}(0)=\im \left\langle D_{\mathcal{U}\lambda}^2F(0,\lambda_*)[\varphi_0],\varphi^{\ast}_0\right\rangle\eta
=-\frac{\theta}{k\omega_0}\eta.$$
So \eqref{eq:tangent2} holds.
\end{proof}

At final, we conclude this section with an example.

\begin{example}
We now use Theorem \ref{thm:final} to analyze two specific cases of the multi-parameter problem \eqref{eq:final}.
\begin{enumerate}[(1)]
\item  Recall an example that was studied earlier in \cite[Example 2.6]{Yi2009} --- the case: $d_1=1$, $d_2=3$, $k=17$, $\theta=4$ in \eqref{eq:final}.
    It was known that $\lambda_{*}=2, M_{*}=8 / 17, \ell_{n}=\sqrt{17/2} n $, and \eqref{eq:olddd} holds.
    By Theorem \ref{thm:shi}, it was shown in \cite{Yi2009} that for $\ell=2 \sqrt{119} / 7\in (\ell_{1}, \ell_{2}] $, the set of Hopf bifurcation points $\Lambda_{1}=\left\{\lambda_{1,-}^{H}, \lambda_{1,+}^{H}, \lambda_{0}^{H}\right\}=\{1,7/2,8\}$.
        We now revisit this example in the view of Theorem \ref{thm:final}.
     It is clear that \eqref{eq:newdd} holds since it is weaker than \eqref{eq:olddd}.
     By a lengthy computation, we obtain from \eqref{eq:H11} and \eqref{eq:notations} that $H_{11}(\approx 0.14597)>0$.
     Then Theorem \ref{thm:final} indicates that
     no Hopf bifurcation happens at $\lambda_*$ as
      $\ell=\ell_n$ for every $n\in\mathbb{N}_+$, which provides a supplement to the known result for the case of $\ell=\ell_n$ and $\lambda=\lambda_*$ (cf. Theorem \ref{thm:shi}).

\item Consider a fast-slow diffusive case: $d_1=48$, $d_2=1$, $k=127$, $\theta=7$, According to \eqref{d7.1}, we have $\lambda_{*}=7$, $M_{*}={98}/{127}$, $ \ell_{n}=\sqrt{127/2}n$, then \eqref{eq:olddd} and \eqref{eq:newdd} are satisfied.
    By a lengthy and tedious computation, we get that $H_{11}(\approx -0.00025)<0$.
    So Theorem \ref{thm:final} indicates that $\lambda_{*}$ is a Hopf bifurcation point as $\ell=\ell_n$ for each $n\in\mathbb{N_+}$.
    Thus, we find a hidden bifurcation point, which was unknown in the earlier work.
    In contrast, Theorem
    \ref{thm:shi} provides bifurcating information for the case of $\lambda=\lambda_{j, \pm}^{H}$ or $\lambda_{0}^{H}$ but not for $\lambda=\lambda_*$.
\end{enumerate}
\end{example}

\section*{Acknowledgments}

The first and second authors were supported in part by Guangdong Basic and Applied Basic Research Foundation of China (2022A1515011867) and (2020A1515011148), respectively, which are gratefully acknowledged.


\bibliographystyle{plainnat} 
\bibliography{hopf}

\begin{thebibliography}{25}
\providecommand{\natexlab}[1]{#1}
\providecommand{\url}[1]{\texttt{#1}}
\expandafter\ifx\csname urlstyle\endcsname\relax
  \providecommand{\doi}[1]{doi: #1}\else
  \providecommand{\doi}{doi: \begingroup \urlstyle{rm}\Url}\fi

\bibitem[Chafee(1968)]{Chafee1968}
N.~Chafee.
\newblock The bifurcation of one or more closed orbits from an equilibrium
  point of an autonomous differential system.
\newblock \emph{J. Differential Equations}, 4:\penalty0 661--679, 1968.

\bibitem[Chang(1993)]{Chang1993}
K.-C. Chang.
\newblock \emph{Infinite-dimensional {M}orse theory and multiple solution
  problems}.
\newblock Birkh\"{a}user Boston, Inc., Boston, MA, 1993.

\bibitem[Crandall and Rabinowitz(1977{\natexlab{a}})]{Crandall1977}
M.~G. Crandall and P.~H. Rabinowitz.
\newblock The {H}opf bifurcation theorem in infinite dimensions.
\newblock \emph{Arch. Ration. Mech. Anal.}, 67\penalty0 (1):\penalty0 53--72,
  1977{\natexlab{a}}.

\bibitem[Crandall and Rabinowitz(1977{\natexlab{b}})]{Crandall1977a}
M.~G Crandall and P.~H. Rabinowitz.
\newblock The principle of exchange of stability.
\newblock In \emph{Dynamical systems ({P}roc. {I}nternat. {S}ympos., {U}niv.
  {F}lorida, {G}ainesville, {F}la., 1976)}, pages 27--41, 1977{\natexlab{b}}.

\bibitem[Fife(1974)]{Fife1974}
P.~C. Fife.
\newblock Branching phenomena in fluid dynamics and chemical reaction-diffusion
  theory.
\newblock In \emph{Eigenvalues of non-linear problems ({C}entro {I}nternaz.
  {M}at. {E}stivo ({C}.{I}.{M}.{E}), {III} {C}iclo, {V}arenna, 1974)}, pages
  23--83. 1974.

\bibitem[Golubitsky and Schaeffer(1985)]{Golubitsky1985}
M.~Golubitsky and D.~G. Schaeffer.
\newblock \emph{Singularities and groups in bifurcation theory. {V}ol. {I}}.
\newblock Springer-Verlag, New York, 1985.

\bibitem[Hassard et~al.(1981)Hassard, Kazarinoff, and Wan]{Hassard1981}
B.~D. Hassard, N.~D. Kazarinoff, and Y.-H. Wan.
\newblock \emph{Theory and applications of {H}opf bifurcation}.
\newblock Cambridge University Press, Cambridge-New York, 1981.

\bibitem[Henry(1981)]{Henry1981}
D.~Henry.
\newblock \emph{Geometric theory of semilinear parabolic equations}, volume 840
  of \emph{Lecture Notes in Mathematics}.
\newblock Springer-Verlag, Berlin-New York, 1981.

\bibitem[Iooss(1972)]{Iooss1972}
G.~Iooss.
\newblock Existence et stabilit\'{e} de la solution p\'{e}riodique secondaire
  intervenant dans les probl\`emes d'evolution du type {N}avier-{S}tokes.
\newblock \emph{Arch. Rational Mech. Anal.}, 47:\penalty0 301--329, 1972.

\bibitem[Iudovich(1971)]{Iudovich1971}
V.~I. Iudovich.
\newblock The onset of auto-oscillations in a fluid.
\newblock \emph{Prikl. Mat. Meh.}, 35:\penalty0 638--655, 1971.

\bibitem[Joseph and Sattinger(1972)]{Joseph1972}
D.~Joseph and D.~Sattinger.
\newblock Bifurcating time periodic solutions and their stability.
\newblock \emph{Arch. Ration. Mech. Anal.}, 45:\penalty0 79--109, 1972.

\bibitem[Kielh\"{o}fer(1979{\natexlab{a}})]{Kielhoefer1979}
H.~Kielh\"{o}fer.
\newblock Generalized {H}opf bifurcation in {H}ilbert space.
\newblock \emph{Math. Methods Appl. Sci.}, 1\penalty0 (4):\penalty0 498--513,
  1979{\natexlab{a}}.

\bibitem[Kielh\"{o}fer(1979{\natexlab{b}})]{Kielhoefer1979a}
H.~Kielh\"{o}fer.
\newblock Hopf bifurcation at multiple eigenvalues.
\newblock \emph{Arch. Rational Mech. Anal.}, 69\penalty0 (1):\penalty0 53--83,
  1979{\natexlab{b}}.

\bibitem[Kielh\"{o}fer(1982)]{Kielhoefer1982}
H.~Kielh\"{o}fer.
\newblock Floquet exponents of bifurcating periodic orbits.
\newblock \emph{Nonlinear Anal.}, 6\penalty0 (6):\penalty0 571--583, 1982.

\bibitem[Kielh\"{o}fer(2012)]{Kielhoefer2012}
H.~Kielh\"{o}fer.
\newblock \emph{Bifurcation theory. An introduction with applications to
  partial differential equations}.
\newblock Springer, New York, second edition, 2012.

\bibitem[Kuiper(1972)]{Kuiper1972}
N.~H. Kuiper.
\newblock {$C^{1}$}-equivalence of functions near isolated critical points.
\newblock In \emph{Symposium on {I}nfinite-{D}imensional {T}opology
  ({L}ouisiana {S}tate {U}niv., {B}aton {R}ouge, {L}a., 1967)}, pages 199--218.
  1972.

\bibitem[Liu et~al.(2007)Liu, Shi, and Wang]{Liu2007}
P.~Liu, J.~Shi, and Y.~Wang.
\newblock Imperfect transcritical and pitchfork bifurcations.
\newblock \emph{J. Funct. Anal.}, 251\penalty0 (2):\penalty0 573--600, 2007.

\bibitem[Liu et~al.(2013)Liu, Shi, and Wang]{Liu2013a}
P.~Liu, J.~Shi, and Y.~Wang.
\newblock Bifurcation from a degenerate simple eigenvalue.
\newblock \emph{J. Funct. Anal.}, 264\penalty0 (10):\penalty0 2269--2299, 2013.

\bibitem[Lunardi(1995)]{Lunardi1995}
A.~Lunardi.
\newblock \emph{Analytic semigroups and optimal regularity in parabolic
  problems}.
\newblock Birkh\"{a}user Verlag, Basel, 1995.

\bibitem[Marsden and McCracken(1976)]{Marsden1976}
J.~E. Marsden and M.~McCracken.
\newblock \emph{The {H}opf bifurcation and its applications}.
\newblock Springer-Verlag, New York, 1976.

\bibitem[Nirenberg(2001)]{Nirenberg2001}
L.~Nirenberg.
\newblock \emph{Topics in nonlinear functional analysis}.
\newblock A.M.S., Providence, RI, 2001.
\newblock Revised reprint of the 1974 original.

\bibitem[Ruelle and Takens(1971)]{Ruelle1971}
D.~Ruelle and F.~Takens.
\newblock On the nature of turbulence.
\newblock \emph{Comm. Math. Phys.}, 20:\penalty0 167--192, 1971.

\bibitem[Sattinger(1971)]{Sattinger1971}
D.~Sattinger.
\newblock Bifurcation of periodic solutions of the {N}avier-{S}tokes equations.
\newblock \emph{Arch. Rational Mech. Anal.}, 41:\penalty0 66--80, 1971.

\bibitem[Schmidt(1978)]{Schmidt1978}
D.~S. Schmidt.
\newblock Hopf's bifurcation theorem and the center theorem of {L}iapunov with
  resonance cases.
\newblock \emph{J. Math. Anal. Appl.}, 63\penalty0 (2):\penalty0 354--370,
  1978.

\bibitem[Yi et~al.(2009)Yi, Wei, and Shi]{Yi2009}
F.~Yi, J.~Wei, and J.~Shi.
\newblock Bifurcation and spatiotemporal patterns in a homogeneous diffusive
  predator-prey system.
\newblock \emph{J. Differential Equations}, 246\penalty0 (5):\penalty0
  1944--1977, 2009.

\end{thebibliography}
\end{document}